\documentclass[11pt,reqno]{amsart}
\usepackage[margin=1in]{geometry}
\usepackage[fleqn,tbtags]{mathtools}
\usepackage[shortlabels]{enumitem}
\usepackage{tikz-cd}
\usepackage{mlmodern}
\usepackage{graphicx}
\usepackage[labelformat=simple]{subcaption}
\usepackage{amssymb}
\usepackage{eucal}
\usepackage{adjustbox}
\usepackage{natbib}

\usepackage{hyperref,xcolor}
\hypersetup{
    colorlinks,
    linkcolor={red!50!black},
    citecolor={blue!50!black},
    urlcolor={blue!80!black}
}

\let\oh=\circ
\newcommand{\ccirc}{\mathbin{\mathchoice
  {\xcirc\scriptstyle}
  {\xcirc\scriptstyle}
  {\xcirc\scriptscriptstyle}
  {\xcirc\scriptscriptstyle}
}}
\newcommand{\xcirc}[1]{\vcenter{\hbox{$#1\oh$}}}
\let\circ\ccirc

\newcommand\restr[2]{{
  \left.\kern-\nulldelimiterspace 
  #1 
  \vphantom{\big|} 
  \right|_{#2} 
  }}

\theoremstyle{definition}
\newtheorem{theorem}{Theorem}[section]
\newtheorem{definition}[theorem]{Definition}
\newtheorem{lemma}[theorem]{Lemma}
\newtheorem{corollary}[theorem]{Corollary}
\newtheorem{proposition}[theorem]{Proposition}

\numberwithin{equation}{section}

\DeclareMathOperator{\diag}{diag}
\DeclareMathOperator{\im}{im}
\DeclareMathOperator{\GL}{\mathsf{GL}}
\DeclareMathOperator{\Sp}{\mathsf{Sp}}
\DeclareMathOperator{\stab}{stab}
\DeclareMathOperator{\Iso}{\mathsf{Iso}}
\let \O \undefined
\DeclareMathOperator{\O}{\mathsf{O}}
\DeclareMathOperator{\U}{\mathsf{U}}

\DeclareMathOperator{\E}{\mathbb{E}}

\let \Re \undefined
\DeclareMathOperator{\Re}{Re}

\DeclareMathOperator{\rank}{rank}
\DeclareMathOperator{\tr}{tr}
\DeclareMathOperator{\Gr}{Gr}
\DeclareMathOperator{\St}{V}
\DeclareMathOperator{\End}{End}
\DeclareMathOperator{\codim}{codim}
\DeclareMathOperator{\Hom}{Hom}

\newcommand{\fb}{{\scriptscriptstyle\mathsf{F}}}

\newcommand{\C}{\mathbb{C}}
\newcommand{\F}{\mathbb{F}}
\newcommand{\B}{\mathcal{B}}
\newcommand{\R}{\mathbb{R}}
\newcommand{\K}{\mathbb{K}}
\newcommand{\V}{\mathbb{V}}
\newcommand{\W}{\mathbb{W}}
\newcommand{\Z}{\mathbb{Z}}
\newcommand{\G}{\mathsf{G}}
\let \H \undefined
\newcommand{\H}{\mathbb{H}}

\title[Indefinite Grassmannian]{The Grassmannian of indefinite subspaces}
\author[L.-H.~Lim]{Lek-Heng~Lim}
\author[R.~Wang]{Rongbiao Thomas Wang}
\author[H.~Yang]{Hongquan Yang}
\address{Computational and Applied Mathematics, University of Chicago, Chicago, IL 60637}
\email{lekheng, rbwang, yanghq@uchicago.edu}

\begin{document}
\begin{abstract}
The homogeneous space $\O_{m,n}(\R)/(\O_{p,q}(\R) \times \O_{m-p,n-q}(\R))$ is an object that has received scant attention, with just two brief mentions in existing literature, and christened the indefinite Grassmannian in one of them. In this article, we develop some of its basic properties, building it from ground up. We will see that, aside from its homogeneous space description, the indefinite Grassmannian may be characterized in several other ways: set-theoretically, it is the manifold of indefinite $(p+q)$-dimensional subspaces in $(m +n)$-dimensional space; it is an adjoint orbit of a Lie group; a semialgebraic smooth manifold of matrices; a base space of a principal bundle whose total space is the indefinite Stiefel manifold, a natural corresponding notion. It may also be naturally equipped with various structures, turning the indefinite Grassmannian into a pseudo-Riemannian manifold; a Einstein manifold; a symplectic manifold; and a pseudo-K\"ahler manifold (last two only over $\C$).  As a centerpiece of this article, we establish two attributes of the indefinite Grassmannian in relation to the standard Grassmannian: (i) any Grassmannian has a Whitney stratification whose highest dimensional strata are indefinite Grassmannians; (ii) the indefinite Grassmannian is a strong deformation retract of a product of two Grassmannians, thereby allowing us to completely ascertain the topology of the former. We will also determine some of the indefinite Grassmannian's features that are within reach --- both geometric (Riemann, Ricci, sectional, and scalar curvatures; second fundamental form) and topological (cohomology ring, homotopy groups, characteristic classes).
\end{abstract}

\maketitle

\section{Introduction}\label{sec:intro}

The Grassmannian is an object with numerous important variations: Lagrangian Grassmannian \cite{Lagrangian}, symplectic Grassmannian \cite{symplectic}, oriented Grassmannian \cite[Chapter~15]{Milnor}, affine Grassmannian \cite{affine}, positive Grassmannian \cite{positive}, and more, each playing important roles in multiple areas of mathematics and physics. In this article we examine yet another variation that, to the best of our knowledge, has been largely overlooked: the indefinite Grassmannian. As an abstract manifold, this is the set of all indefinite $k$-planes in $n$-space --- just as the Lagrangian, symplectic, oriented, affine, and positive Grassmannians comprise respectively Lagrangian, symplectic, oriented, affine, and real $k$-planes with positive Pl\"ucker coordinates in $n$-space.

We encountered this natural object in the course of our recent work \cite[Table~5]{man-rep} where we classified all $\G$-manifolds with a faithful representation; to our surprise, we found only two cursory mentions in existing literature: \cite[Example~4.4.2]{Indef} and \cite[p.~4260]{indef2}, and only in the form of its homogeneous space description. We will see that the indefinite Grassmannian takes many other forms. It is however not a special case of  a generalized Grassmannian in the sense of a quotient of a semisimple Lie group by a parabolic subgroup \cite{flag}, nor can it be realized as an affine Grassmannian in the sense\footnote{Note that this is distinct from the sense used in the first paragraph.} of a flag variety for the loop group \cite{Zhu}; its properties cannot be deduced from these more general objects. 

The main goal of our article is to show that the indefinite Grassmannian possesses rich geometry  and topology similar to its other cousins. 
We will develop the object from ground up, in parallel with standard treatments of the Grassmannian, defining it as an abstract manifold comprising indefinite subspaces of a fixed dimension in a fixed ambient vector space.  We will show that this is indeed equivalent to the homogeneous space characterization in \cite{Indef, indef2}. In addition, we will see that it may also be characterized as an adjoint orbit of a Lie group, and as a smooth semialgebraic manifold of matrices, and can be defined over $\R$, $\C$, $\H$ alike. The definition of an indefinite Grassmannian immediately leads us to an indefinite Stiefel manifold --- the latter is a principal bundle over the former. Over $\R$, $\C$, $\H$, the indefinite Grassmannian has a natural pseudo-Riemannian metric; and over $\C$, it also has a symplectic form and a complex structure that are compatible with the pseudo-Riemannian metric in a way that makes the complex indefinite Grassmannian a pseudo-K\"ahler manifold.  With respect to this metric, we compute its intrinsic curvatures --- Riemann, Ricci, sectional, scalar, and traceless Ricci; the last of which is identically zero, showing that the indefinite Grassmannian is an Einstein manifold. We will also provide the second fundamental form of its natural embedding as a manifold of matrices.

In our opinion, the most striking property of an indefinite Grassmannian is its relation with respect to a standard Grassmannian: Just as a (standard) Grassmannian  $\Gr_k(\mathbb{V})$ has a well-known CW decomposition into Schubert cells, we will show that it also has a Whitney stratification whose highest dimensional strata are the indefinite Grassmannians. The former is algebraic geometric in nature, Schubert cells being locally closed subvarieties of $\Gr_k(\mathbb{V})$; the latter is differential geometric in nature, indefinite Grassmannians being open submanifolds of $\Gr_k(\mathbb{V})$. They are also entirely distinct; neither can be obtained from the other. The lower-dimensional strata in this Whitney stratification may be viewed as a type of ``degenerate Grassmannians'' and appear interesting in their own right; we will study their normal bundles as embedded submanifolds of  $\Gr_k(\mathbb{V})$.

Topologically, we will see that the indefinite Grassmannian is a strong deformation retract of a product of two standard Grassmannians, whose dimensions are determined by the indefinite form that defines the indefinite Grassmannian. This allows us to completely determine the topology of the indefinite Grassmannian from the signature of its indefinite form, allowing us to deduce the cohomology, homotopy, and characteristic classes of the indefinite Grassmannian from those of the Grassmannian.

\section{Notations}\label{sec:note}

In this article, we consider $d$-dimensional vector spaces $\V$ over $\F \coloneqq \R$, $\C$, or $\H$. When $\F = \H$, we assume that $\V$ is a right $\H$-vector space. If $\V$ is equipped with the Euclidean inner product, then $\V \cong \F^d$ and the group of isometries of $\V$ is isomorphic to 
\[
 \Iso(\V) \cong \Iso_d(\F) \coloneqq \{Q \in \F^{d \times d}: Q^* Q =I\}  = \begin{cases}
    \O_d &\text{if } \F = \R,\\
    \U_d &\text{if } \F = \C,\\
    \Sp_d &\text{if } \F = \H.
\end{cases}
\]
We write $e_i \in \F^d$ for its $i$th elementary basis vector, $i=1,\dots,d$. We write $P_\W \in \F^{d \times d}$ for the orthogonal projector onto $\W \subseteq \F^d$ represented in the standard basis. We use the term `projector' to mean `projection matrices.'

\subsection{Grassmannian} Let $k<d$. We regard the Grassmannian of $k$-dimensional subspaces ($k$-planes for short) in $\V$ as an abstract manifold defined set theoretically by
\[
\Gr_k(\V) = \{\W \subseteq \V: \dim_\F \W = k\}.
\]
It has a well-known characterization as a homogeneous space
\[
\Gr_k(\V) \cong \Iso_d(\F)/\bigl( \Iso_k(\F) \times \Iso_{d-k}(\F)\bigr).
\]
Less well-known is its representation as a submanifold of $\F^{d \times d}$ \cite{LLY2020,LY2024,man-rep}: For any $\lambda  \ne \mu  \in \R$,
\begin{equation}\label{eq:Griv}
\Gr_k(\V) \cong \{Q\diag(\lambda I_k,\mu I_{d-k})Q^{-1} \in \F^{d \times d} :Q \in \Iso_d(\F)\} \eqqcolon \Gr_k^{\lambda ,\mu }(\F^d).
\end{equation}
This is an adjoint orbit of $\Iso_d(\F)$ but for specific values of $\lambda $ and $\mu $, it often has a simple algebraic description. For example, with $(\lambda ,\mu ) = (1,0)$, we get its other well-known representation \cite[Example~1.2.22]{Nic} as a set of orthogonal projectors,
\begin{equation}\label{eq:Grpr}
\Gr_k(\V) \cong \Gr_k^{1,0}(\F^d)  = \{P \in \F^{d \times d}: P = P^2 = P^*,\, \tr(P) = k\} \eqqcolon \Gr_k^\pi (\F^d) .
\end{equation}

\subsection{Indefinite linear algebra}\label{sec:ila}

To define an indefinite Grassmannian, we need to equip $\V$ with an indefinite Hermitian form $\omega : \V \times \V \to \R$. Recall that the signature of $\omega$ is the triple $\mathsf{s}(\omega) = (m,n,r)$ where
\begin{align*}
m &\coloneqq \max \{ \dim_\F \W : \W \subseteq \V, \, \omega(v,v) > 0 \text{ for all nonzero } v \in \W  \},\\
n &\coloneqq \max \{ \dim_\F \W : \W \subseteq \V, \, \omega(v,v) < 0 \text{ for all nonzero } v \in \W \},\\
r &\coloneqq \dim \{v \in \V: \omega(v,w) = 0 \text{ for all } w \in \V\};
\end{align*}
and that an isometry between $(\V_1,\omega_1)$ and $(\V_2, \omega_2)$ is a linear isomorphism $Q: \V_1 \to \V_2$ such that $\omega_1(x,y) = \omega_2(Qx, Qy)$ for all $x$, $y \in \V_1$. 

Let $\mathsf{H}^2(\V)$ denote the space of Hermitian operators on an $\F$-vector space $\V$ with respect to the Euclidean inner product. We also denote the inertia of $H \in \mathsf{H}^2(\V)$ by $\mathsf{s}(H) = (m,n,r)$, recalling that here $m$, $n$, $r$ are respectively the number of positive, negative, and zero eigenvalues counted with multiplicities. This slight abuse notation is inconsequential because of the following. We write $\mathsf{H}^2(\F^d)$ for the set of $d \times d$ Hermitian matrices over $\F$ and
\[
 \mathsf{H}^2(\F^d; B) \coloneqq \{ X \in \F^{d \times d} : BX = X^* B \}
\]
for the set of $B$-Hermitian matrices for a $B \in \mathsf{H}^2(\F^d)$.

For a basis $\mathcal{B} = \{v_1, \dots, v_d\}$ of $\V$, the Gram matrix $B_\omega \in \F^{d \times d}$ of $\omega$ is the matrix with $(i,j)$th entry $\omega(v_i,v_j)$, $i,j = 1,\dots,d$. Clearly,
\[
\omega(x,y) = [\overline{x}_1,\dots,\overline{x}_d] B_\omega \begin{bmatrix}
    y_1 \\ \vdots \\y_d
\end{bmatrix} \quad \text{if } x = \sum_{i=1}^d v_i x_i, \quad y = \sum_{i=1}^d v_i y_i.
\]
The map $\omega \mapsto B_\omega$ gives a bijection from the set of indefinite forms over $\V$ to  $\mathsf{H}^2(\F^d)$. The Sylvester's law of inertia  guarantees that the signature of a Hermitian form is the inertia of its Gram matrix, irrespective of the choice of basis. For easy reference, and given that the quaternionic case \cite[Theorem~4.1.6]{quaternion} is relatively obscure, we state it \cite[Theorem~4.5.8]{HornJohnson2012} formally below.
\begin{theorem}[Sylvester's law of inertia]
If $H \in \mathsf{H}^2(\F^d)$, then $\mathsf{s}(H) = \mathsf{s}(SHS^*)$ for all $S \in \GL_d(\F)$ and $\mathsf{s}(H) = (m,n,r)$ if and only if $H = S \diag(I_m,-I_n,0)S^*$ for some $S \in \GL_d(\F)$. Thus  $\mathsf{s}(\omega) = \mathsf{s}(B_\omega) $ for an Hermitian form $\omega$,  irrespective of the choice of basis.
\end{theorem}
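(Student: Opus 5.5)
We prove the existence part first, for $\F=\R,\C,\H$ simultaneously, using only the spectral theorem for Hermitian matrices over $\F$. Given $H \in \mathsf{H}^2(\F^d)$, there is a $Q \in \Iso_d(\F)$ with $H = Q\Lambda Q^*$, where $\Lambda=\diag(\lambda_1,\dots,\lambda_d)$ is real. Over $\H$ this is the quaternionic spectral theorem; its eigenvalues are real because $H$ is Hermitian. After a permutation, which we absorb into $Q$, we may assume $\lambda_1,\dots,\lambda_m>0$, $\lambda_{m+1},\dots,\lambda_{m+n}<0$, and the remaining $r$ eigenvalues are zero. Put $D=\diag(|\lambda_1|^{1/2},\dots,|\lambda_{m+n}|^{1/2},1,\dots,1)$; it is real, so it commutes with scalars even over $\H$. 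Then $\Lambda = D\diag(I_m,-I_n,0)D$, and $S\coloneqq QD\in\GL_d(\F)$ gives $H=S\diag(I_m,-I_n,0)S^*$, with $(m,n,r)=\mathsf{s}(H)$.

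The core is the invariance $\mathsf{s}(H)=\mathsf{s}(SHS^*)$. We use a variational characterization that does not mention eigenvalues: for $H\in\mathsf{H}^2(\F^d)$, the number $m$ of positive eigenvalues equals the maximal dimension of a (right) subspace $\W\subseteq\F^d$ on which $x^*Hx>0$ for all nonzero $x\in\W$. To show this, write $H=Q\Lambda Q^*$. The span of the eigenvectors for positive eigenvalues is such a $\W$, so the maximum is at least $m$. Conversely, if $\dim\W>m$, then $\W$ meets the $(d-m)$-dimensional span of the eigenvectors for nonpositive eigenvalues nontrivially. This uses the dimension formula for right $\H$-subspaces, which holds verbatim. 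On that intersection $x^*Hx\le 0$, a contradiction. The same argument applied to $-H$ characterizes $n$, and $r=\dim\ker H = d-\rank H$.

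Now take $S\in\GL_d(\F)$ and set $H'=SHS^*$. The map $\W\mapsto S^{-*}\W$ is a dimension-preserving bijection of subspaces. Since $(S^{-*}x)^*H'(S^{-*}x)=x^*Hx$, it carries $H$-positive subspaces to $H'$-positive subspaces, so $m$ is preserved, and likewise $n$. Also $\rank H'=\rank H$ because $S$ is invertible, so $r$ is preserved. The converse direction of the "if and only if" follows immediately: if $H=S\diag(I_m,-I_n,0)S^*$, then by invariance $\mathsf{s}(H)=\mathsf{s}(\diag(I_m,-I_n,0))=(m,n,r)$.

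For the final assertion, a change of basis $\mathcal{B}\to\mathcal{B}'$ with transition matrix $T$ transforms the Gram matrix as $B'_\omega=T^*B_\omega T$. This is of the form $SB_\omega S^*$ with $S=T^*$, so $\mathsf{s}(B_\omega)$ does not depend on the basis. Moreover, the definition of $\mathsf{s}(\omega)$ via maximal positive and negative subspaces is exactly the variational characterization above, applied in coordinates. The radical of $\omega$ corresponds to $\ker B_\omega$. Hence $\mathsf{s}(\omega)=\mathsf{s}(B_\omega)$.

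The main obstacle is the quaternionic case. There we must make sure that the spectral theorem with real eigenvalues and the dimension counting for right subspaces hold, and that the expressions $x^*Hx$ are real. Rather than reprove these, we would cite the quaternionic theory \cite[Theorem~4.1.6]{quaternion}. Alternatively, we could embed $\H^d\hookrightarrow\C^{2d}$ via the standard complex representation, under which inertia simply doubles.
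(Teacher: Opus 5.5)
Your proof is correct. The paper does not prove this statement. It quotes it from Horn--Johnson for $\R$ and $\C$, and from a separate quaternionic reference for $\H$.

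Your argument is the standard self-contained one, in two steps:
\begin{enumerate}
\item The spectral theorem gives the normal form $S\diag(I_m,-I_n,0)S^*$. The real diagonal rescaling $D$ commutes with quaternionic scalars, so this works over $\H$ as well.
\item Congruence invariance follows from a qualitative Courant--Fischer characterization: the number of positive eigenvalues equals the maximal dimension of an $H$-positive right subspace. The bijection $\W\mapsto S^{-*}\W$ then carries $H$-positive subspaces to $SHS^*$-positive ones.
\end{enumerate}

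Compared with the paper's citations, your route has two advantages. First, it treats $\R$, $\C$, $\H$ uniformly. The only quaternion-specific inputs are the spectral theorem (real eigenvalues, unitary diagonalization), the dimension formula for right subspaces, and the reality of $x^*Hx$.

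Second, your variational characterization is literally the paper's definition of $\mathsf{s}(\omega)$ via maximal positive and negative subspaces. So the closing assertion $\mathsf{s}(\omega)=\mathsf{s}(B_\omega)$ becomes an actual consequence rather than something the paper's ``Thus'' leaves implicit. Your checks that $B'_\omega=T^*B_\omega T$ under a change of basis and that the radical of $\omega$ corresponds to $\ker B_\omega$ finish this cleanly. The citation route buys only brevity.
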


In the rest of this article, we will assume that $\omega$ is nondegenerate, i.e., $r=0$ in its signature, and set
\[
I_{m,n} \coloneqq \diag(I_m,-I_n), \qquad I_{m,n}^{p,q} = I_{p,q} \oplus I_{m-p,n-q}
\]
for any $p=0,\dots, m$ and $q = 0,\dots, n$. Note also that we use both $A \oplus B$ and $\diag(A, B)$ for block diagonal matrices.
By Sylvester's law of inertia, without loss of generality, we may assume  (i) the standard basis on $\F^d$; (ii) $\omega$ takes the form
\[
\omega_{m,n}(x,y) \coloneqq \overline{x}_1y_1 + \dots + \overline{x}_my_m - \overline{x}_{m+1}y_{m+1}-\dots-\overline{x}_dy_d;
\]
and (iii) $B_{\omega} = I_{m,n}$. With this, $\mathsf{H}^2(\F^d; \omega_{m,n}) = \mathsf{H}^2(\F^d)$.

For a nondegenerate $\omega$, the orthogonal complement of a subspace $\W$ with respect to $\omega$ is 
\[
\W^{\perp_\omega} \coloneqq \{v \in \V: \omega(v,w) = 0 \text{ for all } w \in \W\};
\]
and the group of isometries of $(\V,\omega)$ is
\[
\Iso_{\omega}(\V) \cong \Iso_{m,n}(\F) \coloneqq \{Q \in \F^{d \times d}: Q^* I_{m,n} Q = I_{m,n}\}=\begin{cases}
    \O_{m,n} &\text{if } \F = \R,\\
    \U_{m,n}  &\text{if } \F = \C,\\
    \Sp_{m,n}  &\text{if } \F = \H.
\end{cases} 
\]
In any basis, up to an inner automorphism,
\[
\Iso(\F; B_\omega) \coloneqq \{Q \in \F^{d \times d}: Q^* B_\omega Q = B_{\omega}\} \cong \Iso_{m,n}(\F).
\]

For $X\in\Hom_{\F}(\W,\W^{\perp_\omega})$, its $\omega$-adjoint is the unique
$X^\dagger \in\Hom_{\F}(\W^{\perp_\omega},\W)$ satisfying
\[
    \omega(Xu,v)=\omega(u,X^\dagger v)
    \qquad \text{for all } u\in \W, \; v\in \W^{\perp_\omega}.
\]
In a fixed $\omega$-orthonormal basis, $X^\dagger =I_{p,q}X^*I_{m-p,n-q}$. 

\section{Indefinite Grassmannian}\label{sec:basic}

In the rest of the article, we assume that
\[
p \leq m, \quad q \leq n, \quad d \coloneqq m+n, \quad k \coloneqq p+q.
\]
We begin by defining the indefinite Grassmannian as an abstract manifold.

\begin{definition}\label{def:indef}
Let $\V$ be a $d$-dimensional $\F$-vector space with a nondegenerate indefinite form $\omega$ where $\mathsf{s}(\omega) = (m,n,0)$. The \emph{indefinite Grassmannian} of signature $(p,q)$ is the set of all $k$-dimensional subspaces $\W$ with $\mathsf{s}(\omega \vert_\W) = (p,q,0)$, i.e.,
\[
\Gr_{p,q}(\V) \coloneqq \{\W \subseteq \V: \dim_\F(\W) = k,\, \mathsf{s}(\omega \vert_\W) = (p,q,0)\}.
\]
\end{definition}

While Definition~\ref{def:indef} appears to depend on a choice of $\omega$, it is only dependent on its signature, justifying the notation:
\begin{proposition}\label{prop:indp}
If $\omega_1$ and $\omega_2$ are two indefinite forms over $\V$ with $\mathsf{s}(\omega_1) = \mathsf{s}(\omega_2) = (m,n,0)$, then there exists an isometry $Q:(\V,\omega_1) \to (\V,\omega_2)$ bijectively mapping $\Gr_{p,q}(\V, \omega_1)$ to $\Gr_{p,q}(\V,\omega_2)$.
\end{proposition}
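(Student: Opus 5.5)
We plan to build the isometry explicitly from Sylvester's law of inertia and then check that an isometry preserves the signature of restrictions. First, fix a basis $\mathcal{B}$ of $\V$ and let $B_1 \coloneqq B_{\omega_1}$ and $B_2 \coloneqq B_{\omega_2}$ be the Gram matrices. Both lie in $\mathsf{H}^2(\F^d)$ and have inertia $(m,n,0)$. By Sylvester's law of inertia there are $S_1, S_2 \in \GL_d(\F)$ with $B_i = S_i I_{m,n} S_i^*$. Equivalently, there exist bases $\mathcal{B}_1,\mathcal{B}_2$ of $\V$ that are $\omega_1$-orthonormal and $\omega_2$-orthonormal respectively, with Gram matrix $I_{m,n}$ in each case. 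We then define $Q:\V\to\V$ as the unique linear map sending the $i$th vector of $\mathcal{B}_1$ to the $i$th vector of $\mathcal{B}_2$.

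Since coordinates with respect to $\mathcal{B}_1$ of $x$ equal those of $Qx$ with respect to $\mathcal{B}_2$, we get
\[
\omega_2(Qx,Qy) = \overline{x}^{\mathsf T} I_{m,n}\, y = \omega_1(x,y).
\]
Here $\overline{x}^{\mathsf T}$ is understood with the ordering appropriate to the right $\H$-module convention in the quaternionic case. So $Q$ is an isometry $(\V,\omega_1)\to(\V,\omega_2)$. In matrix terms one may take $Q = S_2^{-*}\cdots$, but we will avoid tracking the explicit formula.

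Next we show that $Q$ maps $\Gr_{p,q}(\V,\omega_1)$ into $\Gr_{p,q}(\V,\omega_2)$. Let $\W$ be a $k$-dimensional subspace. Since $Q$ is a linear isomorphism, $Q\W$ is also $k$-dimensional, and $Q|_\W:\W\to Q\W$ is an isometry from $(\W,\omega_1|_\W)$ to $(Q\W,\omega_2|_{Q\W})$. The signature triple $(m,n,r)$ was defined through maxima of dimensions of positive-definite and negative-definite subspaces and the dimension of the radical. Each of these is carried bijectively between the two spaces by $Q|_\W$, because $\omega_2(Qv,Qv)=\omega_1(v,v)$ and $\omega_2(Qv,Qw)=\omega_1(v,w)$. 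Hence $\mathsf{s}(\omega_2|_{Q\W}) = \mathsf{s}(\omega_1|_\W)$. Alternatively, the Gram matrix of $\omega_1|_\W$ in a basis of $\W$ equals that of $\omega_2|_{Q\W}$ in the image basis, so Sylvester's law gives the same conclusion.

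Finally, $Q^{-1}$ is an isometry $(\V,\omega_2)\to(\V,\omega_1)$, so the same argument shows that $Q^{-1}$ maps $\Gr_{p,q}(\V,\omega_2)$ into $\Gr_{p,q}(\V,\omega_1)$. The induced map $\W\mapsto Q\W$ is therefore a bijection. No step is a real obstacle. The only point needing care is the quaternionic case: there we must use right scalar multiplication consistently so that the Gram matrix computation and the coordinate identity hold. This is where we would invoke the quaternionic version of Sylvester's law cited above.
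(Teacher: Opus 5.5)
Your proof is correct and follows the paper's argument: Sylvester's law of inertia supplies bases $\mathcal{B}_1$, $\mathcal{B}_2$ in which $\omega_1$ and $\omega_2$ both have Gram matrix $I_{m,n}$, and $Q$ is the change-of-basis map between them. You also spell out two points the paper leaves implicit: an isometry preserves the signature of restrictions to subspaces, and $Q^{-1}$ gives the inverse map. The matrix formula you left unfinished is $Q = S_2^{-*}S_1^{*}$ in the coordinates of $\mathcal{B}$, since then $Q^*B_2Q = B_1$.
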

\begin{proof}
By Sylvester's law of inertia, there exist bases $\mathcal{B}_1$ and $\mathcal{B}_2$ such that $B_{\omega_1} = B_{\omega_2} = I_{m,n}$ in the respective basis. Let $Q$ be the change-of-basis matrix taking $\mathcal{B}_1$ to $\mathcal{B}_2$.
\end{proof}

Another consequence of Proposition~\ref{prop:indp} is that we may restrict our attention to
\[
\Gr_{p,q}(\F^{m+n}) \coloneqq \Gr_{p,q}(\F^{m+n}, \omega_{m,n})
\]
without any loss of generality; and we shall do so in the rest of this article. We next show that the indefinite Grassmannian is a submanifold of the standard Grassmannian, and is a complex submanifold when both are complex manifolds.

\begin{proposition}[Indefinite Grassmannian as open submanifold]\label{prop:open}
Over $\F$, $\Gr_{p,q}(\F^{m+n})$ is an open smooth real submanifold of $\Gr_k(\F^d)$. In addition, $\Gr_{p,q}(\C^{m+n})$ is a complex open submanifold of $\Gr_k(\C^d)$. In particular,
\begin{equation}\label{eq:dim}
    \dim_\R \Gr_{p,q}(\F^{m+n}) = \dim_\R \Gr_k (\F^d)  = k(d-k) \cdot \dim_\R \F.
\end{equation}
\end{proposition}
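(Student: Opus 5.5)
The plan is to show that $\Gr_{p,q}(\F^{m+n})$ is an open subset of $\Gr_k(\F^d)$. Any open subset of a smooth (resp. complex) manifold is automatically an open smooth (resp. complex) submanifold of the same dimension, so the remaining claims will follow at once. The dimension formula \eqref{eq:dim} then reduces to the standard fact $\dim_\R \Gr_k(\F^d) = k(d-k)\dim_\R \F$. This is visible from the usual affine charts $\F^{k\times(d-k)}$, and also from the homogeneous description $\Iso_d(\F)/(\Iso_k(\F)\times\Iso_{d-k}(\F))$. We also need $\Gr_{p,q}(\F^{m+n})$ to be nonempty, so that the dimension is meaningful. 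The span of $e_1,\dots,e_p,e_{m+1},\dots,e_{m+q}$ lies in it, since $p\le m$ and $q\le n$.

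For openness I will work in the projector model \eqref{eq:Grpr}, or equivalently in local charts. Fix $\W_0\in\Gr_k(\F^d)$ and choose a matrix $Y_0\in\F^{d\times k}$ whose columns form a basis of $\W_0$. In the standard affine chart around $\W_0$, nearby subspaces are exactly the column spaces of $Y = Y_0 + Y_0^{\perp}Z$, where $Z\in\F^{(d-k)\times k}$ is small and $Y_0^\perp$ spans a fixed complement. In this chart $Y$ depends continuously (indeed affinely) on $Z$. The Gram matrix of $\omega\vert_{\W}$ in the basis given by the columns of $Y$ is
\[
G(Z)=Y^* I_{m,n} Y \in \mathsf{H}^2(\F^k),
\]
and it is continuous in $Z$. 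By Sylvester's law of inertia, $\mathsf{s}(\omega\vert_\W)=\mathsf{s}(G(Z))$ regardless of which basis of $\W$ we use. So $\W\in\Gr_{p,q}$ if and only if $G(Z)$ has inertia $(p,q,0)$.

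The key step is that the set of $G\in\mathsf{H}^2(\F^k)$ with inertia $(p,q,0)$ is open in $\mathsf{H}^2(\F^k)$. I would prove this using continuity of eigenvalues. A Hermitian matrix over $\F$, including the quaternionic case via its standard complex $2k\times 2k$ representation, has real eigenvalues that depend continuously on its entries. If $G_0$ is nonsingular with $p$ positive and $q$ negative eigenvalues, let $\varepsilon$ be the smallest of their absolute values. Then for any $G$ with $\|G-G_0\|<\varepsilon$, Weyl's inequality ensures that no eigenvalue crosses zero. Hence the inertia stays at $(p,q,0)$.

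Composing these observations, the preimage of $\Gr_{p,q}$ in each chart is the preimage of an open set under the continuous map $Z\mapsto G(Z)$, so it is open. Therefore $\Gr_{p,q}(\F^{m+n})$ is open in $\Gr_k(\F^d)$. Over $\C$, the charts of $\Gr_k(\C^d)$ are holomorphic, so the open subset inherits the complex structure.

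The only point needing mild care is the quaternionic eigenvalue continuity. I would handle it by embedding $\H^{k\times k}\hookrightarrow\C^{2k\times 2k}$: this sends quaternionic Hermitian matrices to complex Hermitian ones, with each real eigenvalue doubled in multiplicity. That reduces the quaternionic case to the complex case.
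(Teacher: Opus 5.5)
Your proposal is correct and takes essentially the paper's route: both express the inertia of $\omega\vert_\W$ as the inertia of a Hermitian matrix that depends continuously on $\W$, and then pull back the open set of nonsingular Hermitian matrices of fixed inertia. The difference is only packaging. The paper uses a single global map $\W\mapsto P_\W I_{m,n}P_\W+(I-P_\W)$ into $\mathsf{H}^2(\F^d)$, padding with $I$ so the inertia shifts by $(d-k,0,0)$; you use the $k\times k$ Gram matrix $Y^*I_{m,n}Y$ in local affine charts, which also lets you spell out the nonemptiness and quaternionic eigenvalue-continuity points that the paper leaves implicit.
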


\begin{proof}
Let $\W \in \Gr_k(\F^d)$ and $P_\W$ be the Euclidean orthogonal projector onto $\W$. The map $\W \mapsto P_\W$ is a real analytic diffeomorphism $\Gr_k(\F^d) \cong \Gr_k^\pi (\F^d)$ \cite{LLY2020}. Consider the continuous map
\[
\Psi \colon \Gr_k(\F^d) \to \mathsf{H}^2(\F^d), \quad \W \mapsto P_\W I_{m,n} P_\W + (I - P_\W).
\]
Let $Q \in \Iso_d(\F)$ be a matrix whose first $k$ columns form an orthonormal basis of $\W$ with respect to the standard inner product. Then
\[
Q^* P_\W Q = I_k \oplus 0_{d-k} \qquad \text{and} \qquad
Q^* I_{m,n} Q = \begin{bmatrix} B_{11} & B_{12} \\ B_{12}^* & B_{22} \end{bmatrix},
\]
where $B_{11} \in \mathsf{H}^2(\F^k)$ is the Gram matrix of $\omega_{m,n}\vert_\W$. Thus
\[
Q^* \Psi(\W) Q = (I_k \oplus 0_{d-k})(Q^* I_{m,n} Q)(I_k \oplus 0_{d-k}) + (0_k \oplus I_{d-k}) = B_{11} \oplus I_{d-k}.
\]
By Sylvester's law of inertia,
\[
\mathsf{s}(\Psi(\W)) = \mathsf{s}(B_{11} \oplus I_{d-k}) = \mathsf{s}(\omega_{m,n}\vert_\W) + (d-k,0,0).
\]
Hence $\W \in \Gr_{p,q}(\F^{m+n})$ if and only if $\mathsf{s}(\Psi(\W)) = (p+d-k,\,q,\,0)$, i.e.,
\[
\Gr_{p,q}(\F^{m+n}) = \Psi^{-1}(\Sigma), \quad \Sigma \coloneqq \{H \in \mathsf{H}^2(\F^d) : \mathsf{s}(H) = (p+d-k,\,q,\,0)\}.
\]
Clearly, $\Sigma \subseteq \GL_d(\F)$ and it is open as eigenvalues of $H$ depend continuously on $H$. Therefore, $\Gr_{p,q}(\F^{m+n}) = \Psi^{-1}(\Sigma)$ is open in $\Gr_k(\F^d)$ and thus an open smooth submanifold. If $\F = \C$, it is an open complex submanifold. 
\end{proof}

\section{Three models of the indefinite Grassmannian}\label{sec:many}

In addition to the set theoretic Definition~\ref{def:indef}, we provide three other models for the indefinite Grassmannian: as a homogeneous space, as a semialgebraic manifold of orthogonal projectors, and as an $\Iso_{m,n}(\F)$-orbit in the space of $I_{m,n}$-Hermitian matrices. In the last case, one may also choose appropriate parameters to view the indefinite Grassmannian as a set of $I_{m,n}$-orthogonal projectors. As in the case of the standard Grassmannian, the different ways to regard an indefinite Grassmannian afford a versatility when working with it --- all three models would be useful in due course.

We begin by showing that the homogeneous space $\Iso_{m,n}(\F)/(\Iso_{p,q}(\F) \times \Iso_{m-p,n-q}(\F))$ in \cite{Indef, indef2} is indeed the indefinite Grassmannian as defined in Definition~\ref{def:indef}.
\begin{proposition}[Indefinite Grassmannian as homogeneous space]\label{prop:hom}
The group $\Iso_{m,n}(\F)$ acts transitively on $\Gr_{p,q}(\F^{m+n})$, giving an $\Iso_{m,n}(\F)$-equivariant diffeomorphism
\[
\Gr_{p,q}(\F^{m+n}) 
\; \cong \; 
\Iso_{m,n}(\F)/\bigl(\Iso_{p,q}(\F) \times \Iso_{m-p,n-q}(\F) \bigr).
\]
\end{proposition}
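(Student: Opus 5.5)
The plan is to verify three things in turn: that $\Iso_{m,n}(\F)$ acts on $\Gr_{p,q}(\F^{m+n})$, that the action is transitive, and that the stabilizer of a base point is $\Iso_{p,q}(\F)\times\Iso_{m-p,n-q}(\F)$. The diffeomorphism then comes from the standard theorem for smooth transitive actions of Lie groups.

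\textbf{Action.} For $Q\in\Iso_{m,n}(\F)$ and $\W\in\Gr_{p,q}(\F^{m+n})$, the image $Q\W$ has dimension $k$. Moreover $Q|_\W$ is an isometry from $(\W,\omega_{m,n}|_\W)$ onto $(Q\W,\omega_{m,n}|_{Q\W})$. By Sylvester's law of inertia the signature is preserved, so $Q\W\in\Gr_{p,q}(\F^{m+n})$. The action is the restriction of the smooth action $\W\mapsto Q\W$ of $\GL_d(\F)$ on $\Gr_k(\F^d)$ to the subgroup $\Iso_{m,n}(\F)$ and to the open set of Proposition~\ref{prop:open}, so it is smooth.

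\textbf{Transitivity.} Take the base point $\W_0=\operatorname{span}\{e_1,\dots,e_p,e_{m+1},\dots,e_{m+q}\}$. Given any $\W$ of signature $(p,q,0)$, Sylvester's law gives an $\omega$-orthonormal basis $w_1,\dots,w_k$ of $\W$ with Gram matrix $I_{p,q}$. Since $\omega|_\W$ is nondegenerate, $\V=\W\oplus\W^{\perp_\omega}$. Additivity of inertia then forces $\omega|_{\W^{\perp_\omega}}$ to have signature $(m-p,n-q,0)$, so $\W^{\perp_\omega}$ also has an $\omega$-orthonormal basis with Gram matrix $I_{m-p,n-q}$. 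Interleave these two bases into the positions matching $\omega_{m,n}$: the $p$ positive vectors of $\W$ and the $m-p$ positive vectors of $\W^{\perp_\omega}$ go to the first $m$ slots, and the negative ones to the last $n$ slots. This produces a matrix $Q$ with $Q^*I_{m,n}Q=I_{m,n}$ and $Q\W_0=\W$.

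\textbf{Stabilizer.} If $Q\W_0=\W_0$, then $Q$ also preserves $\W_0^{\perp_\omega}=\operatorname{span}\{e_{p+1},\dots,e_m,e_{m+q+1},\dots,e_d\}$. After the coordinate permutation that groups these index sets, $Q$ becomes block diagonal with blocks in $\Iso_{p,q}(\F)$ and $\Iso_{m-p,n-q}(\F)$, and conversely every such block matrix fixes $\W_0$. This stabilizer is a closed subgroup. The orbit map therefore induces a bijective, equivariant, smooth immersion $\Iso_{m,n}(\F)/\stab\to\Gr_{p,q}(\F^{m+n})$.

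\textbf{Main obstacle.} The step that needs care is upgrading this bijection to a diffeomorphism. I would invoke the standard fact that an equivariant bijection from a homogeneous space onto a manifold with a smooth transitive action is a diffeomorphism. Two routes are available: Sard's theorem, which forces constant rank equal to full dimension, or the fact that $\Iso_{m,n}(\F)$ has finitely many components and is hence second countable, so the map is an immersion. Alternatively, one can check dimensions directly: $\dim\Iso_{m,n}-\dim\Iso_{p,q}-\dim\Iso_{m-p,n-q}=k(d-k)\dim_\R\F$, which matches \eqref{eq:dim}. An injective immersion between manifolds of equal dimension is then a diffeomorphism onto an open image, and by transitivity that image is everything.
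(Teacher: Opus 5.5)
Your proof is correct. Your stabilizer computation is the paper's argument in different form: the paper restricts each element of the stabilizer to $\W$ and $\W^{\perp_\omega}$ via a homomorphism $\rho$, while you permute coordinates to make $Q$ block diagonal.

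The genuine difference is in transitivity. The paper invokes Witt's extension theorem, citing separate sources for $\R,\C$ and for $\H$. You instead use nondegeneracy of $\omega|_\W$ to split $\V=\W\oplus\W^{\perp_\omega}$, and additivity of inertia to give the complement signature $(m-p,n-q,0)$. You then assemble the two $\omega$-orthonormal bases into an explicit $Q\in\Iso_{m,n}(\F)$ with $Q\W_0=\W$. In effect you prove the special case of Witt's theorem you need, uniformly over $\R,\C,\H$, from Sylvester's law alone. This is the same matrix the paper later builds in the proof of Proposition~\ref{prop:i-proj}.

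What the paper's route buys is generality. Witt's theorem also handles degenerate subspaces, and the paper reuses it in Section~\ref{sec:strat} to identify the $\Iso_{m,n}(\F)$-orbits on $\Gr_k(\F^d)$ with the strata $\mathcal{O}_{(p,q,r)}$ for $r>0$. There $\W\cap\W^{\perp_\omega}\neq 0$, so your splitting is unavailable.

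You also spell out the passage from bijection to diffeomorphism, which the paper compresses into ``orbit--stabilizer.'' Your dimension count against \eqref{eq:dim} is the cleanest version of this. Injectivity plus constant rank (from equivariance) already gives an immersion, and equal dimensions then give a bijective local diffeomorphism, with no Sard needed. Your phrase that second countability ``so the map is an immersion'' conflates two things. Second countability is what Sard's theorem needs for the submersion half of the standard argument; it is not what makes the map an immersion.
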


\begin{proof}
Let $\W$, $\W' \in \Gr_{p,q}(\F^{m+n})$. By Sylvester's law of inertia, there are bases $u_1,\dots,u_k \in \W$ and $u_1',\dots,u_k' \in \W'$ such that $\omega \vert_{\W} =\omega \vert_{\W'} =I_{p,q}$ in the respective bases. The map $\varphi \colon \W \to \W'$ given by $\varphi(u_i) = u_i'$, $i = 1,\dots,k$ is an $\omega$-isometry. By Witt's extension theorem (see \cite[Theorem~3.9]{Artin1957} for $\R$ and $\C$, and \cite[Theorem~5.1]{H-Witt} for $\H$), $\varphi$ extends to an $\omega$-isometry $\overline{\varphi} \colon \F^{m+n} \to \F^{m+n}$ with $\overline{\varphi}(\W) = \W'$. Therefore, the action $\Iso_{m,n}(\F) \times  \Gr_{p,q}(\F^{m+n})$, $(Q , \W ) \mapsto Q \W$ is transitive. Let $\mathsf{H}$ be the stabilizer of $\W$ in $\Iso_{m,n}(\F)$. The map
\[
\rho: \mathsf{H} \to \GL(\W) \times \GL(\W^{\perp_\omega}), \quad g \mapsto (g\vert_\W, g\vert_{\W^{\perp_\omega}})
\]
is well-defined as $Q(\W^{\perp_\omega}) = Q(\W)^{\perp_\omega} = \W^{\perp_\omega}$ for all $Q \in \mathsf{H}$; it is also a Lie group homomorphism. Since $Q$ is an $\omega$-isometry, $Q\vert_{\W}$ and $Q\vert_{\W^{\perp_\omega}}$ are isometries with respect to restrictions of $\omega$. As $\mathsf{s}(\omega\vert_{\W^{\perp_\omega}})= (m,n,0) - \mathsf{s} (\omega\vert_{\W})  = (m-p,n-q,0)$, we have that $Q\vert_{\W} \in \Iso_{p,q}(\F)$ and $Q\vert_{\W^{\perp_\omega}} \in \Iso_{m-p,n-q}(\F)$, and thus $\im(\rho) \subseteq  \Iso_{p,q}(\F) \times \Iso_{m-p,n-q}(\F)$. On the other hand, for $(Q_1,Q_2) \in \Iso_{p,q}(\F) \times \Iso_{m-p,n-q}(\F)$, the isometry $Q \coloneqq Q_1 \oplus Q_2$ on $\F^d = \W \oplus \W^{\perp_\omega}$ satisfies $\rho(Q) = (Q_1,Q_2)$. Finally, $\rho$ is injective because $g \in \mathsf{H}$ is completely determined by $g\vert_\W$ and $g\vert_{\W^{\perp_\omega}}$. Hence $\rho$ defines an isomorphism of Lie groups $\mathsf{H} \cong \Iso_{p,q}(\F) \times \Iso_{m-p,n-q}(\F)$. The orbit--stabilizer theorem yields the required result.
\end{proof}

In case it is not clear, $\Iso_{p,q}(\F) \times \Iso_{m-p,n-q}(\F)$ is not a parabolic subgroup of $\Iso_{m,n}(\F)$ and so $\Gr_{p,q}(\F^{m+n})$ is not a generalized Grassmannian, i.e., $\G/\mathsf{P}$ where $\mathsf{P}$ is a maximal parabolic subgroup. 

We next characterize the indefinite Grassmannian as a semialgebraic manifold of orthogonal projectors a la \eqref{eq:Grpr}.
\begin{proposition}[Indefinite Grassmannian as orthogonal projectors]\label{prop:proj}
Let
\[
    \Gr_{p,q}^\pi (\F^{m+n}) \coloneqq \{ P \in \F^{d \times d} : P = P^2 = P^*, \,\tr(P) = k, \, \mathsf{s}(PI_{m,n}P) = (p,q, d-k) \} \subseteq \Gr_k^\pi (\F^d).
\]
Let $\psi \colon \Gr_k(\F^d) \to \Gr_k^\pi (\F^d)$, $\W \mapsto P_\W$, and $\varphi = \psi \vert_{\Gr_{p,q}(\mathbb{F}^{m+n})}$.  Then, in the category of smooth manifolds, the following diagram commutes
\begin{equation}\label{eq:proj-model-commute}
\begin{tikzcd}[column sep=large, row sep=large]
\Gr_{p,q}(\mathbb{F}^{m+n}) \arrow[r, hook] \arrow[d, "\varphi"', "\cong"]
& \Gr_k(\mathbb{F}^{d}) \arrow[d, "\psi"', "\cong"] \\
\Gr_{p,q}^\pi (\mathbb{F}^{m+n}) \arrow[r, hook]
& \Gr_k^\pi (\mathbb{F}^{d}).
\end{tikzcd}
\end{equation}
\end{proposition}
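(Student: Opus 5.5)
The plan is to show that $\psi$ restricted to $\Gr_{p,q}(\F^{m+n})$ lands exactly onto $\Gr_{p,q}^\pi(\F^{m+n})$. Once this set-theoretic identification is in hand, the diagram commutes by construction: both horizontal maps are inclusions and $\varphi$ is a restriction of $\psi$. Smoothness then follows formally. By Proposition~\ref{prop:open}, $\Gr_{p,q}(\F^{m+n})$ is an open submanifold of $\Gr_k(\F^d)$, and $\psi$ is a diffeomorphism. Hence $\psi(\Gr_{p,q}(\F^{m+n}))$ is an open subset of $\Gr_k^\pi(\F^d)$ carrying the induced smooth structure, and $\varphi$ is a diffeomorphism onto it, being the restriction of a diffeomorphism to an open submanifold.

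The key step is the equivalence: for $\W\in\Gr_k(\F^d)$,
\[
\mathsf{s}(P_\W I_{m,n} P_\W) = \mathsf{s}(\omega_{m,n}\vert_\W) + (0,0,d-k).
\]
I would reuse the computation in the proof of Proposition~\ref{prop:open}. Choose $Q\in\Iso_d(\F)$ whose first $k$ columns form an orthonormal basis of $\W$. Then
\[
Q^*P_\W I_{m,n}P_\W Q = (I_k\oplus 0)(Q^*I_{m,n}Q)(I_k\oplus 0) = B_{11}\oplus 0_{d-k},
\]
where $B_{11}$ is the Gram matrix of $\omega_{m,n}\vert_\W$. Since $Q^*=Q^{-1}$, this is simultaneously a congruence and a similarity. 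Sylvester's law of inertia therefore gives the displayed identity. It follows that $\mathsf{s}(\omega\vert_\W)=(p,q,0)$ if and only if $\mathsf{s}(P_\W I_{m,n}P_\W)=(p,q,d-k)$.

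Combining this with the description \eqref{eq:Grpr} of $\Gr_k^\pi(\F^d)$ shows that $\psi(\Gr_{p,q}(\F^{m+n}))=\Gr_{p,q}^\pi(\F^{m+n})$. The inclusion $\Gr_{p,q}^\pi(\F^{m+n}) \subseteq \Gr_k^\pi(\F^d)$ is immediate from the definitions.

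The main point requiring care is the meaning of ``in the category of smooth manifolds''. We need $\Gr_{p,q}^\pi(\F^{m+n})$ to be a smooth manifold in its own right, with the bottom arrow a smooth embedding. I would settle this by noting that $\Gr_{p,q}^\pi(\F^{m+n})$ is the image of an open set under the diffeomorphism $\psi$, hence an open subset of the embedded submanifold $\Gr_k^\pi(\F^d)\subseteq\F^{d\times d}$. Alternatively, it is the preimage of the open set of matrices whose inertia is $(p,q,d-k)$ among rank-$k$ Hermitian matrices. Openness there follows from continuity of eigenvalues, since the rank stays equal to $k$ on $\Gr_k^\pi$. With either route, the inclusion is an open embedding, and commutativity of \eqref{eq:proj-model-commute} holds in the smooth category.
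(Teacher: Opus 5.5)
Your proposal is correct and follows essentially the same route as the paper: you prove the inertia identity $\mathsf{s}(P_\W I_{m,n}P_\W)=\mathsf{s}(\omega_{m,n}\vert_\W)+(0,0,d-k)$ by completing an orthonormal basis of $\W$ to $Q\in\Iso_d(\F)$ and applying Sylvester's law, and then restrict the diffeomorphism $\psi$ to the open submanifold $\Gr_{p,q}(\F^{m+n})$. One small correction to your optional alternative openness argument: on $\Gr_k^\pi(\F^d)$ the matrix $PI_{m,n}P$ has rank at most $k$, not exactly $k$, and it is this upper bound combined with continuity of eigenvalues that makes the argument work.
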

\begin{proof}
We show that $\varphi$ is a bijection by showing that for a $k$-dimensional subspace $\W$ of $\F^d$, $\W \in \Gr_{p,q}(\F^{m+n})$ if and only if $\mathsf{s}(P_\W I_{m,n}P_\W)=(p,q, d-k)$. If so, $\psi^{-1}\bigl(\Gr_{p,q}^\pi (\F^{m+n})\bigr) = \Gr_{p,q}(\F^{m+n})$ and the bijection $\psi$ restricts to a bijection $\varphi$ onto $\Gr_{p,q}^\pi (\F^{m+n})$; furthermore, $\varphi$ is a diffeomorphism onto its image as it is a restriction of a diffeomorphism \cite{LLY2020} $\psi$   to an open submanifold.

Let $\W \in \Gr_k(\F^d)$ and $Q \in \Iso_d(\F)$ be such that its first $k$ columns $q_1,\dots,q_k$ form an orthonormal basis of $\W$ in the Euclidean inner product. Then $Q^* P_\W Q = I_k \oplus 0_{d-k}$. Partition
\[
Q^* I_{m,n} Q = \begin{bmatrix}
    B_{11} & B_{12} \\
    B_{12}^* & B_{22}
\end{bmatrix}, \quad  B_{11} \in \mathsf{H}^2(\F^{k}),\; B_{12} \in \F^{k \times (d-k)},\; B_{22} \in \mathsf{H}^2(\F^{d-k}).
\]
So $B_{11}$ is the matrix of $\omega_{m,n}\vert_\W$ in the basis $q_1,\dots, q_k$ and so $\mathsf{s}(\omega_{m,n}\vert_\W) = \mathsf{s}(B_{11})$.
Since
\[
Q^* P_\W I_{m,n} P_\W Q = (Q^* P_\W Q)(Q^* I_{m,n} Q)(Q^* P_\W Q) = B_{11} \oplus 0_{d-k},
\]
Sylvester's law of inertia gives
\[
\mathsf{s}(P_\W I_{m,n} P_\W) = \mathsf{s}(B_{11} \oplus 0_{d-k}) = \mathsf{s}(B_{11}) + (0,0,d-k) = \mathsf{s}(\omega_{m,n}\vert_\W) + (0,0,d-k).
\]
Therefore, $\mathsf{s}(P_\W I_{m,n} P_\W) = (p,q,d-k)$ if and only if $\mathsf{s}(\omega_{m,n}\vert_\W) = (p,q,0)$, i.e., if and only if $\W \in \Gr_{p,q}(\F^{m+n})$.
\end{proof}

\begin{corollary}[Indefinite Grassmannian is semialgebraic]\label{cor:semialgebraic}
$\Gr_{p,q}^\pi (\F^{m+n})$ is a semialgebraic smooth submanifold of $\F^{d\times d}$. 
\end{corollary}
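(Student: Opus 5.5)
The plan has two parts: show that $\Gr_{p,q}^\pi(\F^{m+n})$ is semialgebraic, then show that it is a smooth submanifold of $\F^{d\times d}$. Throughout, identify $\F^{d\times d}$ with $\R^{d^2\dim_\R\F}$ by taking real coordinates.

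For the semialgebraic part, the conditions $P=P^2$, $P=P^*$ and $\tr(P)=k$ are polynomial equations in the real entries of $P$. These equations cut out the real algebraic set $\Gr_k^\pi(\F^d)$. The inertia condition $\mathsf{s}(PI_{m,n}P)=(p,q,d-k)$ can also be written semialgebraically. The cleanest route avoids eigenvalues entirely. By the proof of Proposition~\ref{prop:open}, $\mathsf{s}(\Psi(\W))=\mathsf{s}(\omega_{m,n}\vert_\W)+(d-k,0,0)$, where $\Psi(P)=PI_{m,n}P+I-P$. So the condition is equivalent to
\[
\mathsf{s}(PI_{m,n}P+I-P)=(p+d-k,q,0).
\]
The set of Hermitian matrices with a prescribed inertia and no zero eigenvalues is semialgebraic. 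One way to see this is through the characteristic polynomial $\det(tI-H)=\sum_j c_j(H)t^j$, whose coefficients are polynomials in the entries of $H$. Since $H$ is Hermitian, all roots are real. Descartes' rule of signs is then exact: the number of positive roots equals the number of sign changes in $(c_j)$. For $\F=\H$, first pass to the complex $2d\times 2d$ representation, where every multiplicity doubles. Alternatively, the condition is first-order in the real field: ``there exists $S\in\GL_d(\F)$ with $H=S\diag(I_{p+d-k},-I_q)S^*$'' by Sylvester's law of inertia. The Tarski--Seidenberg theorem then makes its projection semialgebraic. I would use this second argument because it needs no case analysis. In either case, $\Gr_{p,q}^\pi(\F^{m+n})$ is the intersection of an algebraic set with the preimage of a semialgebraic set under a polynomial map, and hence is semialgebraic.

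For the smooth submanifold part, use the fact that $\Gr_k^\pi(\F^d)$ is an embedded smooth submanifold of $\F^{d\times d}$, which is known \cite{LLY2020} and is implicit in \eqref{eq:Grpr}. By Proposition~\ref{prop:proj}, $\Gr_{p,q}^\pi(\F^{m+n})=\psi(\Gr_{p,q}(\F^{m+n}))$. Proposition~\ref{prop:open} shows that $\Gr_{p,q}(\F^{m+n})$ is open in $\Gr_k(\F^d)$, and $\psi$ is a diffeomorphism, so $\Gr_{p,q}^\pi(\F^{m+n})$ is open in $\Gr_k^\pi(\F^d)$. More directly, it equals $\Gr_k^\pi(\F^d)\cap\Psi^{-1}(\Sigma)$ with $\Sigma$ open. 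An open subset of an embedded submanifold is itself an embedded submanifold, which finishes the argument.

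The only step needing care is expressing the inertia condition semialgebraically, particularly in the quaternionic case. Invoking Tarski--Seidenberg on the existential formula over $S$ handles every $\F$ uniformly: quaternion matrices are written in real coordinates, and $\GL_d(\F)$ is the semialgebraic set where the real determinant of the real representation is nonzero.
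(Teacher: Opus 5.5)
Your proposal is correct and follows essentially the same route as the paper. Like the paper, you write the inertia condition as an existential formula over $S\in\GL_d(\F)$ via Sylvester's law of inertia, apply Tarski--Seidenberg, and get smoothness from Proposition~\ref{prop:proj}, since $\Gr_{p,q}^\pi(\F^{m+n})$ is open in the embedded submanifold $\Gr_k^\pi(\F^d)$. Using $PI_{m,n}P+I-P$ in place of $PI_{m,n}P$, and the optional Descartes-rule-of-signs argument, are harmless variations that spell out what the paper's one-line proof leaves implicit.
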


\begin{proof}
Since $\{(H,S): H = SI_{m,n}S^*,\, S \in \GL_d(\F)\}$ is a semialgebraic set, the result follows from applying  Sylvester's law of inertia and Tarski--Seidenberg  theorem to Proposition~\ref{prop:proj}.
\end{proof}
    
The embedding in \eqref{eq:proj-model-commute} has one shortcoming, namely, it does not preserve $\Iso_{m,n}(\F)$ symmetry as in Proposition~\ref{prop:hom}. We now seek an $\Iso_{m,n}(\F)$-equivariant embedding of $\Gr_{p,q}(\F^{m+n})$. 
\begin{proposition}\label{prop:equi-orb}
Let $\lambda  \ne \mu  \in \R$. Then
\begin{equation}\label{eq:equiv-model}
\begin{aligned}
    \Gr_{p,q}(\F^{m+n}) &\cong \{ Q \diag(\lambda  I_p, \mu  I_{m-p}, \lambda  I_q, \mu  I_{n-q}) Q^{-1} \in \mathsf{H}^2(\F^d; I_{m,n}) : Q \in \Iso_{m,n}(\F) \}\\
    &\cong \{ Q(\lambda  I_k \oplus \mu  I_{d-k})Q^{-1} \in \mathsf{H}^2(\F^d; I_{m,n}^{p,q}) : Q \in \Iso(\F; I_{m,n}^{p,q}) \} \eqqcolon \Gr^{\lambda , \mu }_{p,q}(\F^{m+n}).
\end{aligned}
\end{equation}
\end{proposition}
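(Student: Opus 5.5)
The plan is to build the first isomorphism in \eqref{eq:equiv-model} as an orbit map and then get the second one by conjugating by a permutation. Set $D \coloneqq \diag(\lambda I_p, \mu I_{m-p}, \lambda I_q, \mu I_{n-q})$ and $\W_0 \coloneqq \operatorname{span}\{e_1,\dots,e_p,e_{m+1},\dots,e_{m+q}\}$. Then $\W_0 \in \Gr_{p,q}(\F^{m+n})$, its $\omega$-orthogonal complement is $\W_0^{\perp_\omega} = \operatorname{span}\{e_{p+1},\dots,e_m,e_{m+q+1},\dots,e_d\}$, and $D$ acts as $\lambda$ on $\W_0$ and as $\mu$ on $\W_0^{\perp_\omega}$.

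Since $D$ is real diagonal, it commutes with $I_{m,n}$, so $I_{m,n}D = D^* I_{m,n}$. For $Q \in \Iso_{m,n}(\F)$ we have $Q^{-1} = I_{m,n}Q^*I_{m,n}$, and a direct check gives $I_{m,n}QDQ^{-1} = (QDQ^{-1})^*I_{m,n}$. Hence the orbit $\mathcal{O} \coloneqq \{QDQ^{-1}\}$ lies in $\mathsf{H}^2(\F^d; I_{m,n})$.

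Define $\Phi \colon \Gr_{p,q}(\F^{m+n}) \to \mathcal{O}$ by sending $\W$ to the operator $X_\W$ that is $\lambda$ on $\W$ and $\mu$ on $\W^{\perp_\omega}$. This is well defined because $\omega|_\W$ is nondegenerate, so $\F^d = \W \oplus \W^{\perp_\omega}$. The key steps are as follows.
\begin{enumerate}[(i)]
\item $\Phi$ is equivariant: for $Q \in \Iso_{m,n}(\F)$, $Q(\W^{\perp_\omega}) = (Q\W)^{\perp_\omega}$, so $X_{Q\W} = QX_\W Q^{-1}$. By Proposition~\ref{prop:hom}, every $\W$ equals $Q\W_0$ for some $Q$, so $X_\W = QDQ^{-1}$ and $\Phi$ is surjective.
\item $\Phi$ is injective because $\W = \ker(X_\W - \lambda I)$; here $\lambda \neq \mu$ is used. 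Equivalently, the stabilizer of $D$ under conjugation is the set of $Q$ preserving both eigenspaces $\W_0$ and $\W_0^{\perp_\omega}$, which is $\Iso_{p,q}(\F)\times\Iso_{m-p,n-q}(\F)$, exactly the stabilizer of $\W_0$ computed in Proposition~\ref{prop:hom}.
\item $\Phi$ is a diffeomorphism. The map $\Iso_{m,n}(\F) \to \F^{d\times d}$, $Q \mapsto QDQ^{-1}$, is smooth and constant on cosets of the common stabilizer. It therefore induces an injective immersion from the homogeneous space onto the orbit $\mathcal{O}$.
\end{enumerate}

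The main obstacle is showing that $\mathcal{O}$ is an embedded submanifold, so that the bijection is a diffeomorphism onto its image rather than merely an injective immersion. I would avoid general orbit theory and argue directly. The inverse $X \mapsto \ker(X-\lambda I)$ can be written smoothly as $X \mapsto \operatorname{im}\bigl((X-\mu I)/(\lambda-\mu)\bigr)$, the image of a rank-$k$ idempotent. This extends to a smooth map from a neighborhood of $\mathcal{O}$ in the space of matrices to $\Gr_k(\F^d)$, using the smooth dependence of the column space of a constant-rank matrix. Composing with $\varphi$ from Proposition~\ref{prop:proj} gives a continuous, indeed smooth, inverse. Then $\Phi$ is a homeomorphism onto $\mathcal{O}$ with the subspace topology, and an injective immersion that is a homeomorphism onto its image is an embedding. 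Alternatively, one can note that $\mathcal{O}$ is the closed semialgebraic set $\{X \in \mathsf{H}^2(\F^d; I_{m,n}) : (X-\lambda I)(X-\mu I)=0,\ \operatorname{rank}(X-\mu I)=k,\ \mathsf{s}(\omega|_{\ker(X-\lambda I)})=(p,q,0)\}$, which is a single orbit.

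For the second isomorphism, take the permutation matrix $\Pi$ that reorders the basis as $(e_1,\dots,e_p,e_{m+1},\dots,e_{m+q},e_{p+1},\dots,e_m,e_{m+q+1},\dots,e_d)$. Then $\Pi^* I_{m,n}\Pi = I_{m,n}^{p,q}$ and $\Pi^* D\Pi = \lambda I_k\oplus\mu I_{d-k}$. Conjugation $X\mapsto \Pi^*X\Pi$ maps $\mathsf{H}^2(\F^d;I_{m,n})$ onto $\mathsf{H}^2(\F^d;I_{m,n}^{p,q})$ and $\Iso_{m,n}(\F)$ onto $\Iso(\F;I_{m,n}^{p,q})$. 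It therefore carries the first orbit onto the second, and since it is a linear isomorphism it is a diffeomorphism.
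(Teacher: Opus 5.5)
Your proposal is correct, but it reaches the diffeomorphism by a different route from the paper. The paper runs the permutation the other way: it uses $\Pi$ to reduce the first model to the second. It then computes the conjugation stabilizer of $\lambda I_k\oplus\mu I_{d-k}$ in $\Iso(\F;I_{m,n}^{p,q})$ by a block calculation forcing $Q_{12}=Q_{21}=0$, and identifies this with the stabilizer of $\W$, namely $\Iso_{p,q}(\F)\times\Iso_{m-p,n-q}(\F)$. It concludes by the orbit--stabilizer theorem together with Proposition~\ref{prop:hom}.

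You instead write down the equivariant map $\W\mapsto X_\W$ explicitly. This is the map $\Phi_{\lambda,\mu}$ that the paper only introduces later, in Proposition~\ref{prop:pseudo}(iii). You get surjectivity from transitivity, injectivity from $\W=\ker(X_\W-\lambda I)$, and the diffeomorphism from an explicit continuous inverse.

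What your version buys is the embedding. In the smooth category, orbit--stabilizer only gives an injective immersion $\Iso_{m,n}(\F)/\mathsf{H}\to\F^{d\times d}$ onto the orbit. Since $\Iso_{m,n}(\F)$ is noncompact, the fact that the orbit carries the subspace topology, i.e.\ is embedded, is left implicit in the paper; your inverse supplies exactly that. The paper's version is shorter and stays entirely within the homogeneous-space framework.

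One small correction. An open neighborhood of $\mathcal{O}$ in $\F^{d\times d}$ contains matrices with $\rank(X-\mu I)>k$, so ``smooth dependence of the column space of a constant-rank matrix'' does not by itself give an extension to such a neighborhood. You do not need one. Continuity of $X\mapsto\im(X-\mu I)$ on $\mathcal{O}$ itself, where the rank is identically $k$, already makes $\Phi$ a homeomorphism onto $\mathcal{O}$. If you want a local smooth extension anyway, fix $k$ columns of $X-\mu I$ that are linearly independent at a given point of $\mathcal{O}$. Their span is smooth on an open neighborhood and coincides with $\im(X-\mu I)$ on $\mathcal{O}$ there.
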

\begin{proof}
Consider the permutation matrix 
\begin{equation}\label{eq:perm}
    \Pi \coloneqq \bigl[ e_1, \dots , e_p , e_{m+1}, \dots , e_{m+q} , e_{p+1}, \dots, e_m, e_{m+q+1}, \dots , e_d \bigr].
\end{equation}
Then
$\Pi^* \diag(\lambda  I_p, \mu  I_{m-p}, \lambda  I_q, \mu  I_{n-q}) \Pi = \lambda  I_k \oplus \mu  I_{d-k}$, $\Pi^* I_{m,n} \Pi = I_{m,n}^{p,q}$, and $\Pi^* \Iso_{m,n}(\F)\Pi = \Iso(\F; I_{m,n}^{p,q})$. Let $\W$ be the span of the first $k$ columns of $\Pi$ and $\W'$ the span of the last $d-k$. Then $\mathsf{s}(\omega_{m,n}\vert_{\W}) = \mathsf{s}(I_{p,q}) = (p,q,0)$, so $\W \in \Gr_{p,q}(\F^{m+n})$, $\mathsf{s}(\omega_{m,n}\vert_{\W'}) = (m-p,n-q,0)$, and $\W' = \W^{\perp_{\omega_{m,n}}}$. Since $\Pi$ is unitary,
$Q \diag(\lambda  I_p, \mu  I_{m-p}, \lambda  I_q, \mu  I_{n-q})Q^{-1} = \diag(\lambda  I_p, \mu  I_{m-p}, \lambda  I_q, \mu  I_{n-q})$ if and only if  $(\Pi^* Q \Pi) (\lambda  I_k \oplus \mu  I_{d-k}) = (\lambda  I_k \oplus \mu  I_{d-k}) (\Pi^* Q \Pi)$. Thus, it suffices to prove the second diffeomorphism.
We partition $Q \in \Iso(\F;I_{m,n}^{p,q})$ into $Q = \begin{bsmallmatrix} Q_{11} & Q_{12} \\ Q_{21} & Q_{22} \end{bsmallmatrix}$ with $Q_{11} \in \F^{k \times k}$ and $Q_{22} \in \F^{(d-k) \times (d-k)}$.
Then
\[
Q \begin{bmatrix} \lambda  I_k & 0 \\ 0 & \mu  I_{d-k} \end{bmatrix} = \begin{bmatrix} \lambda  Q_{11} & \mu  Q_{12} \\ \lambda  Q_{21} & \mu  Q_{22} \end{bmatrix},
\qquad
\begin{bmatrix} \lambda  I_k & 0 \\ 0 & \mu  I_{d-k} \end{bmatrix} Q = \begin{bmatrix} \lambda  Q_{11} & \lambda  Q_{12} \\ \mu  Q_{21} & \mu  Q_{22} \end{bmatrix},
\]
which are equal if and only if $(\lambda  - \mu )Q_{12} = 0$ and $(\lambda  - \mu )Q_{21} = 0$. Since $\lambda  \ne \mu $, we have $Q_{12}=0$ and $Q_{21}=0$. Conversely, any block-diagonal $Q$ commutes with $\lambda  I_k \oplus \mu  I_{d-k}$. Therefore,
\begin{align*}
\mathsf{H} &\coloneqq \{Q \in \Iso(\F;I_{m,n}^{p,q}) : Q(\lambda  I_k \oplus \mu  I_{d-k})Q^{-1} = \lambda  I_k \oplus \mu  I_{d-k}\} \\
&= \biggl\{Q \in \Iso(\F;I_{m,n}^{p,q}) : Q = \begin{bmatrix} Q_{11} & 0 \\ 0 & Q_{22} \end{bmatrix} \biggr\}.
\end{align*}
Now $Q_{21} = 0$ if and only if $Q\W \subseteq \W$; since $Q$ is invertible, $Q\W = \W$. If $Q\W = \W$, then $Q\W'  = (Q\W)^{\perp_{\omega_{m,n}}} = \W'$, i.e., $Q_{12} = 0$. Therefore, $\mathsf{H}$ is the stabilizer of $\W$ in $\Iso_{m,n}(\F)$. For $Q \in \mathsf{H} \subseteq \Iso_{m,n}(\F)$,
\[
Q^* I_{m,n}^{p,q}Q =\begin{bmatrix}
    Q_{11}^* &0\\ 0 &Q_{22}^*
\end{bmatrix} \begin{bmatrix}
    I_{p,q} &0 \\0 &I_{m-p,n-q}
\end{bmatrix}\begin{bmatrix}
    Q_{11} &0\\ 0 &Q_{22}
\end{bmatrix} = \begin{bmatrix}
    I_{p,q} &0 \\0 &I_{m-p,n-q}
\end{bmatrix},
\]
i.e., $Q_{11} \in \Iso_{p,q}(\F)$ and $Q_{22} \in \Iso_{m-p,n-q}(\F)$. By the orbit--stabilizer theorem and Proposition~\ref{prop:hom},
\[
\Gr^{\lambda , \mu }_{p,q}(\F^{m+n}) \cong \Iso(\F;I_{m,n}^{p,q})/\mathsf{H} = \Iso_{m,n}(\F)/\bigl(\Iso_{p,q}(\F) \times \Iso_{m-p,n-q}(\F)\bigr) \cong \Gr_{p,q}(\F^{m+n}). 
\]
Lastly, for all $Q \in \Iso(\F;I_{m,n}^{p,q})$, a direct computation shows
\begin{equation}\label{eq:i-herm}
(Q (\lambda  I_k \oplus \mu  I_{d-k}) Q^{-1})^* I_{m,n}^{p,q}  = I_{m,n}^{p,q} (Q (\lambda  I_k \oplus \mu  I_{d-k}) Q^{-1}),
\end{equation}
as required.
\end{proof}
A projection $P \in \F^{d \times d}$ is orthogonal with respect to the indefinite form defined by $I_{m,n}$ if it is $I_{m,n}$-Hermitian, i.e.,
\[
P^2 = P \quad \text{and} \quad P^* I_{m,n} = I_{m,n}P.
\]
We call these $I_{m,n}$-orthogonal projector, a consequence of
\begin{equation}\label{eq:i-ortho}
    \ker(P) = \im(P)^{\perp_{I_{m,n}}}.
\end{equation}
Choosing $\lambda  = 1$ and $\mu  = 0$ in \eqref{eq:equiv-model} yields an alternative to Proposition~\ref{prop:proj}.

\begin{proposition}[Indefinite Grassmannian as $I_{m,n}$-orthogonal projectors]\label{prop:i-proj}
\[
\Gr_{p,q}(\F^{m+n}) \cong \{P \in \F^{d \times d}: P = P^2,\,P^* I_{m,n} = I_{m,n}P, \,\mathsf{s}(I_{m,n}P) = (p,q,d-k)\}.
\]
\end{proposition}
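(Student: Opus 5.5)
The plan is to identify the right-hand side, call it $\mathcal{P}$, with $\Gr^{1,0}_{p,q}(\F^{m+n})$ from Proposition~\ref{prop:equi-orb}, in its first form: $\Iso_{m,n}(\F)$-conjugates of $D \coloneqq \diag(I_p,0_{m-p},I_q,0_{n-q})$. A more direct and transparent route is the map $\W \mapsto P_\W^\omega$. Here $P_\W^\omega$ denotes the projection onto $\W$ along $\W^{\perp_\omega}$, which is well defined because $\omega\vert_\W$ is nondegenerate, so $\F^d = \W \oplus \W^{\perp_\omega}$. I will show that this map is a bijection $\Gr_{p,q}(\F^{m+n}) \to \mathcal{P}$ and that it agrees with the orbit map of Proposition~\ref{prop:equi-orb}. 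Smoothness of the map and of its inverse then follows from that proposition.

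\textbf{Step 1: $P_\W^\omega \in \mathcal{P}$.} Idempotence holds by construction. For $x,y$ we compute $\omega(Px,y) = \omega(Px,Py) = \omega(x,Py)$, since $x-Px$ and $y-Py$ lie in $\W^{\perp_\omega}$. This gives $P^*I_{m,n} = I_{m,n}P$. For the inertia, choose $Q \in \Iso_{m,n}(\F)$ with $Q\W = \Pi\,\mathrm{span}(e_1,\dots,e_k)$, using transitivity from Proposition~\ref{prop:hom}. Then $P = QDQ^{-1}$. Because $Q^{-1} = I_{m,n}Q^*I_{m,n}$, we get $I_{m,n}P = Q^{-*}(I_{m,n}D)Q^{-1}$. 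Here $I_{m,n}D = \diag(I_p,0,-I_q,0)$, so Sylvester's law gives $\mathsf{s}(I_{m,n}P) = (p,q,d-k)$.

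\textbf{Step 2: surjectivity and injectivity.} Let $P \in \mathcal{P}$ and set $\W = \im P$. Relation \eqref{eq:i-ortho} gives $\ker P = \W^{\perp_\omega}$. From $\W \oplus \W^{\perp_\omega} = \F^d$ we get $\dim \W = k$: indeed $\rank(I_{m,n}P) = p+q$, so $\rank P = k$. Then $\W \cap \W^{\perp_\omega} = 0$, so $\omega\vert_\W$ is nondegenerate. To find the signature, choose a basis matrix $S = [S_1\ S_2]$ adapted to $\W \oplus \W^{\perp_\omega}$. Then $S^*I_{m,n}PS = \diag(S_1^*I_{m,n}S_1, 0)$, and Sylvester's law shows that $\mathsf{s}(\omega\vert_\W) = (p,q,0)$. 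Hence $\W \in \Gr_{p,q}(\F^{m+n})$ and $P = P^\omega_\W$. Injectivity is immediate, since $\W = \im P^\omega_\W$.

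\textbf{Step 3: smoothness.} Step 1 shows that $P_\W^\omega = QDQ^{-1}$ whenever $\W = Q\W_0$, where $\W_0 = \Pi\,\mathrm{span}(e_1,\dots,e_k)$. So our bijection is exactly the composite of the diffeomorphisms of Propositions~\ref{prop:hom} and \ref{prop:equi-orb} with $(\lambda,\mu) = (1,0)$. In particular $\mathcal{P} = \Gr^{1,0}_{p,q}(\F^{m+n})$ as sets, and the diffeomorphism follows.

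I expect the only real subtlety to be in Step 2: deducing nondegeneracy of $\omega\vert_\W$ and the signature $(p,q,0)$ purely from $\mathsf{s}(I_{m,n}P) = (p,q,d-k)$. The trace/rank condition from the Euclidean model is absent here, so the rank argument above has to supply it. Over $\H$, one should check that the congruence $S^*(\cdot)S$ and Sylvester's law are applied on the correct side, consistent with right vector spaces.
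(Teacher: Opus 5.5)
Your proof is correct and is essentially the paper's argument. You use the same inertia computation $I_{m,n}QDQ^{-1} = Q^{-*}(I_{m,n}D)Q^{-1}$, and the same converse via $S^*I_{m,n}PS = \diag(S_1^*I_{m,n}S_1,0)$ with $\ker P = \im(P)^{\perp_\omega}$. Both arguments then conclude by identifying the set with the $\Iso_{m,n}(\F)$-orbit of $D$ from Proposition~\ref{prop:equi-orb}.

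The differences are minor:
\begin{itemize}
\item The paper obtains $P = QDQ^{-1}$ by assembling $Q$ directly from $\omega$-orthonormal bases of $\im P$ and $\ker P$. You instead use Witt transitivity through $\W = \im P$, which has the side benefit of exhibiting the map as $\W \mapsto P^\omega_\W$, i.e., the $\Phi_{1,0}$ of Proposition~\ref{prop:pseudo}.
\item Your explicit step $\rank P = \rank(I_{m,n}P) = k$ fills in something the paper takes for granted when it writes $S_1 \in \F^{d\times k}$.
\item In Step~1 you need $Q\W_0 = \W$, not $Q\W = \W_0$, to get $P = QDQ^{-1}$.
\end{itemize}
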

\begin{proof}
Let $D = \diag(I_p, 0_{m-p}, I_q, 0_{n-q})$. For $Q \in \Iso_{m,n}(\F)$,  $(QDQ^{-1})(QDQ^{-1}) = QDQ^{-1}$. By \eqref{eq:i-herm}, $QDQ^{-1}$ is an $I_{m,n}$-orthogonal projector. As $I_{m,n} QDQ^{-1} = \bigl( (Q^{-1})^* I_{m,n} Q^{-1} \bigr) Q D Q^{-1} = (Q^{-1})^* (I_{m,n} D) Q^{-1}$,
\[
\mathsf{s}(I_{m,n} QDQ^{-1}) = \mathsf{s}(I_{m,n}D) = \mathsf{s}\bigl(\diag(I_p, 0_{m-p}, -I_q, 0_{n-q})\bigr) = (p,q,d-k)
\]
by Sylvester's law of inertia.
On the other hand, let $P \in \F^{d \times d}$ be an $I_{m,n}$-orthogonal projector. Then $I_{m,n}P \in \mathsf{H}^2(\F^d)$. Suppose $\mathsf{s}(I_{m,n}P)=(p,q,d-k)$. Let $S = [S_1, S_2] \in \F^{d \times d}$ be such that the columns of $S_1 \in \F^{d \times k}$ form a basis of $\im(P)$ and those of $S_2 \in \F^{d \times (d-k)}$ form a basis of $\ker(P)$. Since $P=P^2$, $\F^d = \im(P) \oplus \ker(P)$, so $S \in \GL_d(\F)$. Since $PS = [S_1, 0]$,
\[
S^* (I_{m,n} P) S = \begin{bmatrix} S_1^* \\ S_2^* \end{bmatrix} I_{m,n} \begin{bmatrix} S_1 & 0 \end{bmatrix} = \begin{bmatrix} S_1^* I_{m,n} S_1 & 0 \\ S_2^* I_{m,n} S_1 & 0 \end{bmatrix} = \begin{bmatrix} S_1^* I_{m,n} S_1 & 0 \\ 0 & 0 \end{bmatrix},
\]
where the last equality is by \eqref{eq:i-ortho}. By Sylvester's law of inertia again,
\[\mathsf{s}(I_{m,n} P) = \mathsf{s}(S^* I_{m,n} P S) = \mathsf{s}(S_1^* I_{m,n} S_1) + (0, 0, d-k).\]
Therefore, $\mathsf{s} (I_{m,n} \vert_{\im(P)}) = (p,q,0)$ and $\mathsf{s} (I_{m,n} \vert_{\ker(P)}) = (m-p,n-q,0)$.
Choose an $\omega_{m,n}$-orthonormal basis $\{u_1, \dots, u_p, v_1, \dots, v_q\}$ of $\im(P)$ such that $u_i^*I_{m,n}u_i = 1$, $i=1,\dots,p$, and $v_j^* I_{m,n}v_j=-1$, $j=1,\dots,q$. Similarly, choose an $\omega_{m,n}$-orthonormal basis $\{w_1, \dots, w_{m-p}, z_1, \dots, z_{n-q}\}$ of $\ker(P)$ such that $w_i^*I_{m,n}w_i = 1$, $i=1,\dots,m-p$, and $z_j^* I_{m,n}z_j=-1$, $j=1,\dots,n-q$. Let
\[
Q \coloneqq \bigl[ u_1 ,\dots, u_p , w_1, \dots, w_{m-p} , v_1, \dots, v_q , z_1, \dots, z_{n-q} \bigr].
\]
By construction, $Q^* I_{m,n} Q = I_{m,n}$, i.e., $Q \in \Iso_{m,n}(\F)$. Furthermore, 
\[
P [ u_1 ,\dots, u_p , w_1, \dots, w_{m-p} , v_1, \dots, v_q , z_1 ,\dots, z_{n-q} \bigr] = QD,
\]
so $P = QDQ^{-1}$. Applying \eqref{eq:equiv-model} completes the proof.
\end{proof}
Other choices of $(\lambda , \mu )$ also yield interesting embeddings. For example, if $(\lambda , \mu ) = (1,-1)$, then $\Gr^{1,-1}_{p,q}(\F^{m+n})$ is a set of involution matrices, an indefinite analog to the involution model in \cite{LLY2020}.

\section{Indefinite Stiefel manifold}

The standard Grassmannian of $k$-planes in $n$-space has a companion Stiefel manifold of orthonormal $k$-frames in $n$-space. It is the same for the indefinite Grassmannian. We define this in the most straightforward manner:
\begin{definition}[Indefinite Stiefel manifold]
The indefinite Stiefel manifold of $\omega$-orthonormal $k$-frames of signature $(p,q)$ is
\[
    \St_{p,q}(\F^{m+n}) \coloneqq \bigl\{Y\in\F^{d\times k}:Y^*I_{m,n}Y=I_{p,q}\bigr\}.
\]
Thus the columns of $Y$ form an ordered basis of a subspace on which
$\omega$ has Gram matrix $I_{p,q}$.
\end{definition}
An immediate observation is that this defines a smooth manifold with easily describable tangent spaces.
\begin{proposition}
The indefinite Stiefel manifold is a real-analytic embedded submanifold of
$\F^{d\times k}$ of dimension
\[
    \dim_{\R}\St_{p,q}(\F^{m+n})
    =\begin{cases}
        dk-k(k+1)/2 &\text{if }\F=\R,\\
        2dk-k^2 &\text{if }\F=\C,\\
        4dk-k(2k-1) &\text{if }\F=\H.
        \end{cases}
\]
For any $Y \in \St_{p,q}(\F^{m+n})$,
\[
    \mathbb{T}_Y\St_{p,q}(\F^{m+n})
    =\bigl\{Z\in\F^{d\times k}:Y^*I_{m,n}Z+Z^*I_{m,n}Y=0\bigr\}.
\]
\end{proposition}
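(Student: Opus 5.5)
The plan is to apply the regular value theorem to the real-analytic map
\[
F\colon \F^{d\times k}\to \mathsf{H}^2(\F^k),\qquad F(Y)=Y^*I_{m,n}Y,
\]
so that $\St_{p,q}(\F^{m+n})=F^{-1}(I_{p,q})$. Here $\mathsf{H}^2(\F^k)$ is regarded as a real vector space of dimension $k(k+1)/2$, $k^2$, or $k(2k-1)$ for $\F=\R,\C,\H$; these count the real diagonal entries plus $\dim_\R\F$ times the $k(k-1)/2$ off-diagonal entries. Note first that the level set is nonempty, since the first $p$ columns together with columns $m+1,\dots,m+q$ of $I_d$ give an element (this uses $p\le m$, $q\le n$). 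The differential is
\[
dF_Y(Z)=Y^*I_{m,n}Z+Z^*I_{m,n}Y,
\]
which is real-linear and takes values in $\mathsf{H}^2(\F^k)$.

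The key step is to show that $dF_Y$ is surjective at every $Y\in\St_{p,q}(\F^{m+n})$. Given $H\in\mathsf{H}^2(\F^k)$, I will take
\[
Z=\tfrac12\, I_{m,n}^{-1}\,(Y^*)^{\dagger}\cdots
\]
but a cleaner choice is $Z=\tfrac12 Y I_{p,q}H$. Using $Y^*I_{m,n}Y=I_{p,q}$, we get $Y^*I_{m,n}Z=\tfrac12 I_{p,q}I_{p,q}H=\tfrac12 H$. Moreover $Z^*I_{m,n}Y=\tfrac12 H^*I_{p,q}I_{p,q}=\tfrac12 H$, since $H^*=H$ and $I_{p,q}^2=I$. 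Hence $dF_Y(Z)=H$. In the quaternionic case one must check that scalars act on the correct side. Because $\F^{d\times k}$ is treated through matrix multiplication only, with no scalar multiplication involved, the identities $(AB)^*=B^*A^*$ hold verbatim, and nothing changes.

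Surjectivity implies that $I_{p,q}$ is a regular value. The regular value theorem then shows that $\St_{p,q}(\F^{m+n})$ is an embedded submanifold. It is real-analytic because $F$ is polynomial in the real coordinates, so the real-analytic implicit function theorem applies. Its dimension is
\[
\dim_\R\F^{d\times k}-\dim_\R\mathsf{H}^2(\F^k)=dk\dim_\R\F-\dim_\R\mathsf{H}^2(\F^k),
\]
which yields the three stated formulas. The tangent space is $\ker dF_Y$, and this is exactly the displayed set.

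The only points needing care are the real dimension count of $\mathsf{H}^2(\F^k)$ over $\H$, which is $k$ real diagonal entries plus $4\cdot k(k-1)/2$ off-diagonal, giving $k(2k-1)$, and the verification that $dF_Y$ lands in $\mathsf{H}^2(\F^k)$. I expect the main obstacle to be the quaternionic noncommutativity in the surjectivity computation. It is handled by writing $Z$ as a right matrix product as above, and that step should go through.
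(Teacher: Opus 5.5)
Your proposal is correct and is essentially the paper's proof. It uses the same map $Y\mapsto Y^*I_{m,n}Y$ into $\mathsf{H}^2(\F^k)$, the same explicit preimage $Z=\tfrac12 YI_{p,q}H$ to show the differential is surjective, and the regular level set theorem, while your real-dimension count of $\mathsf{H}^2(\F^k)$ and the nonemptiness check fill in details the paper leaves implicit; just delete the abandoned half-formula $Z=\tfrac12 I_{m,n}^{-1}(Y^*)^{\dagger}\cdots$ before your actual choice.
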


\begin{proof}
Consider the real-analytic map $f:\F^{d\times k}\to \mathsf{H}^2(\F^k)$, $ Y \mapsto Y^*I_{m,n}Y$.
Then $(df)_Y(Z)=Z^*I_{m,n}Y+Y^*I_{m,n}Z$.
For $Y\in f^{-1}(I_{p,q})$ and $H\in\mathsf{H}^2(\F^k)$, taking $Z=\frac12YI_{p,q}H$ gives $Y^*I_{m,n}Z+Z^*I_{m,n}Y=H$. Therefore $(df)_Y$ is surjective. The result then follows from the regular level set theorem \cite[Theorem~9.9]{Tu}. 
\end{proof}

As is the case for the indefinite Grassmannian in Section~\ref{sec:many}, the indefinite Stiefel manifold has several characterizations. We highlight just one:
\begin{proposition}[Indefinite Stiefel manifold as homogeneous space]\label{prop:homoSt}
The group $\Iso_{m,n}(\F)$ acts transitively on $\St_{p,q}(\F^{m+n})$, giving an $\Iso_{m,n}(\F)$-equivariant diffeomorphism
\[
    \St_{p,q}(\F^{m+n})
    \cong
    \Iso_{m,n}(\F)/\Iso_{m-p,n-q}(\F).
\]
\end{proposition}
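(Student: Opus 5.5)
The plan is to imitate the proof of Proposition~\ref{prop:hom}, with Witt's extension theorem supplying transitivity and the orbit--stabilizer theorem supplying the diffeomorphism. The action is $(Q,Y)\mapsto QY$. It is well defined because $(QY)^*I_{m,n}(QY)=Y^*(Q^*I_{m,n}Q)Y=Y^*I_{m,n}Y=I_{p,q}$. It is also smooth, being the restriction of matrix multiplication.

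For transitivity, take $Y,Y'\in\St_{p,q}(\F^{m+n})$.
\begin{enumerate}[(i)]
\item Since $Y^*I_{m,n}Y=I_{p,q}$ is invertible, $Y$ has full column rank. Its column space $\W$ is therefore a $k$-dimensional subspace with $\omega$-Gram matrix $I_{p,q}$, and likewise for the column space $\W'$ of $Y'$.
\item The linear map $\varphi\colon\W\to\W'$ sending the $i$th column of $Y$ to the $i$th column of $Y'$ is an $\omega$-isometry, because both frames have Gram matrix $I_{p,q}$.
\item By Witt's extension theorem (\cite[Theorem~3.9]{Artin1957} for $\R$ and $\C$, and \cite[Theorem~5.1]{H-Witt} for $\H$), $\varphi$ extends to some $Q\in\Iso_{m,n}(\F)$. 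Then $QY=Y'$.
\end{enumerate}

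For the stabilizer, fix the base point $Y_0$ consisting of the columns $e_1,\dots,e_p,e_{m+1},\dots,e_{m+q}$, i.e., the first $k$ columns of the permutation $\Pi$ in \eqref{eq:perm}. Conjugating by $\Pi$ moves us to the form $I^{p,q}_{m,n}$, and there $Y_0$ becomes $\left[\begin{smallmatrix}I_k\\0\end{smallmatrix}\right]$.
\begin{enumerate}[(i)]
\item Write $Q\in\Iso(\F;I^{p,q}_{m,n})$ in blocks as in the proof of Proposition~\ref{prop:equi-orb}. The condition $QY_0=Y_0$ forces $Q_{11}=I_k$ and $Q_{21}=0$.
\item Comparing blocks in $Q^*I^{p,q}_{m,n}Q=I^{p,q}_{m,n}$, the $(1,2)$ block gives $I_{p,q}Q_{12}=0$, so $Q_{12}=0$. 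The $(2,2)$ block then gives $Q_{22}\in\Iso_{m-p,n-q}(\F)$.
\item Conversely, every $I_k\oplus Q_{22}$ with $Q_{22}\in\Iso_{m-p,n-q}(\F)$ lies in the group and fixes $Y_0$.
\end{enumerate}
Hence the stabilizer is a closed subgroup isomorphic to $\Iso_{m-p,n-q}(\F)$.

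The orbit--stabilizer theorem gives an equivariant smooth bijection $\Iso_{m,n}(\F)/\Iso_{m-p,n-q}(\F)\to\St_{p,q}(\F^{m+n})$. To upgrade this to a diffeomorphism, I will use that a transitive smooth action of a Lie group on a manifold makes the orbit map an equivariant diffeomorphism. This holds here because $\St_{p,q}(\F^{m+n})$ is an embedded submanifold by the preceding proposition, and the orbit map is an equivariant bijection, hence of constant rank, hence a diffeomorphism. As a consistency check, the dimensions agree: over $\R$, $\dim\O_{m,n}-\dim\O_{m-p,n-q}=\binom{d}{2}-\binom{d-k}{2}=dk-k(k+1)/2$, matching the dimension formula.

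The main obstacle is the stabilizer computation, specifically ruling out a nonzero $Q_{12}$. Stabilizing $Y_0$ on its own only fixes $\W$ pointwise. What forces $Q_{12}=0$ is the invariance of $\W^{\perp_\omega}$, which comes from the isometry condition together with the invertibility of $I_{p,q}$. Everything else is routine.
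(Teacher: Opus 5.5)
Your proposal is correct and follows the paper's proof: transitivity comes from Witt's extension theorem applied to the column-matching isometry between the column spaces, and the stabilizer of $Y_0=[e_1,\dots,e_p,e_{m+1},\dots,e_{m+q}]$ is identified with $\Iso_{m-p,n-q}(\F)$. Two points the paper asserts without detail are made explicit in your version: the block computation showing that the isometry condition forces $Q_{12}=0$, and the constant-rank argument that upgrades the orbit-map bijection to a diffeomorphism.
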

\begin{proof}
The action is matrix product, $\Iso_{m,n}(\F) \times  \St_{p,q}(\F^{m+n}) \to \St_{p,q}(\F^{m+n})$, $Y \mapsto QY$. Let
\[
 Y_0=\bigl[ e_1,\dots,e_p,e_{m+1},\dots,e_{m+q} \bigr]\in\St_{p,q}(\F^{m+n}).
\]
For $Y\in\St_{p,q}(\F^{m+n})$, there exists an $\omega$-isometry  $\varphi: \im(Y_0) \to \im(Y)$ such that the  ordered columns of $Y_0$ are mapped to the ordered columns of $Y$. By Witt's extension theorem (classical for $\R$ and $\C$ \cite[Theorem~3.9]{Artin1957} with $\H$ treated in \cite[Theorem~5.1]{H-Witt}), $\varphi$ extends to an $\omega$-isometry $Q$, and thus the action is transitive. On the other hand, let $\mathsf{H}$ be the stabilizer of $Y_0$. Then $Q \in \Iso_{m,n}(\F)$ is in $\mathsf{H}$ if and only if $Qe_i = e_i$, $i = 1,\dots, p$ and $i = m+1,\dots,m+q$. Hence $\mathsf{H} \cong \Iso_{m-p,n-q}(\F)$.
\end{proof}

The indefinite Stiefel manifold and Grassmannian are naturally tied as the total and base spaces in a principal bundle:
\begin{proposition}[Homogeneous fibration]
The map
\[
    \pi:\St_{p,q}(\F^{m+n})\to \Gr_{p,q}(\F^{m+n}),
    \quad Y \mapsto \im(Y),
\]
is a principal $\Iso_{p,q}(\F)$-bundle. Equivalently, it is the homogeneous fibration
\begin{equation}\label{eq:fibration}
    \Iso_{p,q}(\F) \to
    \frac{\Iso_{m,n}(\F)}{\Iso_{m-p,n-q}(\F)} \to
    \frac{\Iso_{m,n}(\F)}{\Iso_{p,q}(\F)\times\Iso_{m-p,n-q}(\F)}.
\end{equation}
\end{proposition}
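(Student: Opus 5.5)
The plan is to make $\Iso_{p,q}(\F)$ act on $\St_{p,q}(\F^{m+n})$ on the right by $Y \cdot g \coloneqq Yg$, and then verify three things: the action preserves the fibers of $\pi$, it is free and transitive on each fiber, and $\pi$ has smooth local sections. Those three facts together make $\pi$ a principal $\Iso_{p,q}(\F)$-bundle. The identification with \eqref{eq:fibration} should then follow from the two homogeneous space descriptions, Propositions~\ref{prop:hom} and \ref{prop:homoSt}.

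First, $\pi$ is well defined and smooth. If $Y^*I_{m,n}Y = I_{p,q}$, then $Y$ has full column rank $k$, and the Gram matrix of $\omega_{m,n}$ restricted to $\im(Y)$ in the basis given by the columns of $Y$ is $I_{p,q}$. Hence $\im(Y) \in \Gr_{p,q}(\F^{m+n})$. Smoothness is easiest in the projector model of Proposition~\ref{prop:proj}: $\pi(Y)$ corresponds to $P = Y(Y^*Y)^{-1}Y^*$, which is real analytic in $Y$.

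Next, the fiber structure. For $g \in \Iso_{p,q}(\F)$ we have $(Yg)^*I_{m,n}(Yg) = g^*I_{p,q}g = I_{p,q}$ and $\im(Yg) = \im(Y)$, so the action preserves fibers. It is free because $Yg = Y$ together with injectivity of $Y$ forces $g = I$. For transitivity on fibers, suppose $\im(Y) = \im(Y')$. Then $Y' = Yg$ for a unique $g \in \GL_k(\F)$. Computing $I_{p,q} = Y'^*I_{m,n}Y' = g^*I_{p,q}g$ shows $g \in \Iso_{p,q}(\F)$.

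For local triviality, fix $\W_0 = \pi(Y_0)$. On the open neighborhood $U = \{\W : \W \cap \W_0^{\perp_\omega} = 0\}$, define $Z(\W) \coloneqq P^\omega_{\W} Y_0$, where $P^\omega_\W$ is the $I_{m,n}$-orthogonal projector onto $\W$ from Proposition~\ref{prop:i-proj}. Then $Z(\W)$ is a smooth frame of $\W$ with Gram matrix $G(\W) = Z^*I_{m,n}Z$. This matrix is Hermitian, has signature $(p,q,0)$, and equals $I_{p,q}$ at $\W_0$.

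This normalization is the step I expect to be the main obstacle: turning $Z$ into an $\omega$-orthonormal frame smoothly. The fix is to find a smooth map $S(\W)$ with $S^*GS = I_{p,q}$ and $S(\W_0)=I$. I would obtain $S$ from the implicit function theorem, applied to $S \mapsto S^*GS$ near $(G,S)=(I_{p,q},I)$; its differential $\dot S \mapsto \dot S^* I_{p,q} + I_{p,q}\dot S$ is surjective onto $\mathsf{H}^2(\F^k)$, exactly as in the regular level set argument for the Stiefel manifold. A concrete alternative is $S = (I_{p,q}G)^{-1/2}$ via the principal square root near $I$. With $\sigma(\W) \coloneqq Z(\W)S(\W)$, the map $U \times \Iso_{p,q}(\F) \to \pi^{-1}(U)$, $(\W,g) \mapsto \sigma(\W)g$, is an equivariant diffeomorphism. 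Its inverse is $Y \mapsto (\pi(Y), \sigma(\pi(Y))^{-1}Y)$, where the left inverse is $\sigma^{-1}Y = I_{p,q}\sigma^*I_{m,n}Y$.

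Finally, for \eqref{eq:fibration}, take the base point $Y_0 = [e_1,\dots,e_p,e_{m+1},\dots,e_{m+q}]$ and $\W_0 = \im(Y_0)$. By Propositions~\ref{prop:homoSt} and \ref{prop:hom}, $\pi$ corresponds to the natural quotient map $\Iso_{m,n}(\F)/\Iso_{m-p,n-q}(\F) \to \Iso_{m,n}(\F)/(\Iso_{p,q}(\F)\times\Iso_{m-p,n-q}(\F))$, because $\pi(QY_0) = Q\W_0$. The fiber over the base point is $(\Iso_{p,q}(\F)\times\Iso_{m-p,n-q}(\F))/\Iso_{m-p,n-q}(\F) \cong \Iso_{p,q}(\F)$. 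Under this identification, the right action of $\Iso_{p,q}(\F)$ corresponds to right multiplication by the first block factor.
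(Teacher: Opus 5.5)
Your proof is correct. Its fiber-level part is exactly the paper's: the right action $Y\mapsto Yg$, freeness, and transitivity on fibers via the unique $g\in\GL_k(\F)$ with $Y'=Yg$, which comparing Gram matrices forces into $\Iso_{p,q}(\F)$.

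The genuine difference is local triviality. The paper constructs nothing there. It invokes the standard fact that the homogeneous fibration \eqref{eq:fibration} is a smooth principal $K/H\cong\Iso_{p,q}(\F)$-bundle, with $H=\Iso_{m-p,n-q}(\F)$ normal in $K=\Iso_{p,q}(\F)\times\Iso_{m-p,n-q}(\F)$; this ultimately rests on local sections of $G\to G/K$. It then implicitly transports this to $\pi$ through Propositions~\ref{prop:hom} and \ref{prop:homoSt}.

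You instead write down explicit local sections $\sigma(\W)=P^\omega_\W Y_0\,S(\W)$, an indefinite analogue of L\"owdin (polar) orthonormalization. This makes your argument self-contained and gives concrete trivializations. It also turns the identification with \eqref{eq:fibration} into a consequence, via $\pi(QY_0)=Q\W_0$ and your matching of right actions, rather than the source of local triviality. The paper's route is shorter but leans entirely on general Lie theory.

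Three details deserve a line each.
\begin{enumerate}[(i)]
\item $Z(\W)$ has full rank on $U$ because $\ker\bigl(P^\omega_\W\vert_{\W_0}\bigr)=\W_0\cap\W^{\perp_\omega}$. This vanishes if and only if $\W\cap\W_0^{\perp_\omega}=0$; to see this, take $\omega$-complements and count dimensions.
\item The implicit function theorem needs an isomorphism, not just a surjection. So restrict $S$ to the slice $I+I_{p,q}\mathsf{H}^2(\F^k)$, on which the differential is $I_{p,q}H\mapsto 2H$.
\item Your square-root formula works because $M\coloneqq I_{p,q}G$ is $I_{p,q}$-Hermitian. Hence so is $S=M^{-1/2}$, being a real power series in $M$, which gives $S^*GS=I_{p,q}SMS=I_{p,q}$. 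You must first shrink $U$ so that $M$ stays in the domain of the principal root.
\end{enumerate}
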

\begin{proof}
Right multiplication $\Iso_{p,q}(\F) \times \St_{p,q}(\F^{m+n}) \to \St_{p,q}(\F^{m+n})$, $(Q,Y) \mapsto YQ$ defines a group action  as $(YQ)^*I_{m,n}(YQ)=Q^*I_{p,q}Q=I_{p,q}$.
It follows that $\pi(YQ) = \pi(Y)$ for all $Q\in\Iso_{p,q}(\F)$ and the action is free. If $\im(Y) = \im(Y')$, then there exists a
unique $Q\in\operatorname{GL}_k(\F)$ with $Y'=YQ$.  Since the Gram matrices of $Y$ and $Y'$ are equal on $\im(Y) = \im(Y')$, we get $Q^*I_{p,q}Q=I_{p,q}$, i.e., $Q\in\Iso_{p,q}(\F)$.
For local triviality, the standard homogeneous fibration \eqref{eq:fibration} is a smooth locally trivial principal bundle.
\end{proof}

\section{Geometry of the indefinite Grassmannian}\label{sec:geo}

The indefinite Grassmannian has rich geometry, endowed with a natural pseudo-Riemannian metric $g$, which turns it into not just a pseudo-Riemannian manifold but a symmetric space as well as an Einstein manifold. Over $\C$, it has, in addition, a natural complex structure $\mathcal{J}$ that together with $g$ makes it a pseudo-K\"ahler manifold. By ``natural'' we mean that both $g$ and $\mathcal{J}$ are $\Iso_{m,n}(\F)$-invariant.

To show that the indefinite Grassmannian is an Einstein manifold, we need its Ricci curvature. We will give explicit expression for this as well as other common intrinsic curvatures --- Riemann, sectional, scalar --- using the characterization of an indefinite Grassmannian as an adjoint orbit of $\Iso_{m,n}(\F)$ in Proposition~\ref{prop:equi-orb}. We will also provide one important extrinsic curvature, the second fundamental form of the embedding in Propositions~\ref{prop:equi-orb} and \ref{prop:i-proj}.

The characterization of $\Gr_{p,q}(\F^{m+n})$  in Proposition~\ref{prop:equi-orb} is the most convenient form for our purposes in this section. With an approriate pseudo-Riemannian metric on $\Gr_{p,q}^{\lambda ,\mu }(\F^{m+n})$, the diffeomorphism  $\Gr_{p,q}(\F^{m+n}) \cong \Gr_{p,q}^{\lambda ,\mu }(\F^{m+n})$  in  \eqref{eq:equiv-model} is an isometry of pseudo-Riemannian manifolds. In the following, we remind readers of the $\omega$-adjoint $\dagger$ in Section~\ref{sec:ila}.

\begin{proposition}[Pseudo-Riemannian metric on the indefinite Grassmannian]\label{prop:pseudo} \;
\begin{enumerate}[\upshape (i)]
\item The tangent space at $\W \in \Gr_{p,q}(\F^{m+n})$ is $\mathbb{T}_{\W}\Gr_{p,q}(\F^{m+n}) = \Hom_{\F}(\W,\W^{\perp_\omega})$ and 
\[
g_{\W}(A,B)
\coloneqq
\Re\tr(A^\dagger B),
\quad
A,B\in\Hom_{\F}(\W,\W^{\perp_\omega}),
\]
defines an $\Iso_{m,n}(\F)$-invariant pseudo-Riemannian metric
on $\Gr_{p,q}(\F^{m+n})$ with signature
\begin{equation}\label{eq:sign}
\bigl(
\dim_{\R}\F\,[p(m-p)+q(n-q)],
\dim_{\R}\F\,[p(n-q)+q(m-p)]
\bigr).
\end{equation}

\item The tangent space at $X\in\Gr_{p,q}^{\lambda ,\mu }(\F^{m+n})$ is
\begin{equation}\label{eq:tangent}
    \mathbb{T}_X\Gr_{p,q}^{\lambda ,\mu }(\F^{m+n})
=
\bigl\{
    Z\in\F^{d\times d}:
    I_{m,n}^{p,q}Z=Z^* I_{m,n}^{p,q},\,
    XZ+ZX=(\lambda +\mu )Z
\bigr\}
\end{equation}
and the real bilinear form
\[
g_X(Z_1,Z_2)
\coloneqq
\frac{\Re\tr(Z_1Z_2)}{2(\lambda -\mu )^2}
\]
defines an $\Iso(\F; I_{m,n}^{p,q})$-invariant pseudo-Riemannian metric on
$\Gr_{p,q}^{\lambda ,\mu }(\F^{m+n})$ with signature \eqref{eq:sign}.

\item Let $P_{\W}^{\omega}$ denote the $\omega$-orthogonal projection onto $\W$ along $\W^{\perp_\omega}$. Then the diffeomorphism \eqref{eq:equiv-model} may be chosen to be
\[
\Phi_{\lambda,\mu}:
\Gr_{p,q}(\F^{m+n})
\longrightarrow
\Gr_{p,q}^{\lambda,\mu}(\F^{m+n}),
\quad
\W\longmapsto
\lambda P_{\W}^{\omega}
+\mu(I-P_{\W}^{\omega}),
\]
which is an $\Iso_{m,n}(\F)$-equivariant isometry.
\end{enumerate}
\end{proposition}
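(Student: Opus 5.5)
We prove (i) directly, then transport it to the matrix model using the explicit map in (iii), and finally check that (ii) agrees with the pushforward.

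\emph{Part (i).} Since $\Gr_{p,q}(\F^{m+n})$ is open in $\Gr_k(\F^d)$ by Proposition~\ref{prop:open}, its tangent space at $\W$ is $\Hom_\F(\W,\F^d/\W)$. Nondegeneracy of $\omega|_\W$ gives $\F^d=\W\oplus\W^{\perp_\omega}$, so $\F^d/\W\cong\W^{\perp_\omega}$ and the tangent space is $\Hom_\F(\W,\W^{\perp_\omega})$.

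Concretely, curves $t\mapsto \W_t=\{u+tAu+O(t^2): u\in\W\}$ realize $A$. An isometry $Q\in\Iso_{m,n}(\F)$ acts by $A\mapsto QAQ^{-1}$. Because $\dagger$ is defined through $\omega$, we have $(QAQ^{-1})^\dagger=QA^\dagger Q^{-1}$, so $\Re\tr(A^\dagger B)$ is invariant.

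Symmetry follows from $\Re\tr(A^\dagger B)=\Re\tr((B^\dagger A)^*)$ in an $\omega$-orthonormal basis, using $\Re\tr(M^*)=\Re\tr(M)$ (valid also over $\H$). Smoothness follows from invariance and transitivity (Proposition~\ref{prop:hom}).

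For the signature, choose $\omega$-orthonormal bases of $\W$ and $\W^{\perp_\omega}$ with Gram matrices $I_{p,q}$ and $I_{m-p,n-q}$. Then
\[
g(A,B)=\Re\tr(I_{p,q}A^*I_{m-p,n-q}B)=\sum_{i,j}\epsilon_i\epsilon'_j\Re(\overline{a_{ji}}b_{ji}),
\]
a diagonal form in the real coordinates of the entries. An entry is positive exactly when both signs agree, giving $\dim_\R\F\,[p(m-p)+q(n-q)]$ positive and $\dim_\R\F\,[p(n-q)+q(m-p)]$ negative directions. This yields \eqref{eq:sign}.

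\emph{Parts (ii) and (iii).} Set $X=\Phi_{\lambda,\mu}(\W)=\mu I+(\lambda-\mu)P^\omega_\W$. Equivariance holds because $QP^\omega_\W Q^{-1}=P^\omega_{Q\W}$. The map $\Phi_{\lambda,\mu}$ sends the base point to $\lambda I_k\oplus\mu I_{d-k}$ (after conjugating by $\Pi$ from \eqref{eq:perm}), so by Proposition~\ref{prop:equi-orb} it is the orbit diffeomorphism.

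Differentiating $P_t^2=P_t$ and $I P_t=P_t^*I$ (with $I=I^{p,q}_{m,n}$) gives $Z=(\lambda-\mu)\dot P$. This $Z$ is $I$-Hermitian and satisfies $PZ+ZP=Z$. The relation $XZ+ZX=(\lambda+\mu)Z$ follows by substitution. A dimension count then shows that \eqref{eq:tangent} is exactly the tangent space: in block form, $XZ+ZX=(\lambda+\mu)Z$ forces $Z=\begin{bsmallmatrix}0&Z_{12}\\Z_{21}&0\end{bsmallmatrix}$, and $I$-Hermitian forces $Z_{12}=I_{p,q}Z_{21}^*I_{m-p,n-q}=Z_{21}^\dagger$. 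This leaves one free block $Z_{21}\in\F^{(d-k)\times k}$, which matches \eqref{eq:dim}.

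The differential is $d\Phi(A)=(\lambda-\mu)(A+A^\dagger)$, where $A\colon\W\to\W^{\perp_\omega}$ and $A^\dagger\colon\W^{\perp_\omega}\to\W$, i.e.\ $Z_{21}=(\lambda-\mu)A$. Then
\[
\tr(Z_1Z_2)=(\lambda-\mu)^2\bigl[\tr(A_1^\dagger A_2)+\tr(A_1A_2^\dagger)\bigr],
\]
whose real part is $2(\lambda-\mu)^2\Re\tr(A_1^\dagger A_2)$. So $g_X$ is the pushforward of $g_\W$; in particular it is an invariant pseudo-Riemannian metric of the same signature, and $\Phi_{\lambda,\mu}$ is an isometry.

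The main obstacle is the first-order computation of $d\Phi$: showing that the derivative of $P^\omega_{\W_t}$ is exactly $A+A^\dagger$. The cleanest route is to do it at the base point, where $P=I_k\oplus 0$ and $\W_t$ is the graph of $tA$. Solving for the $I$-orthogonal projector onto the graph to first order gives off-diagonal blocks $tA$ and $tA^\dagger$. The general point then follows by equivariance, which also sidesteps the quaternionic noncommutativity issues in the trace identities.
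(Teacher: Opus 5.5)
Your proof is correct. Its skeleton matches the paper's: identify the tangent spaces, read off the signature in adapted $\omega$-orthonormal blocks, compute $d\Phi_{\lambda,\mu}(A)=(\lambda-\mu)\begin{bsmallmatrix}0&A^\dagger\\A&0\end{bsmallmatrix}$, then compare traces. Several individual steps, however, are done differently.

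\textbf{Which model comes first.} The paper works primarily in the matrix model. It proves nondegeneracy and the signature at $X_0=\lambda I_k\oplus\mu I_{d-k}$, and obtains (i) only as a consequence of the isometry in (iii). You prove (i) intrinsically, via openness in $\Gr_k(\F^d)$, the splitting $\F^d=\W\oplus\W^{\perp_\omega}$, and $(QAQ^{-1})^\dagger=QA^\dagger Q^{-1}$, and then push it forward. As a result, the identification $\mathbb{T}_\W\Gr_{p,q}(\F^{m+n})=\Hom_\F(\W,\W^{\perp_\omega})$ and the invariance in (i) are actually proved rather than inherited.

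\textbf{The inclusion ``$\supseteq$'' in \eqref{eq:tangent}.} The paper writes $Z_0=[K_0,X_0]$ with $K_0\in\mathfrak{iso}(\F;I^{p,q}_{m,n})$ and exhibits the curve $\exp(tK)X\exp(-tK)$. You use a dimension count instead. Your formula for $d\Phi_{\lambda,\mu}$ is onto the off-diagonal $I$-Hermitian block matrices, so it already gives ``$\supseteq$'' on its own, without the count.

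\textbf{Computing $d\Phi_{\lambda,\mu}$.} The paper uses equivariance. The curve $\exp\bigl(t\begin{bsmallmatrix}0&-A^\dagger\\A&0\end{bsmallmatrix}\bigr)\W$ has velocity $A$, so $d\Phi_{\lambda,\mu}(A)$ is a commutator with $\Phi_{\lambda,\mu}(\W)$ and no projector ever has to be computed. You instead expand the $\omega$-orthogonal projector onto the graph of $tA$ to first order. This is more elementary and self-contained, and I checked that it does produce the off-diagonal blocks $tA$ and $tA^\dagger$.

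\textbf{What each route buys.} The paper's Lie-algebraic curves pay off later: they are exactly the geodesics $\exp(tK_X(Z))X\exp(-tK_X(Z))$ of Proposition~\ref{prop:geodesics-curvature}. Your route gives a proof of (i) that does not depend on any choice of matrix model.
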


\begin{proof}
By \eqref{eq:equiv-model}, differentiating  $I_{m,n}^{p,q}X=X^*I_{m,n}^{p,q}$ and $(X-\lambda I)(X-\mu I)=0$ on a smooth curve through $X$ gives ``$\subseteq$'' in \eqref{eq:tangent}. The main effort is to show ``$\supseteq$'' in \eqref{eq:tangent}. By \eqref{eq:equiv-model}, for $X \in \Gr_{p,q}^{\lambda,\mu}(\F^{m+n})$ there exists $Q \in \Iso(\F; I_{m,n}^{p,q})$ such that $Q^{-1}XQ = \lambda I_k \oplus \mu I_{d-k} \eqqcolon X_0$. Let $Z$ be as in the right-hand side of \eqref{eq:tangent} and $Z_0\coloneqq Q^{-1}ZQ$.
Then
\[
I_{m,n}^{p,q} Z_0 = Z_0^* I_{m,n}^{p,q}, \quad (\lambda  I_k \oplus \mu  I_{d-k})
Z_0
+
Z_0
(\lambda  I_k \oplus \mu  I_{d-k})
=
(\lambda +\mu) Z_0.
\]
Write $Z_0=
\begin{bsmallmatrix}
    Z_{11}&Z_{12}\\
    Z_{21}&Z_{22}
\end{bsmallmatrix}$. Then the second equation above gives $Z_{11} = 0$ and $Z_{22} = 0$, as  $\lambda \ne\mu $. The first equation above gives $Z_{12}=Z_{21}^{\dagger}$. Therefore $Z_0=
\bigl[\begin{smallmatrix}
    0 &B^\dagger\\
    B &0
\end{smallmatrix} \bigr]$ for some $B\in\F^{(d-k)\times k}$. Let
\[ K_0 \coloneqq \frac{1}{\lambda -\mu }
\begin{bmatrix}
    0&-B^\dagger\\
    B&0
\end{bmatrix}.
\]
Then $K_0^*(I_{p,q}\oplus I_{m-p,n-q}) = -(I_{p,q}\oplus I_{m-p,n-q})K_0$ and $Z_0 =[K_0, \lambda  I_k \oplus \mu  I_{d-k}]$; and for $K \coloneqq Q K_0 Q^{-1}$ we have $Z=[K,X]$. Finally, differentiating the curve $ t\mapsto \exp(tK)X\exp(-tK)$ gives us $Z \in \mathbb{T}_X \Gr_{p,q}^{\lambda ,\mu }(\F^{m+n})$. 

Clearly, $g_X$ is a symmetric real bilinear form. For $Z_1$, $Z_2 \in \mathbb{T}_{X_0} \Gr_{p,q}^{\lambda ,\mu }(\F^{m+n})$, write
\[
    Z_i = \biggl[
    \begin{matrix}
        0&B_i^\dagger\\
        B_i&0
    \end{matrix}\biggr],
    \quad i=1,2.
\]
Then $g_{X_0}(Z_1,Z_2) = \Re\tr(B_1^\dagger B_2)/ (\lambda -\mu )^2$ as $\Re\tr(Z_1Z_2)= 2\Re\tr(B_1^\dagger B_2)$. Write $B=
\begin{bsmallmatrix}
    B_{11}&B_{12}\\
    B_{21}&B_{22}
\end{bsmallmatrix}$ with $B_{11}\in\F^{(m-p)\times p}$, $B_{12}\in\F^{(m-p)\times q}$, $B_{21}\in\F^{(n-q)\times p}$, $B_{22}\in\F^{(n-q)\times q}$. Then
\[
\begin{aligned}
    g_{X_0}(Z,Z)
    =
    \frac{1}{(\lambda -\mu )^2}
    \bigl(
        \lVert B_{11}\rVert_\fb ^2
        +\lVert B_{22}\rVert_\fb ^2
        -\lVert B_{12}\rVert_\fb ^2
        -\lVert B_{21}\rVert_\fb ^2
    \bigr).
\end{aligned}
\]
The restriction of $g_{X_0}$ to $\F^{(m-p)\times p}\oplus\F^{(n-q)\times q}$ is positive while its restriction to $\F^{(m-p)\times q}\oplus\F^{(n-q)\times p}$ is negative; so $g_{X_0}$ is nondegenerate with signature  \eqref{eq:sign}. Lastly, we check that $ g_X(Z_1,Z_2) = g_{UXU^{-1}} (UZ_1U^{-1},UZ_2U^{-1})$ for all $U\in\Iso(\F; I_{m,n}^{p,q})$.

Since $P_{U\W}^{\omega} = UP_{\W}^{\omega}U^{-1}$ for all $U \in \Iso(\F;I_{m,n}^{p,q})$, $\Phi_{\lambda,\mu}$ is $\Iso(\F;I_{m,n}^{p,q})$-equivariant. Routine calculation shows that it is a diffeomorphism with inverse $\Phi_{\lambda,\mu}^{-1}(X) = \ker(X-\lambda I)$. The map $
t \mapsto \exp\bigl(t \bigl[\begin{smallmatrix}
    0 &-A^\dagger \\ A &0
\end{smallmatrix} \bigr] \bigr)\W
$ defines a curve at $\W\in\Gr_{p,q}(\F^{m+n})$ with tangent vector $A \in \Hom_{\F}(\W,\W^{\perp_\omega})$. Consequently, by the equivariance of $\Phi_{\lambda,\mu}$,
\[
(d\Phi_{\lambda,\mu})_{\W}(A) = \biggl[\begin{bmatrix}
    0 &-A^\dagger \\ A &0
\end{bmatrix}, \Phi_{\lambda,\mu}(\W)\biggr] = (\lambda - \mu ) \begin{bmatrix}
    0 &A^\dagger \\ A &0
\end{bmatrix}.
\]
Thus, for $A,B\in\Hom_{\F}(\W,\W^{\perp_\omega})$,
\[
g_X\bigl(
(d\Phi_{\lambda,\mu})_{\W}(A),
(d\Phi_{\lambda,\mu})_{\W}(B)
\bigr)
=
\frac{1}
     {2(\lambda-\mu)^2}\Re\tr
     \biggl((\lambda - \mu ) \begin{bmatrix}
    0 &A^\dagger \\ A &0
\end{bmatrix}(\lambda - \mu ) \begin{bmatrix}
    0 &B^\dagger \\ B &0
\end{bmatrix}\biggr) = g_{\W}(A,B),
\]
as required.
\end{proof}

To avoid clutter, we identify $\Gr_{p,q}(\F^{m+n})$ with $\Gr_{p,q}^{\lambda ,\mu }(\F^{m+n})$ below, noting that all geometric properties hold irrespective of the model used but that the explicit expressions assume the model $\Gr_{p,q}^{\lambda ,\mu }(\F^{m+n})$ in \eqref{eq:equiv-model}. In the rest of this section, $\lambda, \mu \in \R$ are fixed, distinct, and arbitrary. One might also use the model in Proposition~\ref{prop:i-proj}, i.e., fix $(\lambda, \mu) = (1, 0)$, to eliminate these constants.

\begin{proposition}\label{prop:geodesics-curvature}
For $X\in \Gr_{p,q}(\F^{m+n})$ and $Z\in
\mathbb{T}_X\Gr_{p,q}(\F^{m+n})$, let $K_X(Z)\coloneqq [Z,X]/(\lambda -\mu )^2$. Then, with respect to the metric $g$ in Proposition~\ref{prop:pseudo},
\begin{enumerate}[\normalfont(i)]
    \item $\Gr_{p,q}(\F^{m+n})$ is a pseudo-Riemannian symmetric space;
    \item\label{it:geodesic} $K_X(Z)\in \mathfrak{iso}_{m,n}(\F; I_{m,n}^{p,q})$ and the unique geodesic with 
    $\gamma(0)=X$ and $\dot\gamma(0)=Z$ is 
\[
        \gamma_{X,Z}(t)=\exp\bigl(tK_X(Z)\bigr) X \exp\bigl(-tK_X(Z)\bigr);
\]
    \item $\Gr_{p,q}(\F^{m+n})$ is geodesically complete;
    \item\label{it:rie} the Riemann curvature  is
    \[
    \mathsf{Rie}_X(U,V,W,Z) =\frac{1}{2(\lambda-\mu)^4}\Re \tr([[U,V],W]Z)
    \]
    for all $U$, $V$, $W$, $Z \in
    \mathbb{T}_X\Gr_{p,q}(\F^{m+n})$.
\end{enumerate} 
\end{proposition}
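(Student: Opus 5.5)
We treat $\Gr_{p,q}(\F^{m+n})$ as the reductive homogeneous space $\G/\mathsf{H}$ with $\G = \Iso(\F; I_{m,n}^{p,q})$ and $\mathsf{H} = \Iso_{p,q}(\F)\times\Iso_{m-p,n-q}(\F)$, as in Proposition~\ref{prop:equi-orb}. All computations are done at the base point $X_0 = \lambda I_k\oplus\mu I_{d-k}$ and transported by the $\Iso(\F;I_{m,n}^{p,q})$-equivariance and invariance of $g$ from Proposition~\ref{prop:pseudo}.

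\emph{Cartan decomposition and (i).} Let $\sigma(Q) = SQS^{-1}$ with $S \coloneqq I_k \oplus (-I_{d-k})$. This is an involutive automorphism of $\G$, since $S$ commutes with $I_{m,n}^{p,q}$, and its fixed-point group is exactly the block-diagonal stabilizer $\mathsf{H}$ identified in the proof of Proposition~\ref{prop:equi-orb}. The Lie algebra splits as $\mathfrak{g} = \mathfrak{h}\oplus\mathfrak{m}$, where $\mathfrak{h}$ consists of block-diagonal elements and
\[
\mathfrak{m} = \Bigl\{\begin{bsmallmatrix}0&-B^\dagger\\ B&0\end{bsmallmatrix}\Bigr\},
\]
with $[\mathfrak{h},\mathfrak{m}]\subseteq\mathfrak{m}$ and $[\mathfrak{m},\mathfrak{m}]\subseteq\mathfrak{h}$. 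By the proof of Proposition~\ref{prop:pseudo}, the map $K\mapsto [K,X_0]$ is a linear isomorphism $\mathfrak{m}\to\mathbb{T}_{X_0}$, and its inverse is $Z\mapsto [Z,X_0]/(\lambda-\mu)^2$. To check this, take $Z = \begin{bsmallmatrix}0&B^\dagger\\ B&0\end{bsmallmatrix}$; then $[Z,X_0] = (\lambda-\mu)\begin{bsmallmatrix}0&-B^\dagger\\ B&0\end{bsmallmatrix}$. This identifies $K_X(Z)$ as the $\mathfrak{m}$-component lifting $Z$ and gives $K_X(Z)\in\mathfrak{iso}$. The symmetry at $X_0$ is $X\mapsto SXS^{-1}$. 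It fixes $X_0$, acts as $-1$ on $\mathbb{T}_{X_0}$ because it conjugates off-diagonal blocks by a sign, and is an isometry because $g$ is given by a trace. Since $\G$ acts transitively by isometries, every point has such a symmetry, which proves (i).

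\emph{Geodesics and completeness, (ii)--(iii).} Here I would use the standard fact for symmetric (more generally naturally reductive) pseudo-Riemannian homogeneous spaces: the geodesics through the base point are the orbits $\exp(tK)\cdot X_0$ with $K\in\mathfrak{m}$. The metric on $\mathfrak{m}$ is the restriction of the $\mathrm{Ad}(\mathsf{H})$-invariant form $\Re\tr$, up to scale, and this form is $\mathrm{ad}$-invariant on all of $\mathfrak{g}$, so natural reductivity holds. For a self-contained argument one can instead verify the geodesic equation directly in the ambient model. Set $\gamma(t) = \exp(tK)X_0\exp(-tK)$; then $\ddot\gamma = [K,[K,\gamma]]$, and the claim is that this is $g$-normal to $\mathbb{T}_\gamma$. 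At $t=0$, $[K,[K,X_0]]$ is block-diagonal because $[\mathfrak{m},\mathfrak{m}]\subseteq\mathfrak{h}$ and $\mathfrak{h}$ preserves the block structure. Tangent vectors are block-off-diagonal, so $\Re\tr$ of the product vanishes. This is not literally the Levi-Civita geodesic condition, because the ambient bilinear form $\Re\tr(Z_1Z_2)$ on $\mathsf{H}^2(\F^d;I_{m,n}^{p,q})$ is itself nondegenerate (indefinite). We therefore use that $\mathbb{T}_X$ is nondegenerate, so the ambient space splits orthogonally, and the Levi-Civita connection is the tangential projection of the ambient derivative by the usual Gauss-formula argument for pseudo-Riemannian submanifolds. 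The computation at a general $t$ then follows by equivariance. Uniqueness is standard ODE uniqueness. Because $\gamma$ is defined for all $t\in\R$, (iii) follows immediately.

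\emph{Curvature, (iv).} For a symmetric space one has $R(u,v)w = -[[u,v],w]$ for $u,v,w\in\mathfrak{m}$, with a sign depending on convention. I would transport this through the isomorphism $Z\mapsto K_{X_0}(Z)$, which carries $\mathfrak{m}$ to $\mathbb{T}_{X_0}$ with $\mathrm{ad}_{X_0}$-type scalings. The step I expect to be the main obstacle is bookkeeping the constants. Each $K$ contributes a factor $1/(\lambda-\mu)^2$ and produces $(\lambda-\mu)$ when commuted with $X_0$, and together with the normalization $1/(2(\lambda-\mu)^2)$ in $g$ these must combine to $1/(2(\lambda-\mu)^4)$. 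A useful check is that $[[U,V],W]$ for $U,V,W\in\mathbb{T}_{X_0}$ is again block-off-diagonal, so it lies in the tangent space. I would verify the constant, and fix the sign convention for $\mathsf{Rie}$, on a $2\times 2$ example with $\F=\R$, $d=2$, $k=1$, where the space is a hyperbolic or de Sitter curve. Finally, equivariance carries the formula from $X_0$ to arbitrary $X$.
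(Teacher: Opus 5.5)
Your proposal follows the paper's proof. Your involution $S=I_k\oplus(-I_{d-k})$ is exactly the paper's $Q_{X_0}=(2X_0-(\lambda+\mu)I)/(\lambda-\mu)$, and (ii)--(iv) come, as in the paper, from the standard symmetric-space facts: geodesics $\exp(tK)X_0\exp(-tK)$ with $K\in\mathfrak{m}$, and $\mathsf{R}(U,V)W=-[[K_X(U),K_X(V)],W]$. Your Gauss-formula check of the geodesics is a harmless extra. There, block-diagonality of $[K,[K,X_0]]$ holds simply because products of block-off-diagonal matrices are block-diagonal; $[K,X_0]$ is not in $\mathfrak{m}$.

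The one real loose end is the constant in (iv), and the test case you propose cannot settle it. With $\F=\R$, $d=2$, $k=1$ the space is one-dimensional, so both sides vanish identically. Do the computation directly instead. For tangent $U$ one has $SU=-US$ and $[U,X_0]=(\lambda-\mu)US$, hence
\[
[K_{X_0}(U),K_{X_0}(V)]=-\frac{[U,V]}{(\lambda-\mu)^2}.
\]
This bracket $H$ is block-diagonal and commutes with $X_0$, so $[[H,K],X_0]=[H,[K,X_0]]$. That gives $\mathsf{R}(U,V)W=[[U,V],W]/(\lambda-\mu)^2$, and pairing with $g$ yields the stated $\mathsf{Rie}$.
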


\begin{proof}
We have $K_X(Z)\in \mathfrak{iso}(\F; I_{m,n}^{p,q})$ as
\begin{equation}\label{eq:commutator}
    [Z,X]^* I_{m,n}^{p,q} = I_{m,n}^{p,q}[X,Z].
\end{equation}
By \eqref{eq:equiv-model} and \eqref{eq:tangent},
\begin{equation}\label{eq:geo1}
    XZ + ZX = (\lambda  + \mu )Z \quad \text{and} \quad X^2 = (\lambda + \mu )X - \lambda \mu  I.
\end{equation}
It follows that
\begin{equation}\label{eq:geo2}
    [[Z,X],X]=ZX^2-2XZX+X^2Z=(\lambda -\mu )^2Z
\end{equation}
and hence $[K_X(Z),X]=Z$. Let $Q_X \coloneqq (2X-(\lambda +\mu )I)/(\lambda -\mu )$. Then, by \eqref{eq:geo1}, $Q_X^2 =I$. 
Moreover, by \eqref{eq:equiv-model}, $Q_X^*I_{m,n}^{p,q}=I_{m,n}^{p,q}Q_X$, and consequently, $Q_X\in\Iso(\F;I_{m,n}^{p,q})$. The map
$\sigma_X:
\Gr_{p,q}(\F^{m+n})
\to
\Gr_{p,q}(\F^{m+n})$, $Y \mapsto Q_XYQ_X$ is thus an isometry.  It is also an involution as $Q_X^2=I$. Since $Q_X$ is a polynomial in $X$, $\sigma_X(X)=Q_XXQ_X=X$. Furthermore, for
$Z\in \mathbb{T}_X\Gr_{p,q}(\F^{m+n})$, $Q_XZ+ZQ_X=0$, so $(d\sigma_X)_X(Z) = Q_XZQ_X =-Z$. Thus $\sigma_X$ is the geodesic symmetry at $X$, and
$\Gr_{p,q}(\F^{m+n})$ is a
pseudo-Riemannian symmetric space. For such a space, the canonical connection coincides with the Levi--Civita connection \cite[Theorem~2.1.2]{Pseudo}. The expressions for geodesics and curvature tensor thus follow from \cite[Corollary~1.4.13 and Proposition~1.4.14]{Pseudo}. In particular, since the expression in \ref{it:geodesic} is defined for all $t\in\R$, the
metric is geodesically complete. As a map  $\mathsf R_X: \mathbb{T}_X\Gr_{p,q}(\F^{m+n}) \times \mathbb{T}_X\Gr_{p,q}(\F^{m+n}) \to \End(\mathbb{T}_X\Gr_{p,q}(\F^{m+n}))$, the Riemann curvature is
\[
\mathsf R_X(U,V)W 
    = -\bigl[[K_X(U),K_X(V)],W\bigr] 
    = \frac{1}{(\lambda-\mu)^2}[[U,V],W],
\]
where $U$, $V$, $W\in    \mathbb{T}_X\Gr_{p,q}(\F^{m+n})$, from which we obtain the equivalent expression in \ref{it:rie}.
\end{proof}

\begin{corollary}[intrinsic curvatures]
Let $X \in \Gr_{p,q}(\F^{m+n})$ and $U$, $V \in \mathbb{T}_X\Gr_{p,q}(\F^{m+n})$.
\begin{enumerate}[\normalfont(i)]
    \item The sectional curvature $\kappa_X: \mathbb{T}_X\Gr_{p,q}(\F^{m+n}) \times \mathbb{T}_X\Gr_{p,q}(\F^{m+n}) \to \R$ is given by
\[
    \kappa_X(U,V) = 
-\frac{
    2\Re\tr([U,V]^2)}
{
    \Re\tr(U^2)
    \Re\tr(V^2)
    -
    \bigl(\Re\tr(UV)\bigr)^2,
}
\]
where $U$, $V$ span a nondegenerate two-plane.
    \item The Ricci curvature $\mathsf{Ric}_X: \mathbb{T}_X\Gr_{p,q}(\F^{m+n}) \times \mathbb{T}_X\Gr_{p,q}(\F^{m+n}) \to \R$ is given by
\[
    \mathsf{Ric}_X(U,V)=
     \frac{\dim_\R \F \cdot(d+2)-4}{2(\lambda-\mu)^2}
    \Re\tr(UV).
\]
    \item The scalar curvature $\mathsf{Sca}_X = \tr(\mathsf{Ric}_X)$ is given by 
\[
    \mathsf{Sca}_X
    =
    \dim_\R \F \cdot k(d-k)\bigl(\dim_\R \F \cdot(d+2)-4\bigr).
\]
    \item The traceless Ricci curvature $\mathsf{Z}_X: \mathbb{T}_X\Gr_{p,q}(\F^{m+n}) \times \mathbb{T}_X\Gr_{p,q}(\F^{m+n}) \to \R$ is identically zero for all $X$, i.e., $\Gr_{p,q}(\F^{m+n})$ is an Einstein manifold.
\end{enumerate}
\end{corollary}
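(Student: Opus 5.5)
By $\Iso(\F;I_{m,n}^{p,q})$-invariance of $g$ (Proposition~\ref{prop:pseudo}) and of the curvature formula in Proposition~\ref{prop:geodesics-curvature}\ref{it:rie}, all four quantities need only be computed at the base point $X_0=\lambda I_k\oplus\mu I_{d-k}$. There every tangent vector has the form $Z=\bigl[\begin{smallmatrix}0&B^\dagger\\ B&0\end{smallmatrix}\bigr]$ with $B\in\F^{(d-k)\times k}$, and
\[
g_{X_0}(Z_1,Z_2)=\Re\tr(B_1^\dagger B_2)/(\lambda-\mu)^2 .
\]
All curvature quantities will be read off from
\[
\mathsf{Rie}_X(U,V,W,Z)=\frac{\Re\tr([[U,V],W]Z)}{2(\lambda-\mu)^4}.
\]

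\emph{Sectional curvature.} I will use
\[
\kappa_X(U,V)=\frac{\mathsf{Rie}_X(U,V,V,U)}{g(U,U)g(V,V)-g(U,V)^2}.
\]
Cyclicity of the trace gives
\[
\tr([[U,V],V]U)=\tr\bigl([U,V](VU-UV)\bigr)=-\tr([U,V]^2),
\]
so the numerator is $-\Re\tr([U,V]^2)/\bigl(2(\lambda-\mu)^4\bigr)$. Since $g=\Re\tr(\cdot\,\cdot)/\bigl(2(\lambda-\mu)^2\bigr)$, the denominator carries a factor $1/\bigl(4(\lambda-\mu)^4\bigr)$. The powers of $\lambda-\mu$ cancel and leave the stated factor $-2$. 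I will check the sign convention of $\mathsf{Rie}$ against that used in \cite{Pseudo}.

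\emph{Ricci curvature.} Take a $g$-pseudo-orthonormal basis $\{E_i\}$ of $\mathbb{T}_{X_0}$ with $g(E_i,E_i)=\varepsilon_i=\pm1$, and write
\[
\mathsf{Ric}(U,V)=\sum_i\varepsilon_i\,\mathsf{Rie}(E_i,U,V,E_i).
\]
For the basis I will take $E=c\bigl[\begin{smallmatrix}0&B^\dagger\\ B&0\end{smallmatrix}\bigr]$, where $B$ runs over $e_{ab}\,u$ and $u$ runs over the real basis $\{1\}$, $\{1,i\}$ or $\{1,i,j,k\}$ of $\F$. The signs $\varepsilon$ are those found in the signature computation of Proposition~\ref{prop:pseudo}, i.e.\ the sign of $I_{m-p,n-q}$ times that of $I_{p,q}$ on the relevant indices. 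Expanding $\Re\tr([[E,U],V]E)$ then yields sums of the form $\sum_{a,b,u}\varepsilon\,(e_{ab}u)\,Y\,(e_{ab}u)^\dagger$ and $\sum\varepsilon\,\Re\tr(e_{ab}u\,Y)\,\Re\tr(e_{ab}u\,Y')$.

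These reduce to two identities. The first is the completeness relation $\sum\varepsilon\,\Re\tr(EY)\Re\tr(EY')\propto\Re\tr(YY')$. The second is $\sum_u u\,x\,\bar u=\dim_\R\F\cdot\Re x$, which for $\F=\C$ or $\H$ produces the $\dim_\R\F$-dependent terms. The $\varepsilon$'s pair with the $I_{p,q},I_{m-p,n-q}$ factors hidden in $\dagger$, so the sums become ordinary matrix traces, giving factors $k$, $d-k$, and overall $d$. Collecting terms should give $\mathsf{Ric}=\bigl(\dim_\R\F\,(d+2)-4\bigr)\,g$, which is the claimed formula. I expect this bookkeeping, especially the quaternionic noncommutativity and tracking the signs $\varepsilon$, to be the main obstacle. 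I will sanity-check the result against the case $q=0=n$, where the space is the compact Grassmannian and the Einstein constant is known (e.g.\ $d-2$ for $\F=\R$).

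\emph{Scalar and traceless Ricci.} Since $\mathsf{Ric}=c\,g$ with $c=\dim_\R\F\,(d+2)-4$, contracting with $g^{-1}$ gives $\mathsf{Sca}=c\cdot\dim_\R\Gr_{p,q}=c\,\dim_\R\F\,k(d-k)$ by \eqref{eq:dim}. Then
\[
\mathsf{Z}=\mathsf{Ric}-\frac{\mathsf{Sca}}{\dim_\R}\,g=c\,g-c\,g=0,
\]
so $\Gr_{p,q}(\F^{m+n})$ is Einstein.
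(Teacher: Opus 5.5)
Your reduction to $X_0$, your sectional-curvature computation via $\Re\tr([[U,V],V]U)=-\Re\tr([U,V]^2)$, and your passage from $\mathsf{Ric}=c\,g$ to the scalar curvature and the Einstein property all follow the paper's argument and are fine.

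The gap is in the Ricci step, exactly where the constant is produced. The expansion is a single trace,
\[
\Re\tr([[E,U],V]E)=\Re\tr\bigl(E^2(UV+VU)\bigr)-2\Re\tr(UEVE).
\]
So what you need are the two matrix sums $\sum\varepsilon E^2$ and $\sum\varepsilon EVE$ over $E=(\lambda-\mu)Z_B$, with $Z_B=\bigl[\begin{smallmatrix}0&B^\dagger\\ B&0\end{smallmatrix}\bigr]$ and $B=e_{ab}u$; this is what the paper computes.

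\begin{itemize}
\item[\emph{First sum.}] It uses only $u\bar u=1$. It equals $(\lambda-\mu)^2\dim_\R\F\,\bigl((d-k)I_k\oplus kI_{d-k}\bigr)$ and contributes $\dim_\R\F\cdot d$ to the Einstein constant.
\item[\emph{Second sum.}] Its blocks are $BC^\dagger B$ and $B^\dagger CB^\dagger$, i.e.\ the pattern $u\,\bar z\,u$, not $u\,x\,\bar u$. The identity needed is $\sum_u u\bar z u=(2-\dim_\R\F)z$. It gives $\sum\varepsilon EVE=(\lambda-\mu)^2(2-\dim_\R\F)V$ and contributes $2\dim_\R\F-4$.
\end{itemize}

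Neither of your two identities delivers the second sum. Products of traces arise from the expansion only through the rank-one factorization $\tr(MBNB)=\tr(MB)\tr(NB)$. Over $\C$ and $\H$, however, $\Re\tr(MBNB)\neq\Re\tr(MB)\Re\tr(NB)$. If you apply your completeness relation to the cross term anyway, you get coefficient $1$ in place of $2-\dim_\R\F$. That yields the Einstein constant $\dim_\R\F\cdot d-2$ instead of $\dim_\R\F(d+2)-4$, which is wrong except over $\R$.

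Your other identity, $\sum_u ux\bar u=\dim_\R\F\,\Re x$, is false for $\C$, since $x+ix\bar i=2x$. This is harmless in the $E^2$ term, where only real traces survive. But deriving the $u\bar z u$ sum from it via $\sum_u uxu=2x-\sum_u ux\bar u$ would give $x-\bar x$ instead of $0$ over $\C$.

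Your planned sanity check cannot catch any of this, because the discrepancy vanishes when $\dim_\R\F=1$. Check $\Gr_1(\C^2)$ as well. There the sectional-curvature formula gives $\kappa\equiv 4$, so $\mathsf{Ric}=4g=2d\,g$, whereas the completeness route gives $2g$.
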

\begin{proof}
By Proposition~\ref{prop:geodesics-curvature},
\[
\mathsf{Rie}_X(U,V,V,U)=
\frac{1}{(\lambda-\mu)^4}
\Re\tr([U,V]VU)=
-\frac{1}{2(\lambda-\mu)^4}
\Re\tr([U,V]^2),
\]
giving the required expression for sectional curvature. By $\Iso(\F; I_{m,n}^{p,q})$-invariance, it suffices to calculate Ricci curvature at $X_0 = \diag(\lambda I_k, \mu I_{d-k})$. Let $X_{ijl} \coloneqq (\lambda - \mu)Z_{ \mathsf{i}_l E_{ij}}$ with $E_{ij} \in \F^{(d-k) \times k}$ the $(i,j)$th  elementary matrix and $\mathsf{i}_l$ the $l$th unit of $\F$ over $\R$ for $l =1,\dots, \dim_\R \F$. Direct calculation gives
\[
g_{X_0}(X_{ijl}, X_{i'j'l'}) = (I_{p,q})_{jj} (I_{m-p,n-q})_{ii} \delta_{i,i'} \delta_{j,j'} \delta_{l,l'}.
\]
Since for all $z \in \F$, 
\[
\mathsf{i}_{1} \overline{z} \mathsf{i}_{1} + \mathsf{i}_2 \overline{z} \mathsf{i}_2 + \dots + \mathsf{i}_{\dim_\R \F} \overline{z} \mathsf{i}_{\dim_\R \F} = (2 - \dim_\R \F) z,
\]
we get
\begin{align*}
    \sum_{i=1}^{d-k}\sum_{j=1}^k \sum_{l=1}^{\dim_\R\F} (I_{p,q})_{jj} (I_{m-p,n-q})_{ii} X_{ijl}^2 &= \dim_\R\F(\lambda-\mu)^2
\begin{bmatrix}
    (d-k)I_k&0\\
    0&kI_{d-k}
\end{bmatrix},\\
\sum_{i=1}^{d-k}\sum_{j=1}^k \sum_{l=1}^{\dim_\R\F} (I_{p,q})_{jj} (I_{m-p,n-q})_{ii} X_{ijl}VX_{ijl} &= (2- \dim_\R \F)(\lambda - \mu)^2 V.
\end{align*}
Therefore,
\[
    \mathsf{Ric}_{X_0}(U,V)
    =
    \sum_{i=1}^{d-k}\sum_{j=1}^k \sum_{l=1}^{\dim_\R\F} 
    (I_{p,q})_{jj} (I_{m-p,n-q})_{ii}
    \mathsf{Rie}_{X_0}
    (X_{ijl},U,V,X_{ijl})
\]
which simplifies to the required expression. By \eqref{eq:dim}, taking the pseudo-Riemannian trace gives the scalar curvature, from which we get $\mathsf{Z}_X = \mathsf{Ric}_X - \frac{1}{\dim_\R\F \cdot k(d-k)}\mathsf{Sca}_X \cdot g_X = 0$.
\end{proof}

Aside from these intrinsic curvatures, there is one extrinsic curvature that deserves recording.
Let $\mathsf{H}^2(\F^d;I_{m,n}^{p,q})$ be given the constant nondegenerate bilinear form $\overline g(A,B) \coloneqq \Re\tr(AB)/2(\lambda-\mu)^2$.

\begin{theorem}[Second fundamental form]
As a pseudo-Riemannian submanifold of $\mathsf{H}^2(\F^d;I_{m,n}^{p,q})$, the second fundamental form $\mathrm{II}_X: \mathbb{T}_X\Gr_{p,q}(\F^{m+n}) \times \mathbb{T}_X\Gr_{p,q}(\F^{m+n}) \to \mathbb{N}_X\Gr_{p,q}(\F^{m+n})$ is
\[
    \mathrm{II}_X(U,V)
    =
    \frac{(\lambda+\mu)I-2X}{(\lambda-\mu)^2}(UV+VU)
\]
for $X \in \Gr_{p,q}(\F^{m+n})$ and $U$, $V \in \mathbb{T}_X\Gr_{p,q}(\F^{m+n})$.
\end{theorem}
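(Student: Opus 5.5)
The plan is to compute $\mathrm{II}_X(U,V)$ as the normal component of the ambient derivative of a tangent vector field. The ambient space $\mathsf{H}^2(\F^d;I_{m,n}^{p,q})$ is a real vector space with a constant metric $\overline g$, so its Levi-Civita connection is ordinary differentiation. Hence
\[
\mathrm{II}_X(U,V) = \bigl(D_U \widetilde V\bigr)^{\perp}
\]
for any tangent extension $\widetilde V$ of $V$. Before doing this, I will check that $\overline g$ is nondegenerate on $\mathsf{H}^2(\F^d;I_{m,n}^{p,q})$, and that its restriction to $\mathbb{T}_X$ is the nondegenerate metric $g_X$ of Proposition~\ref{prop:pseudo}. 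Together these give the splitting $\mathsf{H}^2 = \mathbb{T}_X \oplus \mathbb{N}_X$.

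\textbf{Step 1: the two projections.} I will describe both projections explicitly. Set $Q_X \coloneqq (2X-(\lambda+\mu)I)/(\lambda-\mu)$ as in the proof of Proposition~\ref{prop:geodesics-curvature}, so that $Q_X^2=I$. By \eqref{eq:tangent}, $Z\in\mathsf{H}^2$ is tangent exactly when $Q_XZ+ZQ_X=0$, i.e.\ when $Z$ anticommutes with $Q_X$. In the model $X_0=\lambda I_k\oplus\mu I_{d-k}$ these are the off-diagonal block matrices, and the block-diagonal ones are exactly what $\overline g$-pairs to zero with them. So
\[
\mathbb{N}_X=\{Z\in\mathsf{H}^2: Q_XZ=ZQ_X\},\qquad Z^{\top}=\tfrac12(Z-Q_XZQ_X),\qquad Z^{\perp}=\tfrac12(Z+Q_XZQ_X).
\]
Both are $\overline g$-orthogonal because $\tr(Q_XAQ_XB)=\tr(AQ_XBQ_X)$.

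\textbf{Step 2: differentiate.} Fix tangent $U,V$ at $X$, and take the curve $\gamma(t)=\exp(tK)X\exp(-tK)$ from Proposition~\ref{prop:geodesics-curvature}, where $K=K_X(U)$ gives $\dot\gamma(0)=U$. Extend $V$ along this curve by $\widetilde V(t)=\exp(tK)V\exp(-tK)$. Since the $\Iso(\F;I^{p,q}_{m,n})$-action preserves tangency, $\widetilde V(t)$ is tangent at $\gamma(t)$. Then
\[
\dot{\widetilde V}(0)=[K,V]=\frac{[[U,X],V]}{(\lambda-\mu)^2}.
\]
Next I will compute the normal part of this using \eqref{eq:geo1}, in particular $XV=(\lambda+\mu)V-VX$ and the analogous relation for $U$. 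Expanding gives
\[
[[U,X],V] = UXV-XUV-VUX+VXU,
\]
and I will push every $X$ to one side using the anticommutation relations. Since a product $UV$ of two tangent vectors commutes with $Q_X$, it is normal. I expect
\[
[[U,X],V]=\bigl((\lambda+\mu)I-2X\bigr)(UV+VU)
\]
up to a term that is tangent. Projecting then yields the stated formula.

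\textbf{Step 3: check normality and symmetry.} The result should be symmetric in $U,V$ and should lie in $\mathbb{N}_X$. Both hold because $UV+VU$ commutes with $X$, and $((\lambda+\mu)I-2X)$ is a polynomial in $X$. The resulting product is also $I^{p,q}_{m,n}$-Hermitian, since each factor is and they commute.

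\textbf{Main obstacle.} The delicate step is the algebra in Step 2. One has to confirm that after the normal projection, the commutator $[[U,X],V]$ collapses exactly to $((\lambda+\mu)I-2X)(UV+VU)$, with no leftover scalar factor. The identities available are $UX=(\lambda+\mu)U-XU$, the same for $V$, and the fact that $X$ commutes with $UV$. To guard against a sign or factor error, I would first verify the formula in the block model at $X_0$. There $U,V$ are off-diagonal, $UV+VU$ is block diagonal, and $(\lambda+\mu)I-2X_0=(\mu-\lambda)I_k\oplus(\lambda-\mu)I_{d-k}$. I would then transport the result to a general $X$ by equivariance.
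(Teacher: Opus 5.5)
Your proposal is correct and follows essentially the same route as the paper. Both use the curve $\exp(tK_X(U))X\exp(-tK_X(U))$ and transport $V$ along it, then observe that the derivative $[K_X(U),V]$ is normal; the paper computes this in block form at $X_0$ and extends by equivariance. Your expansion in fact gives $[[U,X],V]=((\lambda+\mu)I-2X)(UV+VU)$ exactly, with no tangent remainder, so the normal projection step is trivial.
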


\begin{proof}
By Proposition~\ref{prop:equi-orb} and \eqref{eq:equiv-model}, it suffices to calculate at $X_0 = \diag(\lambda I_k, \mu I_{d-k})$. By Proposition~\ref{prop:pseudo},
\begin{equation}\label{eq:X0}
    \mathbb{T}_{X_0}
    \Gr_{p,q}(\F^{m+n})
    =
    \bigl\{
        Z_A:
        A\in\F^{(d-k)\times k}
    \bigr\},
    \qquad
    Z_A
    \coloneqq
    \begin{bmatrix}
        0&A^\dagger\\
        A&0
    \end{bmatrix}.
\end{equation}
Let $\Phi(A)
\coloneqq
\exp(K_{X_0}(Z_A)) X_0 \exp(-K_{X_0}(Z_A))$ with $K_{X_0}(Z_A)$ as in Proposition~\ref{prop:geodesics-curvature}. Since $K_{X_0}(Z_A) \in\mathfrak{iso}_{m,n}(\F)$, we have $\Phi(\F^{(d-k)\times k}) \subseteq \Gr_{p,q}(\F^{m+n})$. By \eqref{eq:geo2}, $(d\Phi)_0(A) = [K_{X_0}(Z_A), X_0] = Z_A$. So $\Phi$ parametrizes a neighborhood of $X_0$ when restricted to a neighborhood of zero.

Let $V = Z_B$ for $B \in \F^{(d-k) \times k}$ and extend it to a smooth local vector field around $X_0$ via
\[
\widetilde V\bigl(\Phi(C)\bigr)
\coloneqq
\exp(K_{X_0}(Z_C))V\exp(-K_{X_0}(Z_C)) \in \mathbb{T}_{\Phi(C)}\Gr_{p,q}(\F^{m+n}).
\]  
Since
\[\frac{d}{dt}\bigg \vert_{t=0}\Phi(tA)
=
[K_{X_0}(Z_A),X_0]
=
U,\]
for all $A \in \F^{(d-k) \times k}$, we get
\[
\frac{d}{dt}\bigg \vert_{t=0}
\exp(tK_{X_0}(Z_A))V\exp(-tK_{X_0}(Z_A)) = -\frac{1}{\lambda-\mu}
Q_{X_0}(Z_AZ_B+Z_BZ_A) \in \mathbb{N}_{X_0}\operatorname{Gr}_{p,q}(\F^{m+n}),
\]
with $Q_X$ as in the proof of Proposition~\ref{prop:geodesics-curvature}.
Therefore, $\mathrm{II}_{X_0}(Z_A,Z_B)
= -\frac{1}{\lambda-\mu}
Q_{X_0}(Z_AZ_B+Z_BZ_A)$, which extends to all $X$,$U$,$V$ by virtue of $\Iso_{m,n}(\F)$-equivariance.
\end{proof}
The second fundamental form for other embeddings may be similarly calculated. For example, for the model $\Gr_{p,q}^\pi(\F^{m+n})$ in Proposition~\ref{prop:i-proj}, it is given by $\mathrm{II}_X(U,V)    =    (I-2X)(UV+VU)$.

The special case $\F = \C$ has one additional noteworthy property:
\begin{proposition}[complex structure]\label{prop:pseudo-kahler}
For $X\in \Gr_{p,q}(\C^{m+n})$ and $Z\in
\mathbb{T}_X\Gr_{p,q}(\C^{m+n})$, let
\begin{equation}\label{eq:complex-structure-matrix}
\mathcal{J}_XZ
\coloneqq 
\frac{i}{\lambda -\mu }[Z,X].
\end{equation}
Then  $\mathcal{J}$ defines an $\Iso_{m,n}(\C)$-invariant complex structure on $\Gr_{p,q}(\C^{m+n})$ compatible with $g$ in that
\[
g_X(\mathcal{J}_XU,\mathcal{J}_XV) = g_X(U,V)
\]
for all $U$, $V \in \mathbb{T}_X\Gr_{p,q}(\C^{m+n})$. Moreover, $\Gr_{p,q}(\C^{m+n})$ is a pseudo-K\"ahler manifold with respect to $g$ and $\mathcal{J}$. In particular, $\Omega(U,V) \coloneqq g(\mathcal{J}_X U, V)$ defines a symplectic form on $\Gr_{p,q}(\C^{m+n})$.
\end{proposition}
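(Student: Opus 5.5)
Everything will be checked at the base point $X_0=\diag(\lambda I_k,\mu I_{d-k})$ and then transported to an arbitrary $X$ by $\Iso(\C;I_{m,n}^{p,q})$-equivariance. This is legitimate because $\mathcal{J}_{UXU^{-1}}(UZU^{-1})=U(\mathcal{J}_XZ)U^{-1}$; the identity is immediate from \eqref{eq:complex-structure-matrix}, since conjugation commutes with brackets and with multiplication by the scalar $i$. That $\mathcal{J}$ is well defined, $\mathcal{J}_X(\mathbb{T}_X)\subseteq \mathbb{T}_X$, is also a pointwise check. Write $Z=Z_A=\bigl[\begin{smallmatrix}0&A^\dagger\\A&0\end{smallmatrix}\bigr]$ as in \eqref{eq:X0}. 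Then
\[
[Z_A,X_0]=(\lambda-\mu)\begin{bmatrix}0&-A^\dagger\\A&0\end{bmatrix},
\]
and so $\mathcal{J}_{X_0}Z_A=\bigl[\begin{smallmatrix}0&-iA^\dagger\\ iA&0\end{smallmatrix}\bigr]$. Over $\C$ the adjoint is conjugate-linear, so $(iA)^\dagger=-iA^\dagger$, and this matrix is exactly $Z_{iA}$. Thus $\mathcal{J}_{X_0}$ is multiplication by $i$ on $A\in\C^{(d-k)\times k}$, and $\mathcal{J}^2=-1$ follows (alternatively, use \eqref{eq:geo2}).

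Compatibility with $g$ is then a one-line computation. From the proof of Proposition~\ref{prop:pseudo},
\[
g_{X_0}(Z_A,Z_B)=\frac{\Re\tr(A^\dagger B)}{(\lambda-\mu)^2},
\]
and replacing $A,B$ by $iA,iB$ gives $\Re\tr(-iA^\dagger\, iB)=\Re\tr(A^\dagger B)$, so $g_X(\mathcal{J}_XU,\mathcal{J}_XV)=g_X(U,V)$. From this, $\Omega(U,V)=g(\mathcal{J}U,V)$ is skew, because
\[
g(\mathcal{J}V,U)=g(\mathcal{J}^2V,\mathcal{J}U)=-g(V,\mathcal{J}U),
\]
and $\Omega$ is nondegenerate because both $g$ and $\mathcal{J}$ are.

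For integrability, I would rather not compute the Nijenhuis tensor. The cleanest route is Proposition~\ref{prop:open}: $\Gr_{p,q}(\C^{m+n})$ is an open complex submanifold of $\Gr_k(\C^d)$. It therefore suffices to identify $\mathcal{J}$ with the standard complex structure. Under Proposition~\ref{prop:pseudo}(iii), $(d\Phi_{\lambda,\mu})_\W(A)=(\lambda-\mu)Z_A$ with $A\in\Hom_\C(\W,\W^{\perp_\omega})$, and the holomorphic tangent space of $\Gr_k(\C^d)$ at $\W$ is $\Hom_\C(\W,\C^d/\W)\cong\Hom_\C(\W,\W^{\perp_\omega})$. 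These are identified $\C$-linearly, since $\W^{\perp_\omega}$ is a complex complement of $\W$. The computation above shows that $\mathcal{J}$ corresponds to multiplication by $i$ on $A$, so $\mathcal{J}$ is the standard integrable structure.

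The step I expect to need the most care is closedness of $\Omega$, which is what upgrades this to pseudo-K\"ahler. I would handle it by a symmetric-space argument rather than by computing $d\Omega$. Both $g$ and $\mathcal{J}$ are invariant under the geodesic symmetry $\sigma_X$, since $Q_X$ commutes with $X$ and hence with $\mathcal{J}_X$. Consequently $\Omega$ is $\sigma_X$-invariant, and so is $d\Omega$. But $(d\sigma_X)_X=-\mathrm{id}$ on $\mathbb{T}_X$, so a $\sigma_X$-invariant $3$-form satisfies $(d\Omega)_X=(-1)^3(d\Omega)_X$, forcing $d\Omega=0$. This gives closedness, hence $\Omega$ is symplectic, and together with integrability and compatibility it yields the pseudo-K\"ahler structure. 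As an alternative, I could show $\nabla\mathcal{J}=0$ by the same parity argument applied to the invariant tensor $\nabla\mathcal{J}$, which has odd degree $3$. Finally, I would double-check that $\Iso_{m,n}(\C)$-invariance of $\mathcal{J}$ is already covered by the conjugation identity from the first paragraph.
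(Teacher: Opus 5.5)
Your proposal is correct, but it reaches integrability and closedness of $\Omega$ by a different route from the paper's.

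\textbf{The paper's route.} The paper never identifies $\mathcal{J}$ concretely. It checks tangency and $\mathcal{J}^2=-1$ from \eqref{eq:commutator} and \eqref{eq:geo2}, and compatibility from the trace identity $-\Re\tr([U,X][V,X])=(\lambda-\mu)^2\Re\tr(UV)$. It then obtains $\nabla\mathcal{J}=0$ in one stroke by citing general results on pseudo-Riemannian symmetric spaces: the canonical connection equals the Levi--Civita connection, and invariant tensors are parallel for it. Integrability is left implicit in $\nabla\mathcal{J}=0$, and $d\Omega=0$ follows from $\nabla\Omega=0$.

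\textbf{Your route.} You show that $\mathcal{J}$ is multiplication by $i$ on $A\in\Hom_\C(\W,\W^{\perp_\omega})$. This identifies $\mathcal{J}$ with the complex structure that $\Gr_{p,q}(\C^{m+n})$ inherits as an open complex submanifold of $\Gr_k(\C^d)$ (Proposition~\ref{prop:open}). The paper does not verify this identification, and it makes integrability free. You then prove $d\Omega=0$ by Cartan's parity argument with the geodesic symmetries. That argument is self-contained and is exactly the mechanism behind the paper's citation; your fallback of applying parity to $\nabla\mathcal{J}$ essentially reproduces the paper's proof.

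\textbf{What each buys.} The paper's route gives the stronger statement $\nabla\mathcal{J}=0$ directly. Yours gives pseudo-K\"ahler in the equivalent form ``integrable, compatible, $d\Omega=0$.'' In addition, yours shows that $\mathcal{J}$ is the ambient Grassmannian complex structure, not just some invariant complex structure.

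\textbf{One justification to fix.} Your reason for $\sigma_X$-invariance of $\mathcal{J}$, ``$Q_X$ commutes with $X$,'' only concerns the point $X$. But $\sigma_X^*\,d\Omega=d(\sigma_X^*\Omega)=d\Omega$ requires $\sigma_X^*\Omega=\Omega$ at every point. The correct reason is that $\sigma_X$ is conjugation by $Q_X\in\Iso(\C;I_{m,n}^{p,q})$, since $Q_X^2=I$ and $Q_X$ is $I_{m,n}^{p,q}$-self-adjoint. Your conjugation identity $\mathcal{J}_{UYU^{-1}}(UZU^{-1})=U(\mathcal{J}_YZ)U^{-1}$ from the first paragraph then gives the needed invariance at every $Y$.
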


\begin{proof}
Let $X\in \Gr_{p,q}(\C^{m+n})$ and $Z\in
\mathbb{T}_X\Gr_{p,q}(\C^{m+n})$. By \eqref{eq:commutator}, $i[Z,X]$ is $I_{m,n}^{p,q}$-self-adjoint. Since $X[Z,X]+[Z,X]X = [Z,X^2]= (\lambda +\mu )[Z,X]$, we obtain $\mathcal{J}_X Z \in \mathbb{T}_X\Gr_{p,q}(\C^{m+n})$. By \eqref{eq:geo2}, $\mathcal{J}_X^2Z = -Z$, so $\mathcal{J}$ is an almost complex structure. The $\Iso(\C; I_{m,n}^{p,q})$-invariance follows from
\[
\mathcal{J}_{AXA^{-1}}(AZA^{-1})= \frac{i}{\lambda -\mu }[AZA^{-1},AXA^{-1}] =A(\mathcal{J}_XZ)A^{-1},
\]
for all $A\in\Iso_{m,n}(\C)$. Since $-\Re\tr([U,X][V,X])= \Re\tr([[U,X],X]V)=(\lambda -\mu )^2\Re\tr(UV)$ for all $U$, $V \in \mathbb{T}_X\Gr_{p,q}(\C^{m+n})$, we get
$g_X(\mathcal{J}_XU,\mathcal{J}_XV) =\Re\tr(UV)/2(\lambda -\mu )^2$ as required.
By \cite[Theorem~2.1.2, Proposition~1.4.15]{Pseudo}, $\nabla\mathcal{J}=0$, so $(g,\mathcal{J})$ is pseudo-K\"ahler. Consequently, the associated fundamental form $\Omega(U,V)= g(\mathcal{J}U,V)$ is nondegenerate, as both $g$ and $\mathcal{J}$ are nondegenerate. Moreover, since $\nabla g=0$ and $\nabla\mathcal{J}=0$, we have $\nabla\Omega=0$, and thus $d\Omega=0$. Hence $\Omega$ is a symplectic form.
\end{proof}

\section{Topology of the indefinite Grassmannian}\label{sec:top}

As a real manifold, the indefinite Grassmannian deformation retracts to a product of two Grassmannians determined by the pairs $(p,m)$ and $(q,n)$. Recall that results stated for $\F$ holds for all of $\R$, $\C$, $\H$.
\begin{theorem}[Strong deformation retract]\label{thm:sdr}
The inclusion 
\[
\Gr_{p}(\F^{m})\times \Gr_{q}(\F^{n}) \hookrightarrow \Gr_{p,q}(\F^{m+n}), \quad (\W_1,\W_2) \mapsto \W_1 \oplus \W_2
\]
induces a strong deformation retract of $\Gr_{p,q}(\F^{m+n})$ onto $\Gr_{p}(\F^{m})\times \Gr_{q}(\F^{n})$.
\end{theorem}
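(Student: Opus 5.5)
The plan is to realize the retraction concretely through a graph description of indefinite subspaces. Write $\F^d = \F^m \oplus \F^n$ as the positive and negative parts for $\omega_{m,n}$. The first step is to check that the image of the inclusion lies in $\Gr_{p,q}(\F^{m+n})$: if $\W_1 \in \Gr_p(\F^m)$ and $\W_2 \in \Gr_q(\F^n)$, then $\omega_{m,n}$ restricted to $\W_1 \oplus \W_2$ is positive definite on $\W_1$, negative definite on $\W_2$, and the two summands are $\omega$-orthogonal. So the signature is $(p,q,0)$.

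Next, given $\W \in \Gr_{p,q}(\F^{m+n})$, I would construct a canonical splitting. Let $\W_+ \coloneqq \W \cap (\W \cap \F^n)^{\perp}$; this naive choice will not work well, so I would instead proceed as follows. Let $\pi_+, \pi_-$ be the Euclidean projections onto $\F^m$ and $\F^n$. The operator $J \coloneqq P_\W I_{m,n}|_\W$ is self-adjoint on $\W$ (Euclidean), and it is invertible with $p$ positive and $q$ negative eigenvalues; this follows from the proof of Proposition~\ref{prop:proj}. Let $\W_\pm$ be its positive and negative spectral subspaces. These are Euclidean-orthogonal and also $\omega$-orthogonal, since $\omega(x,y)=\langle x, Jy\rangle$. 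Moreover $\omega$ is positive definite on $\W_+$, so $\pi_+|_{\W_+}$ is injective. Hence $\W_+$ is the graph of a linear map $T_+ \colon \pi_+(\W_+) \to \F^n$ with $\lVert T_+ \rVert < 1$. Similarly, $\W_-$ is the graph of $T_- \colon \pi_-(\W_-) \to \F^m$ with $\lVert T_-\rVert<1$. All of this data depends continuously on $\W$, since spectral projectors of $J$ onto the positive and negative parts vary continuously, the spectrum being bounded away from $0$ locally.

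Define the homotopy
\[
H_t(\W) \coloneqq \operatorname{graph}\bigl((1-t)T_+\bigr) \oplus \operatorname{graph}\bigl((1-t)T_-\bigr), \qquad t\in[0,1].
\]
At $t=1$ this is $\pi_+(\W_+)\oplus \pi_-(\W_-) \in \Gr_p(\F^m)\times\Gr_q(\F^n)$. If $\W = \W_1\oplus \W_2$ already, then $J$ is $\diag(I,-I)$ on it, so $T_\pm=0$ and $H_t(\W)=\W$ for all $t$. The remaining check is that $H_t(\W)$ stays in $\Gr_{p,q}$. Take $x=(a+(1-t)T_+a)+((1-t)T_-b+b)$. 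Then
\[
\omega(x,x)=\lVert a+(1-t)T_-b\rVert^2-\lVert b+(1-t)T_+a\rVert^2 .
\]
Nondegeneracy is the main obstacle: cross terms appear because the shrunken graphs need not remain $\omega$-orthogonal, so signature preservation is not immediate. The cleanest fix is to avoid needing orthogonality. Restricted to the $p$-dimensional graph of $(1-t)T_+$, $\omega$ is positive definite because $\lVert (1-t)T_+\rVert<1$. Restricted to the $q$-dimensional graph of $(1-t)T_-$, it is negative definite. The sum is direct, because a vector in both graphs would satisfy both $\omega>0$ and $\omega<0$. A $k$-dimensional space containing a $p$-dimensional positive definite subspace and a $q$-dimensional negative definite subspace has signature exactly $(p,q,0)$, by the maximality in the definition of signature. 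So $H_t(\W)\in\Gr_{p,q}(\F^{m+n})$.

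Continuity of $(t,\W)\mapsto H_t(\W)$ follows from the continuity of $\W_\pm$ and of the graph construction. For the latter, the projector onto $\operatorname{graph}(sT)$ is a continuous function of $s$ and the projector onto $\W_\pm$, through the formula for graph projectors in terms of $\pi_\pm P_{\W_\pm}$. This gives the strong deformation retraction.
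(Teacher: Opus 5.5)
Your proof is correct, and it takes a genuinely different route from the paper's.

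\textbf{The paper's route.} The paper stays in the homogeneous model of Proposition~\ref{prop:hom}. It cites Mostow's theorem that $G/H$ strong deformation retracts onto the orbit $K/(K\cap H)$ of a maximal compact subgroup. With $G=\Iso_{m,n}(\F)$ and $K=\Iso_{m,n}(\F)\cap\Iso_{m+n}(\F)=\Iso_m(\F)\times\Iso_n(\F)$, all that remains is a stabilizer computation identifying $K/(K\cap H)$ with $\Gr_p(\F^m)\times\Gr_q(\F^n)$. This is short and conceptual. However, the real content is a black-box appeal to Mostow, and the homogeneous description itself depends on Witt's extension theorem, including its quaternionic form.

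\textbf{Your route.} You need only Sylvester's law and continuity of the spectral projectors of $P_\W I_{m,n}P_\W$. The argument treats $\R$, $\C$, $\H$ uniformly and produces an explicit retraction. You can write it even more cleanly as
\[
H_t(\W)=\diag\bigl(I_m,(1-t)I_n\bigr)\W_+ + \diag\bigl((1-t)I_m,I_n\bigr)\W_- .
\]
These two linear maps are injective on $\W_+$ and $\W_-$ respectively for every $t\in[0,1]$, so continuity is immediate.

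\textbf{The obstacle you flag does not arise.} Write $\W_1\coloneqq\pi_+(\W_+)$ and $\W_2\coloneqq\pi_-(\W_-)$. Add and subtract the identities $\langle a+T_+a,\,T_-b+b\rangle=0$ and $\omega(a+T_+a,\,T_-b+b)=0$. This gives $\langle a,T_-b\rangle=0=\langle T_+a,b\rangle$ for all $a\in\W_1$, $b\in\W_2$. These conditions are invariant under scaling $T_\pm$. So the shrunken graphs remain both Euclidean- and $\omega$-orthogonal, and the cross terms in your formula for $\omega(x,x)$ vanish. Your maximality workaround is still valid, so this does not affect correctness.

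\textbf{What noticing this buys you.} It shows that $T_+\in\Hom(\W_1,\W_2^{\perp}\cap\F^n)$ and $T_-\in\Hom(\W_2,\W_1^{\perp}\cap\F^m)$ are strict contractions. Conversely, every such pair of strict contractions gives a point of $\Gr_{p,q}(\F^{m+n})$ whose spectral splitting is exactly $\operatorname{graph}T_+\oplus\operatorname{graph}T_-$. Hence your parametrization identifies $\Gr_{p,q}(\F^{m+n})$ with an open disk bundle of real rank $\dim_\R\F\,[p(n-q)+q(m-p)]$ over $\Gr_p(\F^m)\times\Gr_q(\F^n)$. Your homotopy is then fiberwise radial contraction. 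This is an explicit version of the vector-bundle structure that Mostow's theorem supplies abstractly in the paper's argument.
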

\begin{proof}
Consider the subspaces
\begin{alignat*}{4}
  \W_1 &= \operatorname{span}\{e_1,\dots,e_p\}, \quad &\W_2 &= \operatorname{span} \{e_{m+1},\dots, e_{m+q}\}, & &\\
  \W_3 &= \operatorname{span}\{e_{p+1},\dots,e_m\}, \quad &\W_4 &= \operatorname{span}\{e_{m+q+1},\dots,e_{d}\}, \quad &\W_5 &= \W_1 \oplus \W_2
\end{alignat*}
under the action of $\Iso_{m,n}(\F)$. 
By \cite[Theorem~3.1]{Mostow} and \cite[Chapter~X, Section~2]{Helgason2001}, it suffices to verify that 
\begin{equation}\label{eq:tmp1}
\adjustbox{max width=0.9\textwidth}{$\dfrac{\Iso_{m,n}(\F) \cap \Iso_{m+n}(\F)}{\Iso_{m,n}(\F) \cap \Iso_{m+n}(\F) \cap \stab(\W_5)}= \dfrac{\diag(\Iso_m(\F), \Iso_n(\F))}{\diag(\Iso_m(\F) , \Iso_n(\F)) \cap \stab(\W_5) }\cong \Gr_{p}(\F^m) \times \Gr_{q}(\F^n).$}
\end{equation}
Let $Q \in \Iso_m(\F)$ and $Q' \in \Iso_n(\F)$ be such that $Q \W_1 = \W_1$ and $Q' \W_2 = \W_2$. Since $Q$ acts unitarily on $\W_1 \oplus \W_3$, $Q = \diag(Q_1,Q_2)$ where $Q_1 \in \Iso_p(\F)$ and $Q_2 \in \Iso_{m-p}(\F)$. Similarly, $Q' = \diag(Q_1',Q_2')$ where $Q_1' \in \Iso_q(\F)$ and $Q_2' \in \Iso_{n-q}(\F)$. Therefore,
\[
\diag(\Iso_m(\F), \Iso_n(\F)) \cap \stab(\W_5) = \diag \bigl(\Iso_p(\F), \Iso_{m-p}(\F), \Iso_q(\F),\Iso_{n-q}(\F)\bigr).
\]
Consequently, the space in the middle of \eqref{eq:tmp1} is isomorphic to
$\bigl[\Iso_m(\F) \times \Iso_n(\F)\bigr] / \bigl[ \Iso_p(\F)\times \Iso_{m-p}(\F)\times \Iso_q(\F)\times \Iso_{n-q}(\F) \bigr]$, which is in turn isomorphic to $\Gr_{p}(\F^m) \times \Gr_{q}(\F^n)$.
\end{proof}

By Theorem~\ref{thm:sdr}, standard topological invariants can be obtained via homotopy equivalence. We will record a few below (homotopy, cohomology, characteristic class) for convenient future reference.
\begin{corollary}[Homotopy groups]\label{cor:hom-grp}
For any $l\geq 1$,
\[
\pi_l\bigl(\Gr_{p,q}(\F^{m+n})\bigr)
\cong
\pi_l\bigl(\Gr_p(\F^m)\bigr)
\times
\pi_l\bigl(\Gr_q(\F^n)\bigr).
\]
Let
\[
s \coloneqq 
\begin{cases}
\dim_\R\F\bigl(\min\{q, n-q\} + 1\bigr) - 2 & \text{if } (p=0 \text{ or } p=m) \text{ and } (0 < q < n),\\ 
\dim_\R\F\bigl(\min\{p, m-p\} + 1\bigr) - 2 & \text{if } (0 < p < m) \text{ and } (q=0 \text{ or } q=n),\\ 
\dim_\R\F\bigl(\min\{p, m-p, q, n-q\} + 1\bigr) - 2 & \text{if } (0 < p < m) \text{ and } (0 < q < n),\\
 +\infty & \text{if } (p=0 \text{ or } p=m) \text{ and } (q=0 \text{ or } q=n).
\end{cases}
\]
Then for $l =1,\dots, s$,
\[
\pi_l\bigl(\Gr_{p,q}(\F^{m+n})\bigr)
\cong
\bigl(\Pi_l^{\F}\bigr)^{\boldsymbol{1}_{\{0<p<m\}}
+\boldsymbol{1}_{\{0<q<n\}}}.
\]
\end{corollary}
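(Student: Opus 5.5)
The first isomorphism follows from Theorem~\ref{thm:sdr}. The inclusion $\Gr_p(\F^m)\times\Gr_q(\F^n)\hookrightarrow\Gr_{p,q}(\F^{m+n})$ is a homotopy equivalence, so it induces isomorphisms on all homotopy groups. For $l\ge1$ the homotopy groups of a product are the products of the homotopy groups. Both factors are path-connected for $\F=\R,\C,\H$, so base points play no role. This gives $\pi_l(\Gr_{p,q}(\F^{m+n}))\cong\pi_l(\Gr_p(\F^m))\times\pi_l(\Gr_q(\F^n))$.

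For the second statement I will use the standard stable-range computation for a single Grassmannian. The symbol $\Pi_l^{\F}$ is not defined in the text shown, so I read it as the stable homotopy group $\pi_l(\Gr_\infty(\F^\infty))=\pi_{l-1}(\Iso_\infty(\F))$, as predicted by Bott periodicity.

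The key input is the fibration $\Iso_p(\F)\to \St_p(\F^m)\to\Gr_p(\F^m)$, where $\St_p(\F^m)=\Iso_m(\F)/\Iso_{m-p}(\F)$. The Stiefel manifold is $(\dim_\R\F\,(m-p+1)-2)$-connected, which follows inductively from the sphere fibrations $\Iso_{j-1}\to\Iso_j\to S^{j\dim_\R\F-1}$. The long exact sequence then gives $\pi_l(\Gr_p(\F^m))\cong\pi_{l-1}(\Iso_p(\F))$ for $l\le\dim_\R\F(m-p+1)-2$.

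Next I will make the answer stable in $p$ as well. By $\Gr_p(\F^m)\cong\Gr_{m-p}(\F^m)$ we may replace $p$ with $m-p$. The inclusion $\Iso_p\to\Iso_{p+1}$ is $(\dim_\R\F(p+1)-2)$-connected, again from the sphere fibrations. Combining these, $\pi_{l-1}(\Iso_{\min\{p,m-p\}}(\F))$ agrees with the stable value $\Pi^\F_l$ for $l\le\dim_\R\F(\min\{p,m-p\}+1)-2$.

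When $p\in\{0,m\}$ the Grassmannian $\Gr_p(\F^m)$ is a point, so its homotopy groups vanish in every degree. This accounts for the indicator exponents and for the case $s=+\infty$. Taking the minimum of the two single-factor ranges gives the stated $s$ in the mixed cases.

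The main obstacle is bookkeeping of the exact connectivity bounds and the edge index, namely whether the isomorphism holds at $l=s$ itself or only up to $s-1$. I would check this against $\Gr_1(\R^2)=S^1$, $\Gr_1(\C^2)=S^2$ and $\Gr_1(\H^2)=S^4$, and adjust the citation accordingly.
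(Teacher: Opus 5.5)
Your proposal is correct and follows the paper's proof. Theorem~\ref{thm:sdr} gives a homotopy equivalence, the product formula splits the homotopy groups, each Grassmannian factor is then put in its stable range, and the point case $p\in\{0,m\}$ (or $q\in\{0,n\}$) supplies the indicator exponents. The only difference is that the paper cites Mimura--Toda for $\pi_l(\Gr_p(\F^m))\cong\Pi_l^\F$ when $l\le\dim_\R\F(\min\{p,m-p\}+1)-2$, whereas you derive it from the Stiefel and sphere fibrations; the paper defines $\Pi_l^\F$ as exactly the Bott-periodic table of $\pi_{l-1}(\Iso_\infty(\F))$ you assumed, and your connectivity bounds are right and give the isomorphism inclusively at $l=s$.
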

\begin{proof}
The inclusion $\iota\colon
\Gr_p(\F^m)\times\Gr_q(\F^n)
\hookrightarrow
\Gr_{p,q}(\F^{m+n})$ in Theorem~\ref{thm:sdr} is a
homotopy equivalence and induces isomorphisms on all homotopy groups. We then use the product formula for homotopy groups \cite[Proposition~4.2]{Hatcher}.

Computing general homotopy groups of the standard Grassmannian is well-known to be difficult, but Bott periodicity gives partial results for special cases \cite[Chapter~IV, Theorem~6.2, Table~4.1]{MimuraToda}. Let
\[
\Pi_l^{\R}= 
\begin{cases}
    \Z & l\equiv 0,4\mod 8,\\
    \Z_2 & l\equiv 1,2\mod 8,\\
    0 & \text{otherwise},
\end{cases}\quad
\Pi_l^{\C}
=
\begin{cases}
    \Z & l\equiv 0\mod 2,\\
    0 & l\equiv 1\mod 2,
\end{cases} \quad
\Pi_l^\H=
\begin{cases}
\Z & l\equiv0,4\mod8,\\
\Z_2 & l\equiv5,6\mod8,\\
0 & \text{otherwise}.
\end{cases}
\]
If $p=1,\dots,m-1$, then for $l = 1,\dots, \dim_{\mathbb{R}}\mathbb{F}\cdot \bigl(\min\{p,m-p\}+1\bigr)-2$, we have $\pi_l\bigl(\Gr_p(\mathbb{F}^m)\bigr) \cong \Pi_l^{\mathbb{F}}$  \cite[Chapter~II]{MimuraToda}. If $p=0$ or $p=m$, the Grassmannian is simply a point.
\end{proof}

Next we state its cohomology, again as a public service for easy future references.
\begin{corollary}[Cohomology ring]\label{cor:cohom}
For every coefficient field $\Bbbk$, there is an isomorphism of
graded $\Bbbk$-algebras
\begin{equation}\label{eq:cohom1}
H^*\bigl(\Gr_{p,q}(\F^{m+n});\Bbbk\bigr)
\cong
H^*\bigl(\Gr_p(\F^m);\Bbbk\bigr)
\otimes_{\Bbbk}
H^*\bigl(\Gr_q(\F^n);\Bbbk\bigr),
\end{equation}
where the tensor product is endowed with its usual graded-algebra structure.  Let $\mathcal{T}$ be the tautological bundle over $\Gr_p(\R^m)$ and consider the quotient bundle $\mathcal{Q} = (\Gr_p(\R^m) \times \R^m)/\mathcal{T}$. Let $w$ be the Stiefel--Whitney class. Then 
$H^*\bigl(\Gr_p(\R^m);\Z_2\bigr) $ is given by
\begin{equation}\label{eq:cohom}
\dfrac{\Z_2[w_1(\mathcal{T}),\dots,w_p(\mathcal{T}),w_1(\mathcal{Q}),\dots,w_{m-p}(\mathcal{Q})]}{\langle[t^j]\bigl((1+w_1(\mathcal{T})t+\dots + w_p(\mathcal{T})t^p)(1+w_1(\mathcal{Q})t+\dots+ w_{m-p}(\mathcal{Q})t^{m-p})-1\bigr): j=1,\dots, m\rangle}.
\end{equation}
\end{corollary}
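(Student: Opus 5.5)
The plan is to deduce everything from Theorem~\ref{thm:sdr} and then quote the classical presentation of the cohomology of a real Grassmannian. Theorem~\ref{thm:sdr} says the inclusion
\[
\iota\colon \Gr_p(\F^m)\times\Gr_q(\F^n)\hookrightarrow\Gr_{p,q}(\F^{m+n})
\]
is a homotopy equivalence. Since singular cohomology with any coefficients is homotopy invariant, $\iota^*$ is an isomorphism of graded $\Bbbk$-algebras, because pullback respects cup products. It therefore suffices to compute $H^*(\Gr_p(\F^m)\times\Gr_q(\F^n);\Bbbk)$.

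For this I would use the Künneth theorem over a field. Each factor is a compact manifold, and it has a finite CW structure by Schubert cells. Hence its cohomology is finite dimensional in every degree, and the cross product gives an isomorphism of graded algebras
\[
H^*(X\times Y;\Bbbk)\cong H^*(X;\Bbbk)\otimes_\Bbbk H^*(Y;\Bbbk).
\]
Here the multiplication on the right carries the Koszul sign $(a\otimes b)(a'\otimes b')=(-1)^{|b||a'|}aa'\otimes bb'$. Composing with $\iota^*$ gives \eqref{eq:cohom1}.

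For \eqref{eq:cohom} I would invoke Borel's theorem, as presented in Milnor--Stasheff. Write $\mathcal{T}\oplus\mathcal{Q}$ for the trivial bundle of rank $m$. The Whitney product formula gives $w(\mathcal{T})\,w(\mathcal{Q})=1$, so every homogeneous component of positive degree $j$ of the product of total classes vanishes. These vanishing components are exactly the relations $[t^j](\cdots)$ for $j=1,\dots,m$. Components with $j>m$ vanish automatically in the polynomial ring, since the product of the two polynomials has degree at most $p+(m-p)=m$ in $t$.

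It remains to show that these classes generate $H^*$ and that these relations are the only ones. Generation comes from pulling back from $\Gr_p(\R^\infty)=B\O_p$, whose cohomology is $\Z_2[w_1,\dots,w_p]$. For the absence of further relations, I would count dimensions using the Schubert cell structure: the total dimension is $\binom{m}{p}$, and this matches the dimension of the quotient ring, which is the standard presentation argument.

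I expect the main care to be needed in stating the Künneth theorem correctly. Its finiteness hypothesis must be checked, which is done via the Schubert CW structure, and it must be noted that when $\operatorname{char}\Bbbk\ne 2$ the tensor product carries the graded-commutative sign convention. The Grassmannian presentation itself I would simply cite rather than reprove.
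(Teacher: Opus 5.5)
Your proposal is correct and follows the same route as the paper. Pull back along the homotopy equivalence $\iota$ of Theorem~\ref{thm:sdr}, apply the K\"unneth theorem over $\Bbbk$ (the paper cites \cite[Theorem~3.15]{Hatcher}, whose finite-type hypothesis you rightly check via the Schubert cell structure), and cite Borel for \eqref{eq:cohom}. Your extra Milnor--Stasheff-style sketch of that presentation (Whitney relations, generation by the $w_i(\mathcal{T})$, and the $\binom{m}{p}$ dimension count) is sound, but it goes beyond the paper, which simply cites \cite{Borel1953}.
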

\begin{proof}
Pulling back along the inclusion $\iota$ in Theorem~\ref{thm:sdr}  induces an isomorphism of graded
$\Bbbk$-algebras
\[
\iota^*\colon
H^*\bigl(\Gr_{p,q}(\F^{m+n});\Bbbk\bigr)
\xrightarrow{\sim}
H^*\bigl(
\Gr_p(\F^m)\times\Gr_q(\F^n);
\Bbbk
\bigr),
\]
from which we obtain \eqref{eq:cohom1} with K\"unneth theorem
\cite[Theorem~3.15]{Hatcher}. See \cite{Borel1953} for the cohomology ring of $\Gr_p(\R^m)$ over $\Z_2$.
\end{proof}
Over $\C$ and $\H$, there are of course analogs of \eqref{eq:cohom} in terms of Chern and symplectic Pontryagin classes respectively; unfortunately, these have coefficient ring $\Z$ and \eqref{eq:cohom1} does not apply. We leave these and other more involved calculations for $H^*\bigl(\Gr_{p,q}(\F^{m+n});\Z\bigr)$ to interested readers. Instead here we will say more about the characteristic classes of the indefinite Grassmannian.
\begin{corollary}[characteristic classes]
Let $\iota$ and $\mathcal{T}$ be as in Theorem~\ref{thm:sdr} and Corollary~\ref{cor:cohom} respectively. Let $\mathcal{T}_p$ be the tautological bundle over $\Gr_p(\F^m)$. Then, for all $l=0,1,2,\dots$,
\[
\iota^* w_l(\mathcal{T}) = \sum_{i+j = l} w_i(\mathcal{T}_p)w_j(\mathcal{T}_q),
\]
where $w_i$ denotes Stiefel--Whitney, Chern, or symplectic Pontryagin class when $\F = \R$, $\C$, or $\H$ respectively.
\end{corollary}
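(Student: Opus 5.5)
Here $\mathcal{T}$ is the tautological $k$-plane bundle over $\Gr_{p,q}(\F^{m+n})$, i.e.\ the restriction of the tautological bundle of $\Gr_k(\F^d)$ to this open submanifold (Proposition~\ref{prop:open}), and $\mathcal{T}_q$ is the tautological bundle over $\Gr_q(\F^n)$. The plan is to identify the pullback bundle $\iota^*\mathcal{T}$ explicitly and then apply the Whitney sum formula. The classes themselves are the usual ones: Stiefel--Whitney classes for real bundles, Chern classes for complex bundles, and symplectic Pontryagin classes for quaternionic bundles.

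First, identify the pullback. Let $\mathrm{pr}_1$ and $\mathrm{pr}_2$ be the projections of $\Gr_p(\F^m)\times\Gr_q(\F^n)$ onto its factors, and regard $\F^d=\F^m\oplus\F^n$. The fiber of $\iota^*\mathcal{T}$ over $(\W_1,\W_2)$ is $\W_1\oplus\W_2\subseteq\F^m\oplus\F^n$. The map $(\W_1,\W_2,u,v)\mapsto(\W_1,\W_2,u+v)$ is a fiberwise linear isomorphism
\[
\mathrm{pr}_1^*\mathcal{T}_p\oplus\mathrm{pr}_2^*\mathcal{T}_q\to\iota^*\mathcal{T}.
\]
It is continuous, and in fact smooth, because in local frames it is given by block-diagonal matrices of frames. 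So it is an isomorphism of $\F$-vector bundles. The only care needed is over $\H$: all bundles must be right $\H$-bundles, and the isomorphism is $\H$-linear because scalars act on the right on each summand.

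Second, compute the classes. Naturality gives $\iota^*w(\mathcal{T})=w(\iota^*\mathcal{T})$. The Whitney product formula, which holds for all three theories, then gives
\[
w(\iota^*\mathcal{T})=w(\mathrm{pr}_1^*\mathcal{T}_p)\,w(\mathrm{pr}_2^*\mathcal{T}_q)=\mathrm{pr}_1^*w(\mathcal{T}_p)\smile\mathrm{pr}_2^*w(\mathcal{T}_q).
\]
Taking the homogeneous component of degree $l$ (degree $l$, $2l$ or $4l$ according to $\F$) yields the stated formula. The product $w_i(\mathcal{T}_p)w_j(\mathcal{T}_q)$ is to be read as the cross product $\mathrm{pr}_1^*w_i\smile\mathrm{pr}_2^*w_j$. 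Under the Künneth isomorphism of Corollary~\ref{cor:cohom} this is $w_i\otimes w_j$ when coefficients are in a field. For Chern and symplectic Pontryagin classes with integer coefficients, the cross-product reading makes sense without invoking Künneth.

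Finally, note that since $\iota$ is a homotopy equivalence (Theorem~\ref{thm:sdr}), $\iota^*$ is an isomorphism. The formula therefore determines $w(\mathcal{T})$ completely. The main obstacle is just the bookkeeping in the first step, namely checking that the direct-sum identification is a genuine bundle isomorphism compatible with the $\F$-structure in the quaternionic case. Everything after that is formal.
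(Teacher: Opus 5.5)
Your proposal is correct and takes the same route as the paper, whose proof just cites the Whitney product formula for Stiefel--Whitney, Chern, and symplectic Pontryagin classes; you also spell out the bundle isomorphism $\iota^*\mathcal{T}\cong \mathrm{pr}_1^*\mathcal{T}_p\oplus\mathrm{pr}_2^*\mathcal{T}_q$ and the use of naturality, both of which the paper leaves implicit. Your reading of $\mathcal{T}$ as the tautological $k$-plane bundle over $\Gr_{p,q}(\F^{m+n})$, rather than the bundle over $\Gr_p(\R^m)$ that Corollary~\ref{cor:cohom} literally refers to, and of $w_i(\mathcal{T}_p)w_j(\mathcal{T}_q)$ as a cross product, is the right way to make the statement meaningful.
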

\begin{proof}
The result follows from the Whintney product formula for Stiefel--Whitney, Chern, and symplectic Pontrjagin classes \cite[Section~9.7]{BH1958}.
\end{proof}

For a generalized Grassmannian in the sense of \cite{flag}, the orbits of a Borel subgroup give a stratification into Schubert cells. We do not have a stratification of this nature as the indefinite Grassmannian is not a generalized Grassmannian, as we mentioned in Section~\ref{sec:intro}. Nevertheless, as we will see next, the indefinite Grassmannians and the standard Grassmannian are related in a different stratification entirely distinct from that given by Schubert cells.

\section{Stratification of the Grassmannian by indefinite Grassmannians}\label{sec:strat}

By the same Witt's extension argument in Proposition~\ref{prop:hom}, the natural action of $\Iso_{m,n}(\F)$ on $\Gr_k(\F^d)$ yields a partition  into finitely many orbits:
\begin{equation}\label{eq:partition-grass}
    \Gr_k(\F^d) \;=\; \bigsqcup_{\substack{p+q+r=k \\ p+r \le m, \, q+r \le n}} \mathcal{O}_{(p,q,r)},
    \qquad
    \mathcal{O}_{(p,q,r)} \coloneqq  \bigl\{\W \in \Gr_k(\F^d) : \mathsf{s}(\omega_{m,n}\vert_\W) = (p,q,r) \bigr\}.
\end{equation}
In particular, Proposition~\ref{prop:hom} shows that
\begin{equation}\label{eq:partition-grass1}
    \mathcal{O}_{(p,q,0)} = \Gr_{p,q}(\F^{m+n}), \qquad p+q=k.
\end{equation}
The goal of this section is to establish that \eqref{eq:partition-grass} is a Whitney~(B) stratification of $\Gr_k(\F^d)$ and that the indefinite Grassmannians in  \eqref{eq:partition-grass1} are exactly the highest dimensional strata. We emphasize that this stratification is different from the standard stratification via group actions \cite[Chapter~4]{Pflaum} where the space is stratified by \emph{orbit types}, noting that the strata in \eqref{eq:partition-grass} are de facto \emph{orbits}. Moreover, the orbit-type stratification requires the group action to be proper, which is not the case here, as our stabilizer is noncompact  by Proposition~\ref{prop:hom} \cite[Theorem~4.2.4]{Pflaum}.

Let $\mathcal{M}$ be a smooth manifold and $Z \subseteq \mathcal{M}$ a locally closed subset. By a \emph{stratification}  of $Z$, we mean a locally finite partition $Z=\bigsqcup_{a\in A} S_a$ into nonempty, locally closed smooth submanifolds of $\mathcal{M}$, called \emph{strata}, satisfying the \emph{frontier condition}: For all $a,b\in A$, $ S_a\cap\overline{S_b}\ne\varnothing$ if and only if $S_a\subseteq\overline{S_b}$. Let $S_a\cap\overline{S_b}\ne\varnothing$ and $y\in S_a$. We say $(S_a, S_b)$ satisfies \emph{Whitney's Condition~(B)} at $y$ if the following holds: Let $(x_i)\subseteq S_b$ and $(y_i)\subseteq S_a$ with $x_i\to y$ and $y_i\to y$. If $\ell_i=\overline{x_i y_i}$ converge to a line $\ell$ in some local coordinate and $\mathbb{T}_{x_i}S_b$ converge to a subspace $\mathbb{U}$, then $\ell\subseteq\mathbb{U}$. A stratification is a \emph{Whitney (B) stratification} if every incident pair of strata satisfies Condition~(B) at every point of the smaller stratum. This is a stricter notion of stratification than many others: It is well-known that a Whitney~(B) stratification is a Whitney~(A) stratification and a Thom--Mather stratification \cite{Mather}. 

Let $\W$ be a $d$-dimensional vector space over $\F$ equipped with the Euclidean form $\langle\,\cdot,\cdot\, \rangle$. We define the following sets of Hermitian operators on $\W$:
\[
\mathsf{H}^2(\W) \coloneqq \{H\in\End(\W):H^*=H\}, \quad
\mathsf{H}^2(\W)^{(p,q,r)} \coloneqq
\{H\in \mathsf{H}^2(\W):\mathsf{s}(H)=(p,q,r)\}.
\]
It is well-known that the partition
\[
\mathsf{H}^2(\W)=\bigsqcup_{i=0}^d \,\{H\in \mathsf{H}^2(\W):\dim_{\F}\ker H=i\}
\]
is a Whitney stratification \cite{Arnold}; we will define a different, finer stratification by $\mathsf{H}^2(\W)^{(p,q,r)}$.

\begin{lemma}\label{lem:embed}
Let $\W$ be a $d$-dimensional vector space over $\F$. If $p$, $q$, $r\ge0$ such that $p+q+r=d$, then $\mathsf{H}^2(\W)^{(p,q,r)}$ is a  real-analytic embedded submanifold of $\mathsf{H}^2(\W)$, with 
\[
\codim_{\R} \mathsf{H}^2(\W)^{(p,q,r)}=
\begin{cases}
    r(r+1)/2 &\text{if } \F = \R,\\
    r^2 &\text{if } \F = \C,\\
    r(2r-1) &\text{if } \F = \H.
\end{cases}
\]
Moreover, the partition
\begin{equation}\label{eq:herm-stra}
\mathsf{H}^2(\W) =\bigsqcup_{p+q+r=d} \mathsf{H}^2(\W)^{(p,q,r)}
\end{equation}
is a Whitney~(B) stratification.
\end{lemma}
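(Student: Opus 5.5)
Each $\mathsf{H}^2(\W)^{(p,q,r)}$ is an orbit of the congruence action $H \mapsto SHS^*$ of $\GL(\W)$, and the plan is to treat it via this action. By Sylvester's law of inertia, $\mathsf{H}^2(\W)^{(p,q,r)} = \{S\,\diag(I_p,-I_q,0_r)\,S^* : S \in \GL_d(\F)\}$. The action is real algebraic, so each orbit is an immersed submanifold, and it is embedded because orbits of algebraic actions are locally closed. A more concrete alternative is to use local charts. At $H_0 = \diag(D,0_r)$ with $D = I_{p,q}$ invertible, consider a nearby $H = \begin{bsmallmatrix} A & B\\ B^* & C\end{bsmallmatrix}$ with $A$ invertible. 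The Schur complement gives
\[
H = \begin{bmatrix} I & 0\\ B^*A^{-1} & I\end{bmatrix}\begin{bmatrix} A & 0\\ 0 & C - B^*A^{-1}B\end{bmatrix}\begin{bmatrix} I & A^{-1}B\\ 0 & I\end{bmatrix}.
\]
Near $H_0$, $A$ has inertia $(p,q,0)$. So $\mathsf{s}(H) = (p,q,r)$ if and only if $\Sigma(H) \coloneqq C - B^*A^{-1}B = 0$. The map $\Sigma$ is real analytic into $\mathsf{H}^2(\F^r)$, and it is a submersion because $\partial\Sigma/\partial C$ is the identity. The regular level set theorem then shows the stratum is a real-analytic embedded submanifold, with codimension $\dim_\R \mathsf{H}^2(\F^r)$. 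This equals $r(r+1)/2$, $r^2$, or $r + 4\cdot r(r-1)/2 = r(2r-1)$ for $\R$, $\C$, $\H$ respectively. Equivariance under congruence transports the chart to every point of the stratum.

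Next come the stratification axioms. Local finiteness is clear, since there are only finitely many strata. For the frontier condition, I first claim that $\overline{\mathsf{H}^2(\W)^{(p,q,r)}} = \bigcup\{\mathsf{H}^2(\W)^{(p',q',r')} : p'\le p,\ q'\le q\}$. The inclusion ``$\subseteq$'' follows from lower semicontinuity of the numbers of positive and negative eigenvalues. For ``$\supseteq$'', perturb $\diag(I_{p'},-I_{q'},0)$ by $\epsilon\,\diag(0,0,I_{p-p'},-I_{q-q'},0_r)$ and let $\epsilon \to 0$. Since the closure is a union of whole strata, the frontier condition follows.

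The main obstacle is Whitney~(B). My plan is to reduce it to a product situation using the Schur-complement chart. The map $H \mapsto (A,B,\Sigma(H))$ is a real-analytic diffeomorphism near $H_0$, so it is a legitimate local coordinate, and Condition~(B) is invariant under $C^1$ diffeomorphisms. In these coordinates, a neighborhood of $H_0$ is (open set in $(A,B)$) $\times\, \mathsf{H}^2(\F^r)$. A point lies in the stratum $(p+p'',q+q'',r-p''-q'')$ exactly when its $\Sigma$-coordinate has inertia $(p'',q'',r-p''-q'')$. The stratification is therefore locally the product of a manifold with the stratification of $\mathsf{H}^2(\F^r)$ near $0$. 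Since (B) is preserved under products with a manifold, it suffices to check (B) for pairs $(\{0\}, S)$ in $\mathsf{H}^2(\F^r)$, where $S$ is any stratum; pairs with a smaller stratum other than $\{0\}$ reduce to this case by applying the same chart at a point of that stratum. Each stratum $S$ is a cone, being invariant under $H\mapsto tH$ for $t>0$, so its position vector $x_i$ lies in $\mathbb{T}_{x_i}S$. Take $y_i\to 0$ in $\{0\}$, so $y_i = 0$ and the secant line $\ell_i$ through $x_i$ and $y_i$ is spanned by $x_i$. Hence $\ell_i \subseteq \mathbb{T}_{x_i}S$, and passing to limits gives $\ell\subseteq\mathbb{U}$, which is Condition~(B). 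The delicate points to verify carefully are that the chart straightens all incident strata simultaneously, and that (B) transfers through the product and the diffeomorphism.
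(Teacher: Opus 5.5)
Your proposal is correct and follows essentially the same route as the paper. The Schur-complement chart $H\mapsto(A,B,C-B^*A^{-1}B)$ straightens every stratum near $H_0$ into a product $\mathcal{U}\times(\mathcal{V}\cap\mathsf{H}^2(\K)^{(a,b,c)})$, which gives the codimension $\dim_\R\mathsf{H}^2(\F^r)$, and Whitney~(B) then follows from this product structure together with the conical property $S\in\mathbb{T}_S\mathsf{H}^2(\K)^{(a,b,c)}$. The differences are cosmetic. You normalize $H_0$ by congruence and prove the frontier condition via semicontinuity of inertia plus an explicit diagonal perturbation, whereas the paper reads both directions off the additivity $\mathsf{s}(H)=(p,q,0)+\mathsf{s}(S(H))$ in the same chart.
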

\begin{proof}
Let $H_0\in \mathsf{H}^2(\W)^{(p,q,r)}$ and $\K \coloneqq \ker H_0$. Write $\K^\perp$ for its orthogonal complement with respect to the Euclidean form. Partition $H \in \mathsf{H}^2(\W)^{(p,q,r)}$ into
\[
    H=\begin{bmatrix}A&B\\ B^*&C\end{bmatrix}, \quad A \in \End(\K^\perp),\, B \in \Hom(\K, \K^\perp),\, C \in \End(\K).
\]
In particular, since $\langle H_0x,y\rangle=\langle x,H_0y\rangle=0$ for all $x \in \K^\perp$ and $y \in \K$, we have
\[
H_0=\begin{bmatrix}A_0& 0\\ 0 &0\end{bmatrix}, \quad A_0 \in\GL(\K^\perp),\quad \mathsf{s}(A_0) = \mathsf{s}(H_0) = (p,q,0).
\]
Consider the open neighborhood  of $ H_0$,
\[
\mathcal{I} \coloneqq \biggl\{\begin{bmatrix}A&B\\ B^*&C\end{bmatrix}\in \mathsf{H}^2(\W):\mathsf{s}(A)=\mathsf{s}(A_0)=(p,q,0)\biggr\}.
\]
For $H \in \mathcal{I}$, denote its Schur complement by $S(H) \coloneqq C - B^*A^{-1}B \in \mathsf{H}^2(\K)$. It is clear that
\[
\Theta:\mathcal{I} \to \mathsf{H}^2(\K^\perp)^{(p,q,0)}\times\Hom(\K,\K^\perp)\times\mathsf{H}^2(\K), \quad H \mapsto (A,B, S(H))
\]
is a real-analytic diffeomorphism onto its image with inverse
\[
\Theta^{-1}(A,B,S)=\begin{bmatrix}A&B\\B^*&S+B^*A^{-1}B\end{bmatrix}.
\]
In particular, $\Theta(H_0) = (A_0,0,0)$. Since
\[
\begin{bmatrix}I_{\K^\perp}&-A^{-1}B\\0&I_\K\end{bmatrix}^*
H
\begin{bmatrix}I_{\K^\perp}&-A^{-1}B\\0&I_\K\end{bmatrix}
=\begin{bmatrix}A&0\\0&S(H)\end{bmatrix},
\]
we get
\begin{equation}\label{eq:add}
    \mathsf{s}(H)=(p,q,0)+\mathsf{s}\bigl(S(H)\bigr)\quad\text{for all }H\in\mathcal{I}.
    \end{equation}
Therefore, for open sets $\mathcal{U},\mathcal{V}$ such that
\[
(A_0,0)\in\mathcal{U}\subset
\mathsf{H}^2(\K^\perp)^{(p,q,0)}\times\Hom(\K,\K^\perp)
\quad \text{and} \quad 0\in\mathcal{V}\subset\mathsf{H}^2(\K),
\]
we have  
\begin{equation}\label{eq:local-theta}
\Theta\bigl(\mathsf{H}^2(\W)^{(a+p,b+q,c)}\cap\Theta^{-1}(\mathcal{U}\times\mathcal{V})\bigr)
=\mathcal{U}\times\bigl(\mathcal{V}\cap\mathsf{H}^2(\K)^{(a,b,c)}\bigr),
\quad \text{for all } a+b+c = r.
\end{equation}
Choosing $(a,b,c)=(0,0,r)$ gives $\Theta\bigl(\mathsf{H}^2(\W)^{(p,q,r)}\cap\Theta^{-1}(\mathcal{U}\times\mathcal{V})\bigr)
=\mathcal{U}\times\{0\}$. Since $H_0$ is arbitrary, $\mathsf{H}^2(\W)^{(p,q,r)}$ is a real-analytic embedded submanifold of $\mathsf{H}^2(\W)$ and
\[
\codim_{\R}\mathsf{H}^2(
\W)^{(p,q,r)}
=\dim_{\R}\mathsf{H}^2(\K)=
\begin{cases}
    r(r+1)/2 &\text{if } \F = \R,\\
    r^2 &\text{if } \F = \C,\\
    r(2r-1) &\text{if } \F = \H.
\end{cases} 
\]
Next we check the two conditions required for a Whitney~(B) stratification.

For the frontier condition, let $\mathsf{H}^2(\W)^{(p',q',r')}$ be any stratum. We claim that
\begin{equation}
\label{eq:front-iff}
H_0 \in \overline{\mathsf{H}^2(\W)^{(p',q',r')}}  \quad \iff \quad p' \ge p \text{ and } q' \ge q.
\end{equation}
Suppose $H_0 \in \overline{\mathsf{H}^2(\W)^{(p',q',r')}}$. Choose $H_i \to H_0$ with $H_i \in \mathsf{H}^2(\W)^{(p',q',r')}$. So $H_i \in \mathcal{I}$ for all large $i$ as $\mathcal{I}$ is open. Thus \eqref{eq:add} gives
\[
(p',q',r') = \mathsf{s}(H_i) = (p,q,0)+\mathsf{s}\bigl(S(H_i)\bigr).
\]
As $\mathsf{s}(S(H_i))$ has all coordinates nonnegative, $p' \ge p$ and $q' \ge q$. Conversely, suppose $p' \ge p$ and $q' \ge q$. Since $p'+q'+r' = d = p+q+r$, we have $r' = r-(p'-p)-(q'-q) \ge 0$ and
\[
(p'-p) + (q'-q) + r' = r = \dim_{\F} \K.
\]
So there exists $T \in \mathsf{H}^2(\K)$ with $\mathsf{s}(T) = (p'-p,q'-q,r')$. For $t > 0$, set $H_t \coloneqq \Theta^{-1}(A_0,0,\,tT)$. Then $\Theta(H_t) = (A_0,0,tT) \to (A_0,0,0) = \Theta(H_0)$. So $H_t \to H_0$. Also, $\mathsf{s}(tT) = \mathsf{s}(T)$ and \eqref{eq:add} give
\[
\mathsf{s}(H_t) = (p,q,0) + (p'-p,q'-q,r') = (p',q',r').
\]
Thus $H_t \in \mathsf{H}^2(\W)^{(p',q',r')}$ and $H_0 \in \overline{\mathsf{H}^2(\W)^{(p',q',r')}}$.

Since the right-hand side of \eqref{eq:front-iff} does not depend on the choice of $H_0 \in \mathsf{H}^2(\W)^{(p,q,r)}$, either $\mathsf{H}^2(\W)^{(p,q,r)} \subseteq \overline{\mathsf{H}^2(\W)^{(p',q',r')}}$ or $\mathsf{H}^2(\W)^{(p,q,r)} \cap \overline{\mathsf{H}^2(\W)^{(p',q',r')}} = \varnothing$, which is the frontier condition.

It remains to check the Whitney~(B) condition. We need to verify that, for all nonnegative integers $a+b+c = r$, the incident pair $\bigl(\mathsf{H}^2(\W)^{(p,q,r)}, \mathsf{H}^2(\W)^{(p+a,q+b,c)} \bigr)$ satisfies Whitney's condition~(B) at $H_0$. Indeed, by \eqref{eq:local-theta}, we just need to verify this for the pair $\bigl(\mathcal{U}\times(\mathcal{V}\cap\mathsf{H}^2(\K)^{(a,b,c)}), \mathcal{U}\times\{0\}\bigr)$, as the condition is invariant under diffeomorphisms. Since $\{(1+t)S : t \ge 0\} \subseteq \mathsf{H}^2(\K)^{(a,b,c)}$ for all $S \in \mathsf{H}^2(\K)^{(a,b,c)}$,
\begin{equation}\label{eq:s-tan}
S = \frac{d}{dt}(1+t)S \Bigr\vert_{t=0} \in \mathbb{T}_S \mathsf{H}^2(\K)^{(a,b,c)}.
\end{equation}
Let $(A,B,0) \in \mathcal{U} \times \{0\}$. Let $\{(A_i,B_i,0)\}_{i=1}^\infty \subseteq \mathcal{U} \times \{0\}$ and $\{(C_i,D_i,S_i)\}_{i=1}^\infty \subseteq \mathcal{U} \times (\mathcal{V} \cap \mathsf{H}^2(\K)^{(a,b,c)})$ be two sequences approaching $(A,B,0)$. Suppose the line $\ell_i = \operatorname{span}_{\R}\{C_i - A_i, D_i - B_i, S_i \}$ converges to a line $\ell$ and $\mathbb{T}_{(C_i,D_i,S_i)}\bigl(\mathcal{U} \times (\mathcal{V} \cap \mathsf{H}^2(\K)^{(a,b,c)})\bigr)$ converges to a subspace $\mathbb{U}$. Since
\[
\mathbb{T}_{(C_i,D_i,S_i)}\bigl(\mathcal{U} \times (\mathcal{V} \cap \mathsf{H}^2(\K)^{(a,b,c)})\bigr) = \mathbb{T}_{(C_i,D_i)}\mathcal{U} \times \mathbb{T}_{S_i}\bigl(\mathcal{V} \cap \mathsf{H}^2(\K)^{(a,b,c)}\bigr),
\]
and $\mathbb{T}_{(C_i,D_i)}\mathcal{U} = \mathsf{H}^2(\K^\perp) \times \Hom(\K,\K^\perp)$ is the full space (as $\mathcal{U}$ is open), applying \eqref{eq:s-tan} gives $\ell_i = \operatorname{span}_{\R}\{C_i - A_i, D_i - B_i, S_i \} \subseteq \mathbb{T}_{(C_i,D_i,S_i)}\bigl(\mathcal{U} \times (\mathcal{V} \cap \mathsf{H}^2(\K)^{(a,b,c)})\bigr)$ for each $i$. Therefore $\ell \subseteq \mathbb{U}$ upon taking limits.
\end{proof}

We now prove the claimed stratification.
\begin{theorem}[Whitney stratification of $\Gr_k(\F^d)$ by $\Gr_{p,q}(\F^{m+n})$]\label{thm:whitney-grass}
For $\F=\R$, $\C$, or $\H$, the partition \eqref{eq:partition-grass} of $\Gr_k(\F^d)$ is a real-analytic Whitney~(B) stratification with
\[
\codim_\R \mathcal{O}_{(p,q,r)} =
\begin{cases}
    r(r+1)/2 & \text{if } \F = \R,\\
    r^2 & \text{if } \F = \C,\\
    r(2r-1) & \text{if } \F = \H.
\end{cases}
\]
\end{theorem}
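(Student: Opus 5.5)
The plan is to pull back the stratification of Lemma~\ref{lem:embed} along a submersion. Use the map $\Psi\colon \Gr_k(\F^d)\to\mathsf{H}^2(\F^d)$, $\W\mapsto P_\W I_{m,n}P_\W+(I-P_\W)$, from the proof of Proposition~\ref{prop:open}. That proof already shows $\mathsf{s}(\Psi(\W))=\mathsf{s}(\omega_{m,n}\vert_\W)+(d-k,0,0)$. Hence
\[
\mathcal{O}_{(p,q,r)}=\Psi^{-1}\bigl(\mathsf{H}^2(\F^d)^{(p+d-k,q,r)}\bigr).
\]
However, $\Psi$ lands in a space of dimension much larger than $\dim\Gr_k(\F^d)$, so it cannot be a submersion. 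I would therefore localize. Near a fixed $\W_0$, use the standard real-analytic chart $\F^{(d-k)\times k}\ni C\mapsto \W_C=\im\bigl[\begin{smallmatrix}I\\ C\end{smallmatrix}\bigr]$, written in a Euclidean orthonormal basis adapted to $\W_0\oplus\W_0^\perp$. Set $Y_C\coloneqq Q\bigl[\begin{smallmatrix}I\\ C\end{smallmatrix}\bigr]$, where $Q$ is that basis. Then consider the Gram map
\[
G\colon C\mapsto Y_C^*I_{m,n}Y_C\in\mathsf{H}^2(\F^k).
\]
By Sylvester's law, $\mathsf{s}(G(C))=\mathsf{s}(\omega\vert_{\W_C})$. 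So locally $\mathcal{O}_{(p,q,r)}=G^{-1}(\mathsf{H}^2(\F^k)^{(p,q,r)})$.

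The key step, and the one I expect to be the main obstacle, is to show that $G$ is a submersion at every point, or at least transverse to every stratum $\mathsf{H}^2(\F^k)^{(p,q,r)}$. The differential is $dG_C(E)=E^*M+M^*E$, where $M\coloneqq [0\;I]\,Q^*I_{m,n}Y_C\in\F^{(d-k)\times k}$. Surjectivity onto $\mathsf{H}^2(\F^k)$ amounts to $M$ having full column rank $k$, by the same trick as the Stiefel tangent computation with $E=\tfrac12 M(M^*M)^{-1}H$. Full column rank fails in general, for instance when $\W$ is totally isotropic with $k>d-k$, so a pure submersion argument may break down.

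The fallback is the following. Choose the chart basis adapted instead to $\W_0$ and an $\omega$-complement $\mathbb{U}$ of $\W_0$ with $I_{m,n}$ pairing $\mathbb{U}$ nondegenerately against the radical directions. Equivalently, use the Witt decomposition $\W_0=\W_0'\oplus\mathbb{K}$ with $\mathbb{K}$ the radical and $\mathbb{K}'$ a dual isotropic space. Then graph perturbations $\K\to\mathbb{K}'$ realize every Hermitian perturbation of the radical block, because $E^*+E$ is surjective onto $\mathsf{H}^2(\mathbb{K})$. Composed with the Schur-complement chart $\Theta$ of Lemma~\ref{lem:embed} at $G(0)$, this makes the map $C\mapsto S(G(C))\in\mathsf{H}^2(\mathbb{K})$ a submersion near $0$. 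The right setting is thus this composite: local coordinates on $\Gr_k(\F^d)$ splitting as (free coordinates) $\times\,\mathsf{H}^2(\mathbb{K})$, in which $\mathcal{O}_{(p+a,q+b,c)}$ corresponds to (open set) $\times\,\mathsf{H}^2(\mathbb{K})^{(a,b,c)}$, exactly as in \eqref{eq:local-theta}.

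Once that local product structure is in hand, everything transfers from Lemma~\ref{lem:embed}. Each $\mathcal{O}_{(p,q,r)}$ is a real-analytic embedded submanifold with codimension equal to $\dim_\R\mathsf{H}^2(\F^r)$, which gives the stated formulas. The frontier condition follows from \eqref{eq:front-iff}: $\mathcal{O}_{(p,q,r)}\subseteq\overline{\mathcal{O}_{(p',q',r')}}$ iff $p'\ge p$ and $q'\ge q$. Whitney~(B) holds because it is local, invariant under diffeomorphism, preserved under products with an open set, and holds for the conic strata $\mathsf{H}^2(\mathbb{K})^{(a,b,c)}$ by \eqref{eq:s-tan}. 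Local finiteness is automatic, since there are finitely many orbits. Consistently, the open strata $r=0$ are the indefinite Grassmannians by \eqref{eq:partition-grass1} and Proposition~\ref{prop:open}.
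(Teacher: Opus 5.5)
Your proposal is correct and is essentially the paper's proof. Both use a graph chart at $\W_0$ and the Gram map of $\omega$ on $\W_0$, show that its Schur complement onto the radical block $\mathsf{H}^2(\K)$ is a submersion at $0$, and then pull back the stratification of Lemma~\ref{lem:embed}; the paper cites the Goresky--MacPherson fact that Whitney~(B) pulls back under submersions instead of building your product chart explicitly.

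Your switch to a Witt-adapted complement is unnecessary. In the Euclidean chart $\Hom(\W_0,\W_0^\perp)$, the space $I_{m,n}\K$ already lies in $\W_0^\perp$ and pairs nondegenerately with $\K$, which is exactly what the paper's choice $X_T x=\tfrac12 I_{m,n}Tx$ on $\K$ exploits. Also, your example of a totally isotropic $\W$ with $k>d-k$ cannot occur, since totally isotropic subspaces have $k\le\min\{m,n\}\le d/2$.
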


\begin{proof}
Write $\omega\coloneqq \omega_{m,n}$. Let $\W \in \mathcal{O}_{(p,q,r)}$, so $\mathsf{s}(\omega\vert_{\W}) = (p,q,r)$. Let $\K \coloneqq \ker(\omega\vert_{\W})$, so $\dim_{\F}\K = r$. Let $\E \coloneqq \W \cap \K^{\perp}$, so $\omega \vert_{\E}$ is nondegenerate.  Define $\Phi \colon \Hom(\W, \W^{\perp}) \to \mathsf{H}^2(\W)$ by
\[
\langle x, \Phi(L)y \rangle = \omega\bigl((I+L)x, (I+L)y\bigr), \quad x,y \in \W,
\]
and write it in block form with respect to $\W = \E \oplus \K$,
\[
\Phi(L) = \begin{bmatrix} A(L) & B(L) \\ B(L)^* & C(L) \end{bmatrix},
\quad A(L) \in \mathsf{H}^2(\E),\ B(L) \in \Hom(\K, \E),\ C(L) \in \mathsf{H}^2(\K).
\]
Since $\langle x, \Phi(0)y\rangle = \omega(x,y)$ for $x,y \in \W$, we have $\mathsf{s}(\Phi(0)) = \mathsf{s}(\omega\vert_{\W}) = (p,q,r)$. As $\K = \ker(\omega\vert_{\W})$, this forces $B(0) = 0$, $C(0) = 0$, and $A(0) \in \GL(\E)$ with $\mathsf{s}(A(0)) = (p,q,0)$. By restricting to a sufficiently small neighborhood $\mathcal{I} \ni 0$, we may assume that $A(L) \in \GL(\E)$ with constant inertia $\mathsf{s}(A(L)) = (p,q,0)$ for all $L \in \mathcal{I}$.

The Schur complement of $\Phi(L)$ defines a map
\begin{equation}\label{eq:psi}
    \Psi \colon \mathcal{I} \to \mathsf{H}^2(\K), \quad L \mapsto C(L) - B(L)^* A(L)^{-1} B(L),
\end{equation}
so that
\begin{equation}\label{eq:shur-gr}
\begin{bmatrix} I_{\E} & -A(L)^{-1}B(L) \\ 0 & I_{\K} \end{bmatrix}^{ *}
\Phi(L)
\begin{bmatrix} I_{\E} & -A(L)^{-1}B(L) \\ 0 & I_{\K} \end{bmatrix}
= \begin{bmatrix} A(L) & 0 \\ 0 & \Psi(L) \end{bmatrix}.
\end{equation}
We claim that $\Psi$ is a submersion at $0 \in \Hom(\W, \W^{\perp})$. By definition,
\[
d\Psi = dC - (dB)^*A^{-1}B - B^*A^{-1}\,dB + B^*A^{-1}(dA)A^{-1}B.
\]
Since $B(0) = 0$, we have $d\Psi \vert_0 = dC\vert_0$. For $X\in\Hom(\W,\W^\perp)$ and $x,y\in\K$,
\begin{equation}\label{eq:diff}
\langle x,((d\Psi\vert_0)X)y\rangle
=
\tfrac{d}{dt}\big\vert_{t=0}
\omega\bigl((I+tX)x,(I+tX)y\bigr)
=
\omega(Xx,y)+\omega(x,Xy).
\end{equation}
Given $T \in \mathsf{H}^2(\K)$, consider the operator $X_T \in \Hom(\W, \W^{\perp})$ defined by $X_T x = 0$ for $x \in \E$ and $X_T x = \tfrac12 B_\omega T x$ for $x \in \K$, where $B_\omega$ is the unique operator satisfying $\omega = \langle \, \cdot \,, B_\omega \,\cdot \,\rangle$. It is well-defined since for all $w \in \W$ and $x \in \K$, $\langle w, X_T x\rangle = \tfrac12\langle w, B_\omega Tx\rangle = \tfrac12\omega(w,Tx) = 0$, so $X_T x \in \W^\perp$. For all $x$, $y \in \K$,
\[
\omega(X_T x, y) + \omega(x, X_T y) = \tfrac12\langle Tx, y\rangle + \tfrac12\langle x, Ty\rangle = \langle x, Ty\rangle.
\]
Therefore, $ d\Psi\vert_0 X_T = T$, i.e., $d \Psi \vert_0$ is surjective. For $\mathcal{I}$ sufficiently small, $\Psi$ is a submersion on $\mathcal{I}$.

Applying Lemma~\ref{lem:embed} to $\K$ gives the Whitney~(B) stratification
\[
\mathsf{H}^2(\K) = \bigsqcup_{a+b+c=r} \mathsf{H}^2(\K)^{(a,b,c)}.
\]
Since the preimage of a Whitney~(B) stratification under a submersion is again a Whitney~(B) stratification \cite[Section~1.3]{GM1988}, the partition
\[
\mathcal{I} = \bigsqcup_{a+b+c=r} \Psi^{-1}\bigl(\mathsf{H}^2(\K)^{(a,b,c)}\bigr)
\]
is a real-analytic Whitney~(B) stratification. Now the map
\begin{equation}\label{eq:chi}
    \chi \colon \Hom(\W, \W^{\perp}) \to \Gr_k(\F^d), \qquad L \mapsto (I+L)\W,
\end{equation}
defines a chart of the Grassmannian on a sufficiently small neighborhood of $0$. By \eqref{eq:shur-gr} and Sylvester's law of inertia,
\[
\mathsf{s}(\omega\vert_{\chi(L)}) = \mathsf{s}( \Phi(L)) = \mathsf{s}(A(L)) + \mathsf{s}(\Psi(L)) = (p,q,0) + \mathsf{s}(\Psi(L)).
\]
So $\chi(L) \in \mathcal{O}_{(p+a,q+b,c)}$ if and only if $\Psi(L) \in \mathsf{H}^2(\K)^{(a,b,c)}$. Therefore,
\begin{equation}\label{eq:local}
    \chi^{-1}\bigl(\chi(\mathcal{I}) \cap \mathcal{O}_{(p+a,q+b,c)}\bigr) = \Psi^{-1}\bigl(\mathsf{H}^2(\K)^{(a,b,c)}\bigr),
\qquad a+b+c = r.
\end{equation}
As $\chi$ is a real-analytic diffeomorphism onto $\chi(\mathcal{I})$ and Whitney~(B) is a diffeomorphism invariant, the partition
\[
\chi(\mathcal{I}) = \bigsqcup_{a+b+c = r} \bigl(\chi(\mathcal{I}) \cap \mathcal{O}_{(p+a,q+b,c)}\bigr)
\]
is a real-analytic Whitney~(B) stratification. This verifies Whitney~(B) condition.

It remains to verify the frontier condition. Let $\W \in \mathcal{O}_{(p,q,r)}$. Since $\chi(\mathcal{I})$ is an open neighborhood of $\W$ in $\Gr_k(\F^d)$, $\W \in \overline{\mathcal{O}_{(p',q',r')}}$ if and only if $\W \in \overline{\mathcal{O}_{(p',q',r')} \cap \chi(\mathcal{I})}$.
By Lemma~\ref{lem:embed}, the condition is equivalent to $p' \geq p$ and $q' \geq q$ and hence
\[
\mathcal{O}_{(p,q,r)} \cap \overline{\mathcal{O}_{(p',q',r')}} \ne \varnothing
\quad \Longrightarrow \quad
\mathcal{O}_{(p,q,r)} \subseteq \overline{\mathcal{O}_{(p',q',r')}}.
\]
The reverse implication is immediate since each stratum is nonempty, and we obtain the frontier condition.

Taking $(a,b,c) = (0,0,r)$ in the correspondence $\chi^{-1}\bigl(\chi(\mathcal{I}) \cap \mathcal{O}_{(p+a,q+b,c)}\bigr) = \Psi^{-1}\bigl(\mathsf{H}^2(\K)^{(a,b,c)}\bigr)$, we obtain $\chi^{-1}\bigl(\chi(\mathcal{I}) \cap \mathcal{O}_{(p,q,r)}\bigr) = \Psi^{-1}(0)$. Since $\Psi$ is a submersion and $\chi$ a real-analytic diffeomorphism onto the open set $\chi(\mathcal{I}) \subseteq \Gr_k(\F^d)$, we have 
$\codim_\R \mathcal{O}_{(p,q,r)} = \dim_\R \mathsf{H}^2(\K) = \dim_\R \mathsf{H}^2(\F^r)$.
\end{proof}

\subsection{Normal bundle on strata}

As we just saw, the highest-dimensional strata in the stratification \eqref{eq:partition-grass} of the Grassmannian are all indefinite Grassmannians  \eqref{eq:partition-grass1}. Here we will say a few words about the lower-dimensional strata, which may be viewed as ``degenerate Grassmannians'' and could be interesting in their own right.

These lower-dimensional strata $\mathcal{O}_{(p,q,r)}$ are closely related to the vanishing of $\omega$. Since they naturally occur as embedded submanifolds of $\Gr_k(\F^d)$, it is fitting to begin by looking at their normal bundles. We will write
\[
\pi_{\mathcal{T}_k}\colon \mathcal{T}_k
\coloneqq
\bigl\{(\W,x)\in\Gr_k(\F^d)\times\F^d:x\in\W\bigr\}
\to\Gr_k(\F^d),
\quad
(\W,x)\mapsto\W
\]
for the tautological rank-$k$ $\F$-vector bundle over $\Gr_k(\F^d)$. 
For nonnegative integers $p,q,r$ indexing a nonempty stratum $\mathcal{O}_{(p,q,r)}$, we define
\begin{equation}\label{eq:radical-bundle}
\mathcal{K}_{(p,q,r)}
\coloneqq
\bigl\{(\W,x)\in \mathcal{T}_k\vert_{\mathcal{O}_{(p,q,r)}}:
\omega_{m,n}(x,y)=0\text{ for all }y\in\W\bigr\}.
\end{equation}
Thus the fiber of \eqref{eq:radical-bundle} over $\W\in\mathcal{O}_{(p,q,r)}$ is
\[
(\mathcal{K}_{(p,q,r)})_\W
=
\ker(\omega_{m,n}\vert_\W)
=
\W\cap\W^{\perp_{\omega_{m,n}}}.
\]
If $\pi_{\mathcal{E}}\colon\mathcal{E}\to X$ is a Euclidean $\F$-vector bundle, we write
$\mathcal{E}_x\coloneqq\pi_{\mathcal{E}}^{-1}(x)$ and define
\[
\pi_{\mathsf{H}^2(\mathcal{E})}\colon H^2(\mathcal{E})
\coloneqq
\bigl\{(x,H):x\in X,\ H\in\mathsf{H}^2(\mathcal{E}_x)\bigr\}
\to X,
\quad
(x,H)\mapsto x.
\]
For $r=\rank_\F\mathcal{E}$, Lemma~\ref{lem:embed} gives
\[
\rank_\R\mathsf{H}^2(\mathcal{E})
=
\begin{cases}
 r(r+1)/2 &\text{if }\F=\R,\\
 r^2 &\text{if }\F=\C,\\
 r(2r-1) &\text{if }\F=\H,
\end{cases}
\]
which, unsurprisingly, vanishes when $r = 0$. So the results in this section are only nontrivial for the lower-dimensional strata.

\begin{proposition}\label{prop:radical-bundle}
For every nonempty stratum $\mathcal{O}_{(p,q,r)}$, the projection
\[
\mathcal{K}_{(p,q,r)}\to\mathcal{O}_{(p,q,r)},
\quad
(\W,x)\mapsto \W,
\]
is a smooth rank-$r$ $\F$-vector subbundle of $\mathcal{T}_k\vert_{\mathcal{O}_{(p,q,r)}}$.
\end{proposition}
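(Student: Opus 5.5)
The plan is to realize $\mathcal{K}_{(p,q,r)}$ as the kernel of a smooth, constant-rank bundle endomorphism of $\mathcal{T}_k\vert_{\mathcal{O}_{(p,q,r)}}$, and then invoke the standard fact that the kernel of a constant-rank morphism of smooth vector bundles is a smooth subbundle.

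By Theorem~\ref{thm:whitney-grass}, $\mathcal{O}_{(p,q,r)}$ is an embedded real-analytic submanifold of $\Gr_k(\F^d)$. Hence $\mathcal{T}_k\vert_{\mathcal{O}_{(p,q,r)}}$ is a smooth rank-$k$ $\F$-vector bundle, with fiber $\W$ over $\W$.

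The morphism comes from the orthogonal projector model. In the proof of Proposition~\ref{prop:proj}, $\W\mapsto P_\W$ is real analytic. Define
\[
F\colon \mathcal{T}_k\vert_{\mathcal{O}_{(p,q,r)}}\to\mathcal{T}_k\vert_{\mathcal{O}_{(p,q,r)}},\qquad (\W,x)\mapsto (\W,\,P_\W I_{m,n}x).
\]
This map is smooth, $\F$-linear on fibers, and maps $\W$ into $\W$. For $x\in\W$ we have $P_\W I_{m,n}x=0$ if and only if $\langle y,I_{m,n}x\rangle=0$ for all $y\in\W$, that is, if and only if $\omega_{m,n}(y,x)=0$ for all $y\in\W$. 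So $\ker F_\W=\ker(\omega_{m,n}\vert_\W)=(\mathcal{K}_{(p,q,r)})_\W$.

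The rank of $F_\W$ is constant on the stratum. With an orthonormal basis of $\W$ as in Proposition~\ref{prop:proj}, $F_\W$ is represented by the Hermitian matrix $B_{11}$, the Gram matrix of $\omega_{m,n}\vert_\W$. Its inertia is $(p,q,r)$ throughout $\mathcal{O}_{(p,q,r)}$, so $\dim\ker F_\W=r$ for every $\W$ in the stratum.

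The main point to handle carefully is that the kernel of a constant-rank smooth bundle map is a smooth subbundle. I would prove this locally rather than just cite it. Near $\W_0$, choose a smooth local frame of $\mathcal{T}_k$, for example by projecting a fixed basis of $\W_0$ onto $\W$ via $P_\W$. In this frame $F$ becomes a smooth family of $k\times k$ matrices $M(\W)$ of constant rank $k-r$. After permuting the frame, assume the top-left $(k-r)\times(k-r)$ block of $M(\W_0)$ is invertible; it then stays invertible nearby. The standard Schur-complement construction, mirroring \eqref{eq:shur-gr}, gives $r$ smooth, pointwise independent kernel vectors: write $M=\begin{bsmallmatrix}A&B\\C&D\end{bsmallmatrix}$ and take the columns of $\begin{bsmallmatrix}-A^{-1}B\\ I_r\end{bsmallmatrix}$. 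These span $\ker M$ because the rank is exactly $k-r$, which forces $D=CA^{-1}B$.

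Over $\F=\H$ everything is done with right $\H$-linear maps, and the same argument goes through. Transporting these vectors back through the frame yields a smooth local frame of $\mathcal{K}_{(p,q,r)}$. This gives local triviality and shows $\mathcal{K}_{(p,q,r)}$ is a smooth rank-$r$ $\F$-subbundle of $\mathcal{T}_k\vert_{\mathcal{O}_{(p,q,r)}}$.
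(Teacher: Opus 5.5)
Your proposal is correct and follows essentially the same route as the paper. Both use the bundle endomorphism $x\mapsto P_\W I_{m,n}x$ of $\mathcal{T}_k\vert_{\mathcal{O}_{(p,q,r)}}$, whose kernel is the radical and whose rank is constantly $k-r$ on the stratum, and then a local Schur-complement (graph) description of that kernel. The differences are cosmetic: the paper trivializes $\mathcal{T}_k$ through the chart $\chi(L)=(I+L)\W_0$ and splits $\W_0=\E_0\oplus\K_0$, which makes $M_{11}(0)$ invertible automatically, whereas you project a basis of $\W_0$ and permute. If you take that basis orthonormal, $M(\W_0)$ is the Hermitian matrix $B_{11}$, which guarantees a nonsingular principal block of size $k-r$ under a simultaneous permutation.
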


\begin{proof}
For $\W\in\Gr_k(\F^d)$, consider the map $A_\W:\W\to \W$, $x \mapsto P_\W I_{m,n}x$, which defines a smooth bundle endomorphism $\mathcal{T}_k\vert_{\mathcal{O}_{(p,q,r)}} \to \mathcal{T}_k\vert_{\mathcal{O}_{(p,q,r)}}$.
For $x,y\in\W$,
$\langle y,A_\W x\rangle
=
\langle y,I_{m,n}x\rangle
=
\omega_{m,n}(y,x)$. 
Since $\omega_{m,n}$ is Hermitian, $A_\W x=0$ if and only if $\omega_{m,n}(x,y)=0$ for every $y\in\W$. Hence
\[
\ker A_\W
=
\ker(\omega_{m,n}\vert_\W)
=
(\mathcal{K}_{(p,q,r)})_\W.
\]
For every $\W\in\mathcal{O}_{(p,q,r)}$, $\rank (A_\W) = k-r$. If $r=k$, then $A_\W=0$ for every $\W\in\mathcal{O}_{(p,q,r)}$ and $\mathcal{K}_{(p,q,r)}=\mathcal{T}_k\vert_{\mathcal{O}_{(p,q,r)}}$, and we are done.

So suppose $r<k$. Let $\W_0\in\mathcal{O}_{(p,q,r)}$, $\K_0\coloneqq\ker A_{\W_0}$, and $\E_0\coloneqq\K_0^\perp\cap\W_0$. For $x,y\in\W_0$,
\[
\langle x,A_{\W_0}y\rangle
=
\langle x,P_{\W_0}I_{m,n}y\rangle
=
\langle x,I_{m,n}y\rangle
=
\langle P_{\W_0}I_{m,n}x,y\rangle
=
\langle A_{\W_0}x,y\rangle,
\]
so $A_{\W_0}$ is Hermitian. Consequently, $\im(A_{\W_0}) = \ker(A_{\W_0})^\perp = \E_0$. 
It follows that $A_{\W_0}\vert_{\E_0}\colon\E_0\to \E_0$ is a vector space isomorphism and
\[
\dim_\F\E_0=k-r,
\qquad
\dim_\F\K_0=r,
\qquad
\W_0=\E_0\oplus\K_0.
\]

The restriction of the tautological bundle to $\chi(\mathcal{I})$ admits
the smooth trivialization
\[
\Phi\colon\mathcal{T}_k\vert_{\chi(\mathcal{I})}
\to
\chi(\mathcal{I})\times\W_0, \quad 
\bigl(\chi(L),(I+L)x\bigr)
\mapsto
\bigl(\chi(L),x\bigr).
\]
For $L\in\mathcal{I}$, we define $M(L)\in\End(\W_0)$ by
\begin{equation}\label{eq:local-matrix-radical}
A_{\chi(L)}\bigl((I+L)x\bigr)
=
(I+L)M(L)x,
\qquad x\in\W_0.
\end{equation}
Since $P_{\chi(L)} = (I+L)(I+L^*L)^{-1}(I+L)^*$ and $P_{\W_0}(I+L)=I_{\W_0}$,
\[
M(L)
=
(I+L^*L)^{-1}(I+L)^*I_{m,n}(I+L).
\]
Consequently, $M\colon\mathcal{I}\to\End(\W_0)$ is real-analytic and $M(0)=A_{\W_0}$. Now partition $M(L)$ in block form with respect to
$\W_0=\E_0\oplus\K_0$:
\[
M(L)
=
\begin{bmatrix}
M_{11}(L)&M_{12}(L)\\
M_{21}(L)&M_{22}(L)
\end{bmatrix},
\]
with $M_{11}(L)\in\End(\E_0)$, $M_{12}(L)\in\Hom(\K_0,\E_0)$, $M_{21}(L)\in\Hom(\E_0,\K_0)$, and $M_{22}(L)\in\End(\K_0)$. 
Since $A_{\W_0}\vert_{\E_0}$ is an isomorphism and
$A_{\W_0}\vert_{\K_0}=0$, we have
\[
M_{11}(0)=A_{\W_0}\vert_{\E_0}\in\GL(\E_0),
\qquad
M_{12}(0)=M_{21}(0)=M_{22}(0)=0.
\]
Since $\GL(\E_0)$ is open in $\End(\E_0)$, $\mathcal{I}'
\coloneqq
\bigl\{L\in\mathcal{I}:M_{11}(L)\in\GL(\E_0)\bigr\}$ is an open neighborhood of $0$. Restricting $\mathcal{I}$ to $\mathcal{I}'$,
we may assume that $M_{11}(L)$ is invertible for every $L\in\mathcal{I}$.

Let $\mathcal{U}\coloneqq
\chi(\mathcal{I})\cap\mathcal{O}_{(p,q,r)}$. For every $L\in\chi^{-1}(U)$,
\eqref{eq:local-matrix-radical} gives that $ \rank_\F M(L) = \rank_\F A_{\chi(L)} = k-r$. By the same Schur complement calculation used in earlier proofs,
\[
S(L)
\coloneqq
M_{22}(L)-M_{21}(L)M_{11}(L)^{-1}M_{12}(L)= 0
\in\End(\K_0)  \quad \text{for all } L\in\chi^{-1}(\mathcal{U}).
\]
Therefore
\[
\ker M(L)
=
\biggl\{
\begin{bmatrix}
-M_{11}(L)^{-1}M_{12}(L)z\\
z
\end{bmatrix}
:z\in\K_0
\biggr\} \cong \K_0
\]
and thus
\[
\Theta\colon \mathcal{U}\times\K_0 \to
\mathcal{K}_{(p,q,r)}\vert_{\mathcal{U}}, \quad
\bigl(\chi(L),z\bigr) \mapsto
\biggl(\chi(L),(I+L)\begin{bmatrix}
-M_{11}(L)^{-1}M_{12}(L)z\\
z
\end{bmatrix} \biggr)
\]
is well-defined and clearly smooth. For every $L\in\chi^{-1}(\mathcal{U})$, the fiber map $\restr{\Theta}{\{\chi(L)\}\times\K_0}
\colon
\K_0\to
(\mathcal{K}_{(p,q,r)})_{\chi(L)}$ is an $\F$-linear isomorphism with inverse
\[
\Theta^{-1}
\left(
\chi(L),
(I+L)
\begin{bmatrix}u\\z\end{bmatrix}
\right)
=
\bigl(\chi(L),z\bigr),
\qquad
\begin{bmatrix}u\\z\end{bmatrix}\in\ker M(L),
\]
and is clearly smooth. This completes the proof as $\W_0$ is arbitrary.
\end{proof}

Recall that for an embedded smooth submanifold $S\subseteq\mathcal{M}$, its normal bundle is the real vector bundle
\[
\nu(S)
\coloneqq
\mathbb{T}\mathcal{M}\vert_S/\mathbb{T}S.
\]
We now determine the normal bundle of every stratum in \eqref{eq:partition-grass}, with $\mathcal{K}_{(p,q,r)}$ as defined in \eqref{eq:radical-bundle}.

\begin{theorem}[Normal bundle]
\label{thm:normal-bundle}
There is a short exact
sequence of real vector bundles over $\mathcal{O}_{(p,q,r)}$,
\begin{equation}\label{eq:normal-exact}
0\to \mathbb{T} \mathcal{O}_{(p,q,r)}
\to \mathbb{T}\Gr_k(\F^d)\vert_S
\xrightarrow{\mathcal{D}}
\mathsf{H}^2( \mathcal{K}_{(p,q,r)})
\to0.
\end{equation}
Consequently,
\[
\nu (\mathcal{O}_{(p,q,r)})
\cong
\mathsf{H}^2(\mathcal{K}_{(p,q,r)}).
\]
\end{theorem}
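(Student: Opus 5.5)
The plan is to define the bundle map $\mathcal{D}$ intrinsically as the derivative of the restricted form along the radical, and then check exactness fiberwise using the computations already made in the proof of Theorem~\ref{thm:whitney-grass}. Write $S = \mathcal{O}_{(p,q,r)}$ and $\omega = \omega_{m,n}$. Use the standard identification $\mathbb{T}_\W\Gr_k(\F^d) \cong \Hom(\W,\W^\perp)$ via the chart $\chi$ in \eqref{eq:chi}. For $X \in \Hom(\W,\W^\perp)$ and $x,y \in \K_\W \coloneqq \ker(\omega\vert_\W) = (\mathcal{K}_{(p,q,r)})_\W$, set
\[
\langle x, \mathcal{D}_\W(X) y\rangle \coloneqq \omega(Xx,y) + \omega(x,Xy).
\]
This is the restriction to $\K_\W$ of the derivative of $L \mapsto \Phi(L)$, exactly as in \eqref{eq:diff}. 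The right-hand side is Hermitian in $(x,y)$, so $\mathcal{D}_\W(X) \in \mathsf{H}^2(\K_\W)$.

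Smoothness of $\mathcal{D}$ as a bundle map $\mathbb{T}\Gr_k(\F^d)\vert_S \to \mathsf{H}^2(\mathcal{K}_{(p,q,r)})$ follows from Proposition~\ref{prop:radical-bundle}. That result gives a smooth local frame of $\mathcal{K}_{(p,q,r)}$ through the trivialization $\Theta$. In such a frame the formula for $\mathcal{D}$ is polynomial in $X$ and the frame vectors, and the tangent bundle of the Grassmannian is trivialized over $\chi(\mathcal{I})$ by the chart.

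Exactness is then pointwise and is inherited from the proof of Theorem~\ref{thm:whitney-grass}. Fix $\W \in S$ and take the chart $\chi$ and the Schur map $\Psi$ from \eqref{eq:psi}. There we showed $d\Psi\vert_0 = dC\vert_0$, which is precisely $\mathcal{D}_\W$ by \eqref{eq:diff}. We also showed that $d\Psi\vert_0$ is surjective, using the explicit preimage $X_T$. Finally, \eqref{eq:local} with $(a,b,c) = (0,0,r)$ gives $\chi^{-1}(\chi(\mathcal{I}) \cap S) = \Psi^{-1}(0)$. Since $\Psi$ is a submersion, $\mathbb{T}_0\Psi^{-1}(0) = \ker d\Psi\vert_0$. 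Pushing forward by $d\chi_0$, which is the identity under our identification, gives $\mathbb{T}_\W S = \ker \mathcal{D}_\W$. This proves exactness of \eqref{eq:normal-exact} at every fiber. A fiberwise exact sequence of smooth bundle maps of constant rank is an exact sequence of vector bundles. The isomorphism $\nu(S) = \mathbb{T}\Gr_k(\F^d)\vert_S/\mathbb{T}S \cong \mathsf{H}^2(\mathcal{K}_{(p,q,r)})$ is then induced by $\mathcal{D}$.

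The main obstacle is consistency between the intrinsic definition of $\mathcal{D}$ and the chart-dependent $\Psi$. The chart $\chi$ is centered at $\W$, but the bundle map must be defined at every point of $S$ without reference to a chart. The identification $\mathbb{T}_\W\Gr_k \cong \Hom(\W,\W^\perp)$ is canonical, being the differential of $\chi$ at $0$ for the chart centered at $\W$. Also, $\mathcal{D}_\W$ depends only on $\omega$, $\W$ and $\K_\W$. So the remaining check is that the fiberwise maps assemble smoothly. I would verify this by expressing $\mathcal{D}$ in the frame $\Theta$, writing tangent vectors at $\chi(L)$ as $\Hom(\chi(L),\chi(L)^\perp)$ transported by $(I+L)$, and observing that the resulting matrix entries are real-analytic in $L$.
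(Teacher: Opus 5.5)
Your proposal is correct and follows the paper's proof essentially step for step. The paper defines $\mathcal{D}_\W(X) \coloneqq P_\K(X^*I_{m,n}+I_{m,n}X)\vert_\K$, which is exactly your operator. It identifies this map with $d\Psi\vert_0$ via \eqref{eq:diff}, and obtains both surjectivity and $\mathbb{T}_\W\mathcal{O}_{(p,q,r)} = \ker\mathcal{D}_\W$ from $\chi^{-1}(\chi(\mathcal{I})\cap\mathcal{O}_{(p,q,r)}) = \Psi^{-1}(0)$ with $\Psi$ a submersion.

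The only cosmetic difference is in proving smoothness of $\mathcal{D}$. The paper writes $P_\K = \sum_i v_iv_i^*$ for a local orthonormal frame of $\mathcal{K}_{(p,q,r)}$, rather than working in the $\Theta$-frame. This spares the Gram-matrix inversion that your non-orthonormal frame would need to pass from the Hermitian form to the Hermitian operator.
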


\begin{proof}
Let $\W\in \mathcal{O}_{(p,q,r)}$ and $\K\coloneqq\ker(\omega_{m,n}\vert_\W)$. We identify $\mathbb{T}_\W\Gr_k(\F^d)\cong\Hom(\W,\W^\perp)$
via $L\mapsto(I+L)\W$. Set $\mathcal{D}_\W(X)
\coloneqq
P_\K\bigl(X^*I_{m,n}+I_{m,n}X\bigr)\vert_\K
\in\mathsf{H}^2(\K)$. Then
\[
\langle x,\mathcal{D}_\W(X)y\rangle
=
\langle x,X^*I_{m,n}y\rangle
+\langle x,I_{m,n}Xy\rangle =
\omega_{m,n}(Xx,y)+\omega_{m,n}(x,Xy)
\]
for all $x,y\in\K$.
By \eqref{eq:diff}, $\mathcal{D}_\W = d\Psi \vert_0$. We next check that $\mathcal{D}_\W$ induces a smooth vector-bundle morphism. By Proposition~\ref{prop:radical-bundle}, $\mathcal{K}_{(p,q,r)}$ is a smooth subbundle of $\mathcal{T}_k\vert_{\mathcal{O}_{(p,q,r)}}$. If $v_1,\dots,v_r$ is a local smooth orthonormal frame of $\mathcal{K}_{(p,q,r)}$, then $P_\K=\sum_{i=1}^r v_i v_i^*$, so $\W\mapsto P_\K$ is smooth. Since $\W\mapsto P_\W$ is real-analytic, $\mathcal{D}_\W(X)$ depends smoothly on $\W$ and real-linearly on $X$. Hence $\mathcal{D}_\W$ induces a smooth real vector-bundle morphism $\mathcal{D}:
\mathbb{T}\Gr_k(\F^d)\vert_{\mathcal{O}_{(p,q,r)}}
\to
\mathsf{H}^2(\mathcal{K}_{(p,q,r)})$.

Fix $\W\in \mathcal{O}_{(p,q,r)}$. Let $\chi$ and $\Psi$ be defined by \eqref{eq:chi} and \eqref{eq:psi} respectively. By shrinking $\mathcal{I}$ if necessary, $\chi^{-1}\bigl(\chi(\mathcal{I})\cap S\bigr)=\Psi^{-1}(0)$
by \eqref{eq:local}. Since $\Psi$ is a submersion, $\mathcal{D}_\W$ is surjective. Therefore,
\[
(d\chi\vert_0)^{-1}(\mathcal{T}_\W \mathcal{O}_{(p,q,r)})
=
\mathcal{T}_0\Psi^{-1}(0)
=
\ker(d\Psi\vert_0)
=
\ker\mathcal{D}_\W .
\]
As $\W\in \mathcal{O}_{(p,q,r)}$ is arbitrary, \eqref{eq:normal-exact} is exact on
every fiber and is therefore a short exact sequence of smooth real
vector bundles.
\end{proof}

\section{Conclusion}

This is likely the first systematic study of the  indefinite Grassmannian. We show that it is an interesting object both geometrically and topologically, not least because of their relations, both geometric and topological, to the standard Grassmannian. Given that this is a first foray, it is unsurprising that there are many open questions (e.g., the integral cohomology left unaddressed Corollary~\ref{cor:cohom}) and future directions (e.g., the indefinite analogs of the symplectic twistor spaces in \cite{symplectic}), which we hope would generate exciting further works for interested readers and ourselves.

\subsection*{Acknowledgments} This work is partially supported by the Vannevar Bush Faculty Fellowship ONR N000142312863. 

\bibliographystyle{abbrv}

\end{document}